\newcommand{\N}{\mathbb{N}}
\newcommand{\Z}{\mathbb{Z}}
\newcommand{\R}{\mathbb{R}}
\newcommand{\C}{\mathbb{C}}
\newcommand{\Eh}{\mathcal{E}^{\{M_p\},h}}
\newcommand{\Ek}{\mathcal{E}^{\{M_p\},k}}
\newcommand{\El}{\mathcal{E}^{\{M_p\},l}}
\newcommand{\Dh}{\mathcal{D}^{\{M_p\},h}}
\newcommand{\Df}{\mathcal{D}^{(M_p)}}
\newcommand{\Ef}{\mathcal{E}^{(M_p)}}
\newcommand{\De}{\mathcal{D}^{\{M_p\}}}
\newcommand{\Ee}{\mathcal{E}^{\{M_p\}}}
\newcommand{\Ds}{\mathcal{D}^{\ast}}
\newcommand{\Es}{\mathcal{E}^{\ast}}
\newcommand{\Alg}{\mathcal{A}^{\ast,\dagger}}
\newcommand{\Nf}{{\mathcal{E}^{(M_p)}_{\mathcal{N}}}}
\newcommand{\Ne}{{\mathcal{E}^{\{M_p\}}_{\mathcal{N}}}}
\newcommand{\Ns}{{\mathcal{E}^{\ast}_{\mathcal{N}}}}
\newcommand{\Efc}{{\mathcal{E}^{(M_p)}_{\mathcal{M}}}}
\newcommand{\Eec}{{\mathcal{E}^{\{M_p\}}_{\mathcal{M}}}}
\newcommand{\Esc}{{\mathcal{E}^{\ast}_{\mathcal{M}}}}
\newcommand{\Gss}{\mathcal{G}^{\ast}}
\newcommand{\Gsi}{\mathcal{G}^{\ast,\infty}}
\newcommand{\dxi}{{\rm d}\xi }
\newcommand{\dt}{{\rm d}t }
\newcommand{\deta}{{\rm d}\eta }
\newcommand{\Ft}{\mathcal{F}}
\newtheorem{theorem}{Theorem}[section]
\newtheorem{proposition}[theorem]{Proposition}
\newtheorem{lemma}[theorem]{Lemma}
\newtheorem{corollary}[theorem]{Corollary}
\theoremstyle{definition}
\theoremstyle{remark}
\newtheorem{remark}[theorem]{Remark}
\numberwithin{equation}{section}
\begin{document}
\title[Embeddings of ultradistributions into algebras]{Optimal embeddings of ultradistributions into differential algebras}

\author[A. Debrouwere]{Andreas Debrouwere}
\thanks{A.\ Debrouwere gratefully acknowledges support by Ghent University, through a BOF Ph.D.-grant.}
\address{Department of Mathematics, Ghent University, Krijgslaan 281, 9000 Gent, Belgium}
\email{Andreas.Debrouwere@UGent.be}

\author[H. Vernaeve]{Hans Vernaeve}
\thanks{H.\ Vernaeve is supported by grant 1.5.138.13N of the Research Foundation Flanders FWO}
\address{Department of Mathematics, Ghent University, Krijgslaan 281, 9000 Gent, Belgium}
\email{hvernaev@cage.UGent.be}

\author[J. Vindas]{Jasson Vindas}
\thanks{J. Vindas gratefully acknowledges support by Ghent University, through the BOF-grant 01N01014.}
\address{Department of Mathematics, Ghent University, Krijgslaan 281, 9000 Gent, Belgium}
\email{Jasson.Vindas@UGent.be}

\subjclass[2010]{Primary 46F05, 46F30. Secondary 35A18.}
\keywords{Generalized functions, Colombeau algebras, multiplication of ultradistributions, wave front sets, ultradifferentiable functions, Denjoy-Carleman classes}

\begin{abstract}
We construct embeddings of spaces of non-quasianalytic ultradistributions into differential algebras enjoying optimal properties in view of a Schwartz type impossibility result, also shown in this article. We develop microlocal analysis in theses algebras consistent with the  microlocal analysis in the corresponding spaces of ultradistributions.
\end{abstract}

\maketitle

%
%
%

\section{Introduction}
Differential algebras of generalized functions containing the space of Schwartz distributions have been constructed starting with the work of J.F. Colombeau \cite{Colombeau84,Colombeau85} (see also the recent work \cite{a-j-c}). They provide a framework in which nonlinear equations and equations with strongly singular data or coefficients can be solved and in which their regularity can be analyzed. The theory has found diverse applications in the study of partial differential equations \cite{Hormann-DeHoop,Hormann-Ober-Pilipovic,Nedel,Ober}, variational problems \cite{garetto-vernaeve}, differential geometry \cite{Kunzinger-Steinbauer}, and relativity theory (see \cite{GGK} and the references therein). 

The aim of this paper is to further develop the nonlinear theory of generalized functions in the context of ultradistributions. The need for a nonlinear theory of ultradistributions naturally occurs e.g.\ if one considers hyperbolic systems with rapidly growing nonlinear terms \cite{Gramchev}, or in recent studies on well-posedness of weakly hyperbolic equations with time dependent nonregular coefficients \cite{garetto2015,garetto-r2015}. We construct in this article new embeddings of ultradistributions into differential algebras enjoying optimal properties in the sense we now proceed to explain. Attempts to solve this problem have been made by many authors, but none of the so far proposed embeddings enjoys the optimal properties we will establish here.  

According to Schwartz' impossibility result \cite{GGK,Schwartz}  there does not exist an associative, commutative differential algebra, containing the space of distributions, in which the multiplication of $C^{k}$-functions ($k<\infty$) coincides with the ordinary multiplication. However, in the algebra constructed by J.F. Colombeau, the multiplication of  $C^{\infty}$-functions, the natural class used to define the Schwartz distributions, is preserved. The latter fact is crucial in order to develop regularity and microlocal analysis in this nonlinear context \cite{Dapic,Hormann-Ober-Pilipovic,Ober}. Colombeau's embedding of distributions is then optimal in the sense of the Schwartz impossibility result.
 Over the last years many authors have defined differential algebras containing spaces of (non-quasianalytic) ultradistributions \cite{Benmeriem2,Delcroix2004,Delcroix,Gramchev,PilScar}. However, in any of the algebras constructed so far, containing the space of  $M_p$-ultradistributions \cite{Komatsu}, there has always been the rather unnatural restriction that the multiplication is preserved only for some ultradifferentiable functions of a certain strictly more regular Denjoy-Carleman class than $M_p$. Developing a non\-linear theory for non-quasianalytic ultradistributions that avoids this loss of regularity phenomenon has up to now been an open problem.

In this article we resolve this problem. More concretely, we show that it is possible to embed the spaces of $M_p$-ultradistributions into differentiable algebras in such a way that the multiplication of all $M_p$-ultradifferentiable functions is preserved and, moreover, by establishing an analogue of Schwartz' impossibility result for ultradistributions, we show that our embedding is optimal. It turns out that our ideas also apply to develop a similar theory for $\omega$-ultradistributions, that is, ultradistributions defined via weight functions \cite{Bjorck,b-m-t,Cior}. We treat both the Beurling and Roumieu cases of weight sequences and weight functions.

Furthermore, we introduce a notion of regularity in our algebras of generalized functions which coincides with ultradifferentiability when restricted to ultradistributions, and we develop microlocal analysis in our algebras, similar to the one developed in Colombeau's algebra \cite{Dapic, Nedel}, which has served as a framework for the study of the propagation of singularities under the action of differential operators with singular coefficients (see e.g. \cite{Hormann-Ober-Pilipovic}).

This paper is organized as follows. Sections \ref{preli}--\ref{section wavefronts}  deal with the case of generalized functions defined via weight sequences; in Section \ref{section weight functions} we explain the necessary modifications to achieve similar results for weight functions.  In the preliminary Section \ref{preli} we fix the notation and explain some of the spaces of ultradifferentiable functions and ultradistributions to be considered in this work. The analogue of Schwartz' impossibility result for ultradistributions is stated and proved in Section \ref{imposs section}. The construction of our differential algebras of generalized functions as well as some of their basic sheaf-theoretic properties are discussed in Section \ref{section algebra}. We provide there a null characterization of the spaces of negligible nets, which we apply to obtain pointwise characterizations of our generalized functions. We mention that in the Roumieu case the definition of our algebras differs considerably from those previously proposed in the literature (cf. Remark \ref{def Roumieu}); the projective description of our algebras of Roumieu type obtained in Subsection \ref{subsection projective} may serve to explain that our new definition is natural (see \cite{Komatsu3, Pil94} for analogues in the theory of ultradistributions). 
The embedding of ultradistributions into our algebras is accomplished in Section \ref{section-embedding}, where we also show that the multiplication of $M_p$-ultradifferentiable functions is preserved under this embedding. The next two sections are devoted to local and microlocal analysis in our newly defined algebras. In Section \ref{section regularity}, we study regularity in our setting and show that an $M_p$-ultradistribution is a regular generalized function if and only if it is an $M_p$-ultradifferentiable function. Based upon this notion of regularity we introduce in Section \ref{section wavefronts} generalized wave front sets for our generalized functions and show that this microlocal notion is consistent with the one for ultradistributions  \cite{Komatsumla, Pilipovic}.
\section{Preliminaries}\label{preli}
In this section we fix the notation and briefly explain some properties of spaces of ultradifferentiable functions and ultradistributions defined via weight sequences \cite{Komatsu,Roumieu,Carmichael}. These spaces will be employed throughout Sections  \ref{imposs section}--\ref{section wavefronts} of the article; we will discuss the Beurling-Bj\"{o}rck approach \cite{Bjorck} to ultradistribution theory via weight functions in Section \ref{section weight functions}.  
We fix a positive weight sequence $(M_p)_{p \in \N}$ with $M_0 = 1$. We shall always assume that $M_p$ satisfies:
\begin{enumerate}
\item[$(M.1)\:$] $M_p^2 \leq M_{p-1}M_{p+1}$, $p \in \Z_+$,
\item[$(M.2)\:$] $M_{p+q} \leq AH^{p+q}M_pM_q$, $p,q \in \N$, for some $A,H \geq 1$,
\item[$(M.3)'$] $\displaystyle \sum_{p = 1}^\infty \frac{M_{p-1}}{M_p} < \infty$.
\end{enumerate}
The \emph{associated function} of $M_p$ is defined as
\[M(t) = \sup_{p \in \N} \log \left(\frac{t^p}{M_p}\right), \qquad t \geq 0.\]
We refer to \cite{Komatsu} for the meaning of these three conditions and their translation in terms of the associated function. In particular, under $(M.1)$, the assumption $(M.2)$ holds \cite[Prop 3.6]{Komatsu} if and only if
\begin{equation}
2M(t) \leq M(Ht) + \log A, \qquad \forall t \geq 0,
\label{assM.2}
\end{equation}
where $A,H$ are the constants witnessing $(M.2)$, while $(M.3)'$ becomes equivalent \cite[Lemm. 4.1]{Komatsu} to
\begin{equation}
\int_1^\infty \frac{M(t)}{t^2}\dt < \infty.
\label{assM.3}
\end{equation}
As usual, the relation $M_p\subset N_p$  between two weight sequences means that there are $C,l>0$ such that 
$M_p\leq Cl^{p}N_{p},$ $p\in\mathbb{N}$, while the stronger relation $M_p\prec N_p$ means that the latter inequality remains valid for every $l>0$ and suitable $C=C_{l}>0$. 

Let $\Omega \subseteq \R^d$ be open. For $K \Subset \Omega$ (a compact subset of $\Omega$·) and $h > 0$ the space $\Eh(K)$ consists of all $\phi \in C^\infty(\Omega)$ such that
\[ \| \phi \|_{\Eh(K)} := \sup_{\substack{x \in K \\ \alpha \in \N^d}} \frac{|\phi^{(\alpha)}(x)|}{h^{|\alpha|}M_{|\alpha|}} < \infty. \]
We define the spaces of $M_p$-ultradifferentiable functions as
\[ \Ef(\Omega) = \varprojlim_{K \Subset \Omega} \varprojlim_{h \rightarrow 0} \Eh(K), \quad \Ee(\Omega) = \varprojlim_{K \Subset \Omega} \varinjlim_{h \rightarrow \infty} \Eh(K).  \]
The subspace of $\Eh(K)$ consisting of elements with supports in $K$ is denoted as $\Dh(K)$ and we set 
\[ \Df(K) =\varprojlim_{h \rightarrow 0} \Dh(K), \qquad \De(K) = \varinjlim_{h \rightarrow \infty} \Dh(K),  \]
and
\[ \Df(\Omega) = \varinjlim_{K \Subset \Omega}  \Df(K), \qquad \De(\Omega) = \varinjlim_{K \Subset \Omega} \De(K).  \]
Naturally, the non-triviality of these spaces of compactly supported functions is equivalent to $(M.3)'$. 
Their duals ${\Df}'(\Omega)$ and ${\De}'(\Omega)$ are the ultradistribution spaces of class $(M_p)$  (Beurling type) and class $\{M_p\}$ (Roumieu type), respectively. The dual spaces ${\Ef}'(\Omega)$ and ${\Ee}'(\Omega)$ correspond to the compactly supported ultradistributions. As customary, one employs the notation $\ast= (M_p)$ or $\{M_p\}$ to treat both cases simultaneously. In addition, we shall often first state assertions for the Beurling case followed in parenthesis by the corresponding statements for the Roumieu case.

An $(M_p)$-ultradifferential operator ($\{M_p\}$-ultradifferential operator) is an infinite order differential operator
\[ P(D) = \sum_{\alpha \in \N^d} a_\alpha D^\alpha, \qquad a_\alpha \in \C,\] 
($D^{\alpha}=(-i \partial)^{\alpha}$) where the coefficients satisfy the estimate 
\[|a_\alpha| \leq \frac{CL^{|\alpha|}}{M_{|\alpha|} }\]
for some $L > 0$ and $C > 0$ (for every $L>0$ there is $C=C_{L} > 0$).
Condition $(M.2)$ ensures that $P(D)$ acts continuously on $\Ds(\Omega)$ and $\Es(\Omega)$ and hence it can be defined on the corresponding ultradistribution spaces. The symbol $P(\xi) = \sum_{\alpha \in \N^d} a_\alpha \xi^\alpha$ is of course an entire function, it is called an $\ast$-ultrapolynomial.
Let $q$ be a polynomial; one has the following version of Leibniz' rule,
\begin{equation}\label{equltrapolpol} P(D)( q f ) = \sum_{\beta \leq \deg q} \frac{1}{\beta!}D^{\beta}q \cdot [(D^\beta P)(D)f], \qquad \forall f \in {\Ds}'(\Omega). 
\end{equation}
We fix constants in the Fourier transform as
\[ \Ft(f)(\xi) = \widehat{f}(\xi) = \int_{\R^d} f(x)e^{ix\cdot \xi} dx. \]
\section{ Impossibility result on the multiplication of ultradistributions}\label{imposs section}
In this section we show an analogue of Schwartz' famous impossibility result \cite{Schwartz} in the setting of ultradistributions. Recall that in the case of Schwartz distributions this result asserts that it is impossible to embed $\mathcal{D}'(\Omega)$ into an associative and commutative differential algebra preserving differentiation, the unity function (= constant function 1), and at the same time the (pointwise) multiplication of continuous functions -- or more generally $C^{k}$-functions for any $k\in\mathbb{N}$, see \cite[p. 7]{GGK}. The role of the continuous functions in our impossibility result is played by a space $\mathcal{E}^{\dagger}(\Omega)$ with slightly less regularity than $\mathcal{E}^{\ast}(\Omega)$ (recall $\ast$ always denotes the Beurling or Roumieu case of $M_p$). We assume in the rest of this section that $N_p$ is a weight sequence that satisfies $(M.1)$ and stability under differential operators \cite{Komatsu}, namely, the condition
\begin{itemize}
 \item[$(M.2)'$] $N_{p+1} \leq AH^pN_p$, $p \in \N$, for some $A,H \geq 1$.
\end{itemize}
We set $\dagger=(N_p)$ or $\{N_p\}$. When embedding ${\Ds}'(\Omega)$ into some associative and commutative algebra $(\mathcal{A}^{\ast,\dagger}(\Omega), +, \circ)$, the following requirements appear to be natural:
\begin{enumerate}
\item[$(P.1)$] ${\Ds}'(\Omega)$ is linearly embedded into $\Alg(\Omega)$ and $f(x) \equiv 1$ is the unity in $\Alg(\Omega)$.
\item[$(P.2)$] For each $\ast$-ultradifferential operator $P(D)$ there is a linear operator $P(D):  \Alg(\Omega) \rightarrow \Alg(\Omega)$ satisfying (cf. (\ref{equltrapolpol}))
 \[ 
 P(D)( q \circ f ) = \sum_{\beta \leq \deg q} \frac{1}{\beta!} D^\beta q \circ (D^\beta P)(D)f, \qquad \forall f \in \Alg(\Omega), 
 \]
and every polynomial $q$. Moreover, $P(D)_{|{\Ds}'(\Omega)}$ coincides with the usual action of $P(D)$ on $\ast$-ultradistributions.
\item[$(P.3)$] $\circ_{|\mathcal{E}^\dagger(\Omega) \times \mathcal{E}^\dagger(\Omega)}$ coincides with the pointwise product of functions.
\end{enumerate}
\smallskip

The ensuing result imposes a limitation on the possibility of constructing such an algebra.

\begin{theorem} \label{impossibility} Let $M_p \prec N_p$. Then, there is no associative and commutative algebra $\mathcal{A}^{\ast,\dagger}(\Omega)$ satisfying $(P.1)$--$(P.3)$. Moreover, in the Beurling case of $\ast$, there does not exist an algebra  $\mathcal{A}^{(M_p),\{M_p\}}(\Omega)$ either satisfying $(P.1)$--$(P.3)$.
\end{theorem}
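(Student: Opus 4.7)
My approach mirrors Schwartz's classical impossibility argument. The key observation is that $(P.2)$, combined with $(P.1)$ and $(P.3)$, should force the product in $\Alg(\Omega)$ of any $\phi\in\mathcal{E}^\dagger(\Omega)$ with an $\ast$-ultradistribution supported at a single point $x_0$ to be completely determined by the Taylor coefficients of $\phi$ at $x_0$. Picking $\phi\in\mathcal{E}^\dagger\setminus\mathcal{E}^\ast$ (available since $M_p\prec N_p$) and a carefully chosen $\ast$-ultradifferential operator $P(D)$, and taking $T=P(D)\delta_{x_0}$, will then force an infinite series to converge in ${\Ds}'(\Omega)$, which fails because the derivatives of $\phi$ at $x_0$ outgrow the $M_p$-rate inherent to $\ast$-ultradifferential operator symbols.

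\emph{Step 1 (polynomial compatibility).} I first establish that for every polynomial $q$ and every $T\in{\Es}'(\Omega)$, $q\circ T=qT$ in $\Alg$, where $qT$ is the usual smooth-times-ultradistribution product. The proof is by induction on $\deg q$: the case $\deg q=0$ is $(P.1)$, and the inductive step uses the finite Leibniz identity $D_j(q\circ f)=q\circ D_j f + c\,D^{e_j}q\circ f$ (for an explicit constant $c$ depending on the $D=-i\partial$ convention) extracted from $(P.2)$ with $P(D)=D_j$, combined with $(P.3)$ to anchor products of polynomials with $\mathcal{E}^\dagger$-functions.

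\emph{Step 2 (forcing a Taylor-series identity).} Fix $\phi\in\mathcal{E}^\dagger(\Omega)$ and an $\ast$-ultradifferential operator $P(D)$. Apply $(P.2)$ with $q=(x-x_0)^\alpha$ and $f=\delta_{x_0}$, and iterate using Step 1 to identify $(x-x_0)^{\alpha-\beta}\circ(D^\beta P)(D)\delta_{x_0}$ with the ordinary distributional product, which via the classical identity $(x-x_0)^\gamma\cdot(D^\beta P)(D)\delta_{x_0}\propto(D^{\beta+\gamma}P)(D)\delta_{x_0}$ simplifies in terms of derivatives of $\delta_{x_0}$. Summing over $|\alpha|\le N$ with weights $D^\alpha\phi(x_0)/\alpha!$ and letting $N\to\infty$ in the $\mathcal{E}^\dagger$-topology, one obtains in $\Alg$ the identity
\begin{equation*}
\phi\circ P(D)\delta_{x_0}=\sum_{\alpha\in\N^d}\frac{(-1)^{|\alpha|}}{\alpha!}D^\alpha\phi(x_0)\,(D^\alpha P)(D)\delta_{x_0},
\end{equation*}
where the right-hand series is required to converge in ${\Ds}'(\Omega)\subseteq\Alg(\Omega)$.

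\emph{Step 3 (contradiction).} Using $M_p\prec N_p$, a standard Denjoy--Carleman construction (via the associated functions of $M_p$ and $N_p$) produces $\phi\in\mathcal{D}^\dagger(\Omega)$ and $x_0$ with $|D^\alpha\phi(x_0)|/(l^{|\alpha|}M_{|\alpha|})\to\infty$ for every $l>0$ in the Beurling case of $\ast$ (for some $l>0$ in the Roumieu case). Choose an $\ast$-ultradifferential operator $P(D)$ whose coefficients $a_\alpha$ saturate the $M_p$-bound, and pair the series above with a test function $\psi\in\mathcal{D}^\ast(\Omega)$ equal to $1$ near $x_0$. The resulting numerical series has absolute terms bounded below by a constant multiple of $|D^\alpha\phi(x_0)|/M_{|\alpha|}$, hence diverges, contradicting the required convergence in ${\Ds}'(\Omega)$. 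The Beurling-case addendum $\ast=(M_p)$, $\dagger=\{M_p\}$ follows identically using $\mathcal{E}^{(M_p)}\subsetneq\mathcal{E}^{\{M_p\}}$ to produce the analogous $\phi$. I expect the main obstacle to be Step 1, where the polynomial-Leibniz rule yields only a recursive identity, not directly $q\circ T=qT$; the inductive argument must carefully combine $(P.3)$ with the embedding to pin down this identity at each degree. A closely related delicate point in Step 2 is the passage from the finite Taylor polynomial to the full series, which must be justified by the fact that each $q_N\circ P(D)\delta_{x_0}$ lies in ${\Ds}'(\Omega)$ with explicit expression forced by Step 1, so convergence is genuinely required in the ultradistribution topology.
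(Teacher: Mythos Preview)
Your Step 2 contains a genuine gap that cannot be repaired. The identity
\[
\phi\circ P(D)\delta_{x_0}=\sum_{\alpha}\frac{(-1)^{|\alpha|}}{\alpha!}D^\alpha\phi(x_0)\,(D^\alpha P)(D)\delta_{x_0}
\]
is simply not forced by $(P.1)$--$(P.3)$. The axiom $(P.2)$ only gives Leibniz rules for \emph{polynomial} multipliers, and there is no way to pass from the Taylor polynomials $q_N$ of $\phi$ to $\phi$ itself: the algebra $\Alg(\Omega)$ carries no topology, so ``letting $N\to\infty$'' is meaningless there; and even in $\mathcal{E}^\dagger(\Omega)$ the Taylor polynomials of a non-quasianalytic function do \emph{not} converge to the function (this is essentially the content of Denjoy--Carleman/Borel). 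Nor can you argue via a remainder, since $P(D)\delta_{x_0}$ has infinite order, so $(x-x_0)^\gamma\cdot P(D)\delta_{x_0}\neq 0$ for every $\gamma$. Consequently the divergence in Step 3 contradicts nothing: $\phi\circ P(D)\delta_{x_0}$ is just some element of the abstract algebra, with no a priori link to that series.

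Your Step 1 also does not go through as written. The Leibniz rule with $P(D)=D_j$ only yields the recursion $D_j(q\circ T-qT)=q\circ D_jT-q\,D_jT$, which does not pin down $q\circ T$. The correct induction (the one the paper uses) proceeds over $\deg q$ while varying the operator: for $g\in\mathcal{E}^\dagger$ one has $q\circ g=qg$ by $(P.3)$, hence $P(D)(q\circ g)=P(D)(qg)$, and comparing the two Leibniz expansions (from $(P.2)$ and from \eqref{equltrapolpol}) plus the inductive hypothesis on $D^\beta q$ gives $q\circ P(D)g=q\,P(D)g$. This only covers ultradistributions of the form $P(D)g$ with $g\in\mathcal{E}^\dagger$; extending it to arbitrary $T\in{\Es}'(\Omega)$ requires the structural theorem, which you do not invoke.

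The paper's proof avoids both pitfalls by never attempting to compute $\phi\circ T$ for a genuine $\phi\in\mathcal{E}^\dagger$. It stays entirely with the single polynomial $q(x)=x_1\cdots x_d$, writes the Heaviside function $H$ and $\operatorname{p.v.}(x^{-1})$ (via the structural theorem) as finite sums of $P(D)g$ with $g\in\mathcal{E}^\dagger$, and then uses only the identity $q\circ P(D)g=q\,P(D)g$ established above to conclude $q\circ\boldsymbol{\partial}H=0$ and $q\circ\operatorname{p.v.}(x^{-1})=1$, whence $\boldsymbol{\partial}H=0$---a contradiction. The structural theorem, not any growth analysis of a specific $\phi$, is where the hypothesis $M_p\prec N_p$ (resp.\ $\ast=(M_p)$, $\dagger=\{M_p\}$) enters.
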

\begin{proof} Suppose $\mathcal{A}^{\ast,\dagger}(\Omega)$ is such an algebra (we treat all cases simultaneously). One can generalize Schwartz' original idea by making use of the following observation: Given a $\ast$-ultradifferential operator $P(D)$, $g \in  \mathcal{E}^\dagger(\Omega)$, and a polynomial $q$, one has that $q \circ P(D)g =  q P(D)g$ -- which can be readily shown by induction on the degree of $q$. Assume for simplicity that $0 \in \Omega$. Write $H(x)= H(x_1, \ldots, x_d): =  H(x_1) \otimes \cdots \otimes H(x_d)$, where $H(x_j)$ denotes the Heaviside function, that is, the characteristic function of the positive half-axis, and  $\operatorname*{p.v.}(x^{-1}) :=  \operatorname*{p.v.}(x_1^{-1}) \otimes \cdots \otimes  \operatorname*{p.v.}(x_d^{-1})$, where $\operatorname*{p.v.}(x_j^{-1})$ stands for the principle value regularization of the function $x_j^{-1}$.  Let $f_j$ denote either $H(x_j)$ or $\operatorname{p.v. }(x_j^{-1})$ and  let $\Omega_j$ be the projection of $\Omega$ onto the $x_j$-axis. Note that we can obviously decompose $f_{j}$ as $f_j = f_{j,1}+ g_{j,2}$, where $f^1_{j} \in {\Es}'(\Omega_j)$ and $g_{j,2}  \in \Es(\Omega_j)$. By employing the structural theorem for ultradistributions \cite{Taki}, we can find $g_{j,1} \in \mathcal{E}^\dagger(\Omega_j)$ and a $\ast$-ultradifferential operator $P_{j,1}(D_j)$ such that $P_{j,1}(D_j)g_{j,1} = f_{j,1}$, so that $f_{j}=P_{j,1}(D_j)g_{j,1}+ P_{j,2}(D_j)g_{j,2}$, where $P_{j,2}(D_j)$ is the trivial ultradifferential operator $P_{j,2}(D_j)= 1$. We conclude that $H$ and $\operatorname*{p.v.}(x^{-1})$  are linear combinations of ultradistributions of the form
 \[P_{1,k_1}(D_1)g_{1,k_1} \otimes \cdots  \otimes P_{d,k_d}(D_d)g_d^{k_d} = P_{1,k_1}(D_1) \cdots P_{d,k_d}(D_d) (g_1^{k_1} \otimes \cdots  \otimes g_d^{k_d}),\]
where $k_j \in \{1,2\}$.   Condition $(M.2)$ ensures the class of $\ast$-ultradifferential operators is closed under composition and therefore $H$ and $\operatorname{p.v. }(x^{-1})$ are linear combinations of terms of the form $P(D)g$, where $P(D)$ is a $\ast$-ultradifferential operator and $g \in \mathcal{E}^\dagger(\Omega)$.  Set
\[
\boldsymbol{\partial} =  \frac{\partial^d}{\partial x_1 \cdots \partial x_d} \quad \mbox{and} \quad q(x)=x_1x_2\cdots x_d\:;
\]
the observation made at beginning of the proof now yields $q\circ \boldsymbol{\partial} H = (q \boldsymbol\partial H)$ and $q \circ \operatorname{p.v. }(x^{-1}) = q \operatorname{p.v. }(x^{-1})$. Since $q \boldsymbol{\partial} H = 0$ and $ q \operatorname{p.v. }(x^{-1}) = 1$ in  ${\Ds}'(\Omega)$, we obtain
\[ 
 \boldsymbol{\partial} H= \boldsymbol{\partial} H \circ ( q \circ \operatorname*{p.v.}(x^{-1})) =  (  \boldsymbol{\partial} H \circ  q) \circ \operatorname*{p.v.}(x^{-1}) = 0,
\]
contradicting $\boldsymbol{\partial} H = \delta \neq 0$ in ${\Ds}'(\Omega)$ and the injectivity of ${\Ds}'(\Omega)\to \Alg(\Omega)$.
\end{proof}
Despite this impossibility theorem, we shall construct an algebra $\Gss(\Omega)$ that does satisfy the desirable properties $(P.1)$--$(P.3)$ for $\dagger = \ast$, and our embedding of ${\Ds}'(\Omega)$ will therefore be optimal in this sense.

\section{Algebras of generalized functions of class $(M_p)$ and $\{M_p\}$}\label{section algebra}
We now introduce algebras  $\mathcal{G}^{\ast}(\Omega)$ of generalized functions of class $\ast$, where $\ast=(M_p)$ or $\{M_p\}$, as quotients of algebras consisting of nets of ultradifferentiable functions, indexed by a parameter $\varepsilon\in(0,1]$ and satisfying an appropriate growth condition as $\varepsilon \to 0^+$. Their construction is of course
inspired by that of the special Colombeau algebra $\mathcal{G}(\Omega)$, and a comparison between definitions might be useful for  the reader (see e.g. \cite[Chap. 1]{GGK}). This section is dedicated to the study of various useful properties of our factor algebras, including a ``null characterization" of the so-called negligible elements, basic sheaf-theoretic properties, an alternative projective type description of  $\mathcal{G}^{\ast}(\Omega)$ in the Roumieu case, associated rings of generalized numbers, and point value characterizations of our generalized functions. The embedding of ultradistributions is discussed in Section \ref{section-embedding}.
\subsection{Definition and basic properties.}We begin by introducing the following spaces of \emph{$\ast$-moderate} nets of ultradifferentiable functions,
\begin{align*}
\Efc(\Omega) = \{ (f_\varepsilon)_\varepsilon \in \Ef(\Omega)^{(0,1]} \, : \, &(\forall K \Subset \Omega) (\forall h > 0) (\exists k > 0) \\
 &\|f_\varepsilon\|_{\Eh(K)}  = O(e^{M(k/\varepsilon)})         \mbox{ as }  \varepsilon \to 0^+  \},
\end{align*}
\begin{align*}
\Eec(\Omega) = \{ (f_\varepsilon)_\varepsilon \in \Ee(\Omega)^{(0,1]} \, : \, &(\forall K \Subset \Omega) (\forall k > 0) (\exists h > 0) \\
 &\|f_\varepsilon\|_{\Eh(K)}  = O(e^{M(k/\varepsilon)})         \mbox{ as }  \varepsilon \to 0^+  \},
\end{align*}
and the spaces of $\ast$-\emph{negligible} elements
\begin{align*}
\Nf(\Omega) = \{ (f_\varepsilon)_\varepsilon \in \Ef(\Omega)^{(0,1]} \, : \, &(\forall K \Subset \Omega) (\forall h >0) (\forall k > 0) \\
 &\|f_\varepsilon\|_{\Eh(K)}  = O(e^{-M(k/\varepsilon)})         \mbox{ as }  \varepsilon \to 0^+  \},
\end{align*}
\begin{align*}
\Ne(\Omega) = \{ (f_\varepsilon)_\varepsilon \in \Ee(\Omega)^{(0,1]} \, : \, &(\forall K \Subset \Omega) (\exists k > 0) (\exists h > 0) \\
 &\|f_\varepsilon\|_{\Eh(K)}  = O(e^{-M(k/\varepsilon)})         \mbox{ as }  \varepsilon \to 0^+  \}.
\end{align*}

\begin{remark}\label{def Roumieu}
These definitions should be compared with those occurring in other works dealing with constructions of generalized function algebras based on nets of ultradifferentiable functions (cf. \cite{Benmeriem2,Benmeriem3,Delcroix2004,Delcroix,Gramchev,PilScar}). The most significant difference lies in our completely different choice of quantifiers in the Roumieu case. It is important to point out that this choice of quantifiers will play an essential role in the next section when embedding ultradistributions and preserving the product of $\ast$-ultradifferentiable functions.
\end{remark}

\smallskip
Note that the conditions $(M.1)$ and $(M.2)$ (cf. (\ref{assM.2})) obviously ensure that $\Esc(\Omega)$ is an algebra under pointwise multiplication of nets, and that $\Ns(\Omega)$ is an ideal of $\Esc(\Omega)$. Hence, we can define the algebra $\Gss(\Omega)$ of \emph{generalized functions of class $\ast$} (in short: $\ast$-generalized functions) as the factor algebra
\[ \Gss(\Omega) = \Esc(\Omega)/\Ns(\Omega). \]

We denote by $[(f_\varepsilon)_\varepsilon]$ the equivalence class of $(f_\varepsilon)_\varepsilon\in\Esc(\Omega)$.
Observe that $\Es(\Omega)$ can be regarded as a subalgebra of $\Gss(\Omega)$ via the constant embedding:
\begin{equation}
\label{eqconstantembedding}
\sigma(f): = [(f)_\varepsilon], \qquad  f \in \Es(\Omega).
\end{equation}
We also remark that $\Gss(\Omega)$ can be endowed with a canonical action of $\ast$-ultradifferential operators. In fact, since $\ast$-ultradifferential operators act continuously on $\Es(\Omega)$, we have that  $\Esc(\Omega)$ and $\Ns(\Omega)$ are closed under $\ast$-ultradifferential operators if we define their actions on nets as $P(D)((f_{\varepsilon})_{\varepsilon}):=(P(D)f_{\varepsilon})_{\varepsilon}$. Consequently, every $\ast$-ultradifferential operator $P(D)$ canonically induces a linear operator 
$$
P(D):\Gss(\Omega)\to \Gss(\Omega),
$$
which clearly satisfies the generalized Leibniz rule (\ref{equltrapolpol}) for any $f\in\Gss(\Omega)$ and the polynomial $q$ seen as $\sigma(q)$. 

We now show the null characterization of the ideal $\Ns(\Omega)$. This result is a very useful tool and will be constantly applied through the rest of the article.

 \begin{proposition} \label{nullchar}
 Let $(f_\varepsilon)_\varepsilon \in \Esc(\Omega)$. Then, $(f_\varepsilon)_\varepsilon \in \Ns(\Omega)$ if and only if 
\[(\forall K \Subset \Omega) ( \forall k > 0)\qquad ((\forall K \Subset \Omega) ( \exists k > 0))    \]
\[ \sup_{x \in K}|f_\varepsilon(x)| = O(e^{-M(k/\varepsilon)}). \]
 \end{proposition}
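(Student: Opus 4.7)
The forward direction is trivial: restricting the supremum defining $\|\cdot\|_{\Eh(K)}$ to $\alpha=0$ already produces the pointwise estimate claimed (for any admissible $h$). The genuine content is the converse, where I must promote pointwise smallness of $f_\varepsilon$ to smallness of every $\Eh(K)$-seminorm, using only that $(f_\varepsilon)_\varepsilon$ is $\ast$-moderate.

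The main analytic tool is a Landau–Kolmogorov interpolation inequality: fix $K \Subset K_1 \Subset \Omega$; then there exist constants $C_0,L_0>0$ depending only on $K, K_1, d$ such that for every $f\in C^\infty(K_1)$ and every multi-index $\alpha$ of length $p\geq 1$,
\[
 \sup_K |\partial^\alpha f| \;\leq\; C_0 L_0^p \Bigl(\sup_{K_1}|f|\Bigr)^{1/2} \Bigl(\max_{|\beta|=2p}\sup_{K_1}|\partial^\beta f|\Bigr)^{1/2}.
\]
This is obtained by iterating the classical one-variable estimate $\|g'\|_{L^\infty(J)}^2 \leq c\|g\|_{L^\infty(J')}\|g''\|_{L^\infty(J')}$ coordinate by coordinate along a nested chain of sub-neighborhoods between $K$ and $K_1$ (the additive boundary terms appearing on a bounded interval are negligible in our application, since $\sup|f_\varepsilon|$ will be exponentially smaller than $\sup|\partial^\beta f_\varepsilon|$). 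Plugging this inequality into moderateness and using the consequence $M_{2p}\leq AH^{2p}M_p^2$ of $(M.2)$ yields
\[
 \sup_K |\partial^\alpha f_\varepsilon| \;\leq\; C_1 (L_0 H h_0)^p M_p\, e^{(M(k_0/\varepsilon)-M(k_1/\varepsilon))/2},
\]
where $(h_0,k_0)$ is the pair supplied by moderateness on $K_1$ and $k_1$ is the parameter supplied by the pointwise smallness hypothesis.

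It remains to absorb the exponential factor into $e^{-M(k/\varepsilon)}$, which is precisely the work done by $(M.2)$ in the form \eqref{assM.2}. Iterating $2M(t)\leq M(Ht)+\log A$ twice gives $4M(t)\leq M(H^2 t)+3\log A$, so choosing $k_1\geq H^2\max(k,k_0)$ forces $M(k_1/\varepsilon)\geq 2M(k_0/\varepsilon)+2M(k/\varepsilon)-3\log A$, making the exponential factor at most $A^{3/4}e^{-M(k/\varepsilon)}$. In the Beurling case, for arbitrary prescribed $h,k>0$ I take $h_0:=h/(L_0 H)$, let moderateness return some $k_0$, and then freely pick $k_1:=H^2\max(k,k_0)$—permissible because Beurling smallness holds for \emph{every} $k_1$. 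In the Roumieu case, smallness supplies one fixed $k_1$; I set $k:=k_0:=k_1/H^2$, have Roumieu moderateness return some $h_0$, and put $h:=L_0 H h_0$, producing the single pair $(h,k)$ demanded by Roumieu negligibility. The subtlest point, and the only real obstacle, is that in the Roumieu case this single $h_0$ must simultaneously control derivatives of \emph{every} order $\alpha$; the argument succeeds precisely because the Landau–Kolmogorov inequality is uniform in $\alpha$, which is what motivates the choice of quantifiers in our Roumieu definitions (cf.\ Remark \ref{def Roumieu}).
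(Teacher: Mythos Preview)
Your strategy matches the paper's: both interpolate via Landau--Kolmogorov between order $0$ and order $2|\alpha|$, then invoke $(M.2)$ once to pass from $M_{2p}^{1/2}$ to $M_p$ and once more (via \eqref{assM.2}) to absorb the exponential factor. Your quantifier bookkeeping for the Beurling and Roumieu cases is correct and matches the paper's in spirit.

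Where you diverge is in the localization. The paper does not work on nested compacta $K\Subset K_1$; instead it multiplies $f_\varepsilon$ by a cutoff $\psi\in\Ds(\Omega)$ equal to $1$ near $K$, so that $g_\varepsilon:=\psi f_\varepsilon$ extends by zero to all of $\R^d$ and is still moderate. It then proves and applies the \emph{global} inequality
\[
\sup_{|\alpha|=p}\|g^{(\alpha)}\|_{L^\infty(\R^d)}\;\leq\;2\pi\, d^{p}\,\|g\|_{L^\infty(\R^d)}^{1/2}\,\sup_{|\beta|=2p}\|g^{(\beta)}\|_{L^\infty(\R^d)}^{1/2},
\]
established via the Chen--Ditzian pointwise bound by directional derivatives together with the one-dimensional Kolmogorov inequality on the full line. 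This buys a clean inequality with no boundary term and with transparently geometric constants in $p$.

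Your displayed local inequality, by contrast, is literally false as written: take $f$ affine on $K_1$, so the right-hand side vanishes while the left does not. What is true on a bounded domain is a Gorny-type bound with $\max\bigl(\sup_{K_1}|f|,\,\max_{|\beta|=2p}\sup_{K_1}|\partial^\beta f|\bigr)^{1/2}$ replacing the second factor, and one must still check that the constants grow only geometrically in $p$. Your parenthetical about boundary terms being ``negligible in our application'' is correct in outcome, but it leaves both the form of the boundary correction and the growth of the constants unjustified. The argument becomes fully rigorous---and identical to the paper's---if you trade the nested compacta for the cutoff $\psi$ and work on $\R^d$.
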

 \begin{proof}
 For the proof of this proposition, we establish the following multivariate version of the Landau-Kolmogorov inequality,
\begin{equation}
\label{Landau-Kolmogorov inequality} \sup_{|\alpha| = k} \| f^{(\alpha)}\|_{L^\infty(\R^d)} \leq 2\pi d^k \| f \|^{1 - k/n}_{L^\infty(\R^d)}\sup_{|\alpha| = n} \| f^{(\alpha)}\|^{k/n}_{L^\infty(\R^d)}, \quad 0 < k < n, 
\end{equation}
which we claim to be valid for any $f \in C^\infty(\R^d)$ such that $f^{(\alpha)} \in L^\infty(\R^d)$ for every $|\alpha|\leq n$. We prove (\ref{Landau-Kolmogorov inequality}) below, but let us momentarily assume it and show how the conclusion of the proposition follows from it.
 
 Suppose $(f_\varepsilon)_\varepsilon$ satisfies the $0$-th order estimate. Let $K \Subset \Omega$  and choose $\psi \in \Ds(\Omega)$ such that $\psi \equiv 1$ on a neighborhood of $K$. Set  $(g_\varepsilon)_\varepsilon = (\psi f_\varepsilon)_\varepsilon \in \Esc(\Omega)$ and $K' = \operatorname{supp} \psi$. We have
\[(\forall h_1 > 0)(\exists k_1 > 0)(\exists C_1 > 0)(\exists \varepsilon_1 > 0) \quad ((\forall k_1 > 0)(\exists h_1 > 0)(\exists C_1 > 0)(\exists \varepsilon_1 > 0))    \]
\[ \| g_\varepsilon^{(\alpha)}\|_{L^\infty(\R^d)} \leq C_1 h_1^{|\alpha|}M_{|\alpha|}e^{M(k_1/\varepsilon)}, \qquad \forall \alpha \in \N^d, \varepsilon \in (0,\varepsilon_1),  \] 
and
\[(\forall k_2 > 0)(\exists C_2 > 0)(\exists \varepsilon_2 > 0) \qquad  ((\exists k_2 > 0)(\exists C_2 > 0)(\exists \varepsilon_2 > 0))    \]
\[ \sup_{x \in K'}|f_\varepsilon(x)| \leq C_2e^{-M(k_2/\varepsilon)}, \qquad \forall \alpha \in \N^d, \varepsilon \in (0,\varepsilon_2). \]
 Let $\beta \in \N^d, \beta \neq 0$. Applying (\ref{Landau-Kolmogorov inequality}) with $k = |\beta|$ and $n = 2|\beta|$, one obtains
 \begin{align*}
\sup_{x \in K} |f^{(\beta)}_\varepsilon(x)| &\leq \| g_\varepsilon^{(\beta)}\|_{L^\infty(\R^d)} \\
& \leq 2\pi d^{|\beta|}\| g_\varepsilon \|^{1/2}_{L^\infty(\R^d)}\sup_{|\alpha| = 2|\beta|} \| g_\varepsilon^{(\alpha)}\|^{1/2}_{L^\infty(\R^d)} \\
& \leq 2\pi C_1^{1/2}C_2^{1/2} \| \psi\|_{L^\infty(\R^d)}^{1/2} (dh_1)^{|\beta|} M_{2|\beta|}^{1/2} e^{(M(k_1/\varepsilon) - M(k_2/\varepsilon) )/2},
\end{align*}
for all $\varepsilon \in (0,\min(\varepsilon_1,\varepsilon_2))$. That $(f_\varepsilon)_\varepsilon \in \Ns(\Omega)$ then follows from \eqref{assM.2} and the inequality $M_{2p}^{1/2} \leq A^{1/2}H^p M_p$, $p \in \N$, which is a consequence of $(M.2)$.

We now show (\ref{Landau-Kolmogorov inequality}). We give a proof based on results on bounds for directional derivatives from \cite{Chen} and the one-dimensional Landau-Kolmogorov inequality itself. Denote by $\partial \slash \partial \xi$ the directional derivative in the direction $\xi$, where $\xi\in\mathbb{R}^{d}$ is a unit vector. It is shown in \cite{Chen}  that  
\[\sup_{|\alpha| = k} \| f^{(\alpha)}\|_{L^\infty(\R^d)} \leq  \sup_{|\xi| = 1} \left \| \frac{\partial^k f}{\partial^k \xi} \right \|_{L^\infty(\R^d)}.\]
 Let  $l(x, \xi)$ denote the line in $\R^d$ with direction $\xi$ passing through the point $x$. The one-dimensional Landau-Kolgomorov \cite{Kolmogorov} inequality yields
 \begin{align*}
 \left \| \frac{\partial^k f}{\partial^k \xi} \right \|_{L^\infty(\R^d)} &= \sup_{x \in \R^d}  \left \| \frac{\partial^k f}{\partial^k \xi} \right \|_{L^\infty(l(x,\xi))} 
 \\
 & \leq 2\pi \| f \|^{1 - k/n}_{L^\infty(\R^d)} \left \| \frac{\partial^n f}{\partial^n \xi} \right \|^{k/n}_{L^\infty(\R^d)}.
\end{align*}
Combining these two inequalities, we obtain
 \begin{align*}
\sup_{|\alpha| = k} \| f^{(\alpha)}\|_{L^\infty(\R^d)} & \leq 2\pi \| f \|^{1 - k/n}_{L^\infty(\R^d)} \sup_{|\xi| = 1} \left \| \frac{\partial^n f}{\partial^n \xi} \right \|^{k/n}_{L^\infty(\R^d)} \\
 & = 2\pi \| f \|^{1 - k/n}_{L^\infty(\R^d)} \sup_{|\xi| = 1}\sup_{x \in \R^d} \left | \sum_{j_1 = 1}^d \cdots \sum_{j_n = 1}^d  \frac{\partial^n f(x)}{\partial x_{j_1}\cdots \partial x_{j_n} } \xi_{j_1} \cdots \xi_{j_d}  \right |^{k/n} \\
&\leq 2\pi d^k \| f \|^{1 - k/n}_{L^\infty(\R^d)}\sup_{|\alpha| = n} \| f^{(\alpha)}\|^{k/n}_{L^\infty(\R^d)}.
\end{align*}

 \end{proof}
 
 We now discuss sheaf properties of $\Gss(\Omega)$. Given an open subset $\Omega'$ of  $\Omega$ and $f = [(f_\varepsilon)_\varepsilon] \in \Gss(\Omega)$, the restriction of $f$ to $\Omega'$ is defined as
\[f_{|\Omega'} = [(f_{\varepsilon|\Omega'})_\varepsilon] \in \Gss(\Omega'). \]

Using the existence of $\mathcal{D}^{\ast}$-partitions of the unity, ensured by $(M.3)'$ and $(M.1)$, one can show that the assignment $\Omega' \rightarrow \Gss(\Omega')$ is a fine sheaf of differential algebras on $\Omega$. In fact, this is not so hard to verify directly, but we mention the proof can be considerably simplified by reasoning exactly as in \cite[Thrm. 1.2.4]{GGK} with the aid of Proposition \ref{nullchar}. Furthermore, another application of Proposition \ref{nullchar} allows one to show that the sheaf $\mathcal{G}^{\ast}$ is also supple via a straightforward modification of the arguments given in \cite{Ober-Pil-Scarp}; we leave the details of such modifications to the reader. The support of $f \in \Gss(\Omega)$, a section of $\Gss$ on $\Omega$, is defined in the standard way. We recall \cite{Bengel-Schapira} that suppleness means that if $f\in\mathcal{G}^{\ast}(\Omega)$ has support in the union of two closed sets $Z_1\cup Z_2$, then it can be written as $f=f_{1}+f_{2}$, where each $f_{j}$ has support in $Z_{j}$. Either suppleness or fineness of a sheaf over a paracompact space implies softness (see \cite{Bengel-Schapira} for supple sheaves, the implication for fine sheaves is well known); hence $\mathcal{G}^{\ast}$ is a soft sheaf. We collect all this useful information in the following proposition.

\begin{proposition}\label{sheaf}
The functor $\Omega \rightarrow \Gss(\Omega)$ is a fine and supple sheaf of differential algebras. Every $\ast$-ultradifferential operator $P(D): \Gss \rightarrow \Gss$ is a sheaf morphism. The sheaf $\mathcal{G}^{(M_p)}$ is never flabby.
\end{proposition}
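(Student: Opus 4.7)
The plan is to verify each assertion separately, relying throughout on Proposition~\ref{nullchar} and on the existence of $\De$-partitions of unity (guaranteed by $(M.1)$ and $(M.3)'$). For the \emph{separation axiom}: if a representative $(f_\varepsilon)$ of $f\in\Gss(\Omega)$ restricts to a negligible net on every $\Omega_i$ of an open cover, any compact $K\Subset\Omega$ splits as a finite union $K=K_1\cup\cdots\cup K_m$ with $K_j\Subset\Omega_{i_j}$, and the pointwise bounds on each $K_j$ combine into a pointwise bound on $K$; Proposition~\ref{nullchar} then yields $(f_\varepsilon)\in\Ns(\Omega)$. For \emph{gluing}: choose a locally finite $\De$-partition of unity $(\chi_n)$ with $\operatorname{supp}\chi_n\subset\Omega_{i(n)}$, pick representatives $(f_{i(n),\varepsilon})$ of the compatible local sections, and set $f_\varepsilon=\sum_n\chi_n f_{i(n),\varepsilon}$. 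Only finitely many terms contribute on any compact subset, so both moderation and the compatibility identities $f_{|\Omega_i}=f_i$ reduce to finite computations at the representative level, combined again with Proposition~\ref{nullchar}. Fineness is then automatic, since the classes $\sigma(\chi_n)\in\Gss(\Omega)$ constitute a $\Gss$-valued partition of unity subordinate to the cover. Suppleness follows by transposing the argument of \cite{Ober-Pil-Scarp}: given $\operatorname{supp}f\subset Z_1\cup Z_2$, apply a $\De$-partition of unity subordinate to an open cover adapted to $(Z_1,Z_2)$ to a representative of $f$, and verify via Proposition~\ref{nullchar} that each of the resulting pieces is supported in the required $Z_j$. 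Finally, that each ultradifferential operator $P(D)$ is a sheaf morphism is immediate: its action is pointwise on representatives and therefore commutes with restriction.

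For non-flabbiness of $\Gf$, I will exhibit $f\in\Gf(\R^d\setminus\{0\})$ admitting no extension to $\R^d$. Fix $\phi\in\Df(\R^d)$ with $\phi(0)=1$ and $\operatorname{supp}\phi\subset B(0,1)$, choose $x_n\to 0$ and radii $r_n>0$ so that the supports of $\phi_n(x):=\phi((x-x_n)/r_n)$ are pairwise disjoint, and set
\[
f_\varepsilon(x)=\sum_{n\geq 1} e^{M(n/\varepsilon)}\phi_n(x).
\]
Any $K\Subset\R^d\setminus\{0\}$ meets only finitely many $\operatorname{supp}\phi_n$, so for each fixed $h>0$ the seminorm $\|f_\varepsilon\|_{\Eh(K)}$ is a finite sum of terms $e^{M(n/\varepsilon)}\|\phi_n\|_{\Eh(K)}$, whence $(f_\varepsilon)\in\Efc(\R^d\setminus\{0\})$. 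Now suppose $\widetilde f=[(\widetilde f_\varepsilon)]\in\Gf(\R^d)$ restricts to $f$ on $\R^d\setminus\{0\}$. On a compact neighborhood $K$ of $0$, $(M_p)$-moderation would give some $k$ with $\sup_K|\widetilde f_\varepsilon|=O(e^{M(k/\varepsilon)})$, while Proposition~\ref{nullchar} applied to $\widetilde f_{|\R^d\setminus\{0\}}-f$ would yield $\widetilde f_\varepsilon(x_n)=e^{M(n/\varepsilon)}+O(e^{-M(\ell/\varepsilon)})$ for every $\ell>0$. Choosing $n>k$ would then force $e^{M(n/\varepsilon)-M(k/\varepsilon)}$ to remain bounded as $\varepsilon\to 0^+$, contradicting the fact that $M(n/\varepsilon)-M(k/\varepsilon)\to+\infty$ (a consequence of iterating the doubling inequality $M(Ht)\geq 2M(t)-\log A$ provided by $(M.2)$, together with $M(t)\to+\infty$).

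The main obstacle is the non-flabbiness step: one must construct an $f$ whose moderation is just good enough on $\R^d\setminus\{0\}$ yet fails near $0$. In the Beurling case this is somewhat delicate because moderation must hold for \emph{every} $h>0$, but the fact that each $\phi_n$ is a single fixed dilation of $\phi$ and only finitely many indices are relevant on any compact set bounded away from $0$ keeps the $\Eh(K)$-seminorm controllable per $h$. Everything else reduces to routine book-keeping with the structural properties of $M(t)$ encoded in $(M.1)$--$(M.3)'$.
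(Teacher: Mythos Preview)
Your treatment of the sheaf axioms, fineness, suppleness, and the claim that $P(D)$ is a sheaf morphism follows exactly the route the paper takes (partitions of unity together with Proposition~\ref{nullchar}, with reference to \cite{GGK} and \cite{Ober-Pil-Scarp}). For non-flabbiness the two arguments diverge. The paper works on a general $\Omega$: for any $x_0\in\partial\Omega'\cap\Omega$ it writes down the single explicit representative $f_\varepsilon(x)=e^{|x-x_0|^{-2}M(1/\varepsilon)}$, whose $(M_p)$-moderation on $\Omega'$ follows from \eqref{assM.2}, and which visibly admits no extension across $x_0$. Your construction---a locally finite sum of disjoint $\Df$-bumps with coefficients $e^{M(n/\varepsilon)}$ accumulating at the origin---is also correct and has the merit that moderation is immediate (only finitely many fixed bumps matter on each compact set), so you never have to check that the exponential of a singular smooth function is $(M_p)$-ultradifferentiable. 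The price is a longer argument and, as written, only the case $\Omega=\R^d$; the translation to an arbitrary $x_0\in\Omega$ is of course trivial but should be said. One small remark: your justification that $M(n/\varepsilon)-M(k/\varepsilon)\to\infty$ via iteration of the doubling inequality from \eqref{assM.2} is most transparent once $n\ge Hk$; since $n$ is a free parameter this costs nothing, but it would be cleaner to choose $n$ that large explicitly.
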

\begin{proof}
It only remains to show that $\mathcal{G}^{(M_p)}$ is not flabby. If it were flabby on $\Omega$, the restriction mappings $\mathcal{G}^{(M_p)}(\Omega)\to \mathcal{G}^{(M_p)}(\Omega')$ would be surjective for all $\Omega'\subset\Omega$. However, if $\partial\Omega'\cap \Omega\neq\emptyset$, this is not the case. In fact, let $x_0\in\partial\Omega'\cap \Omega$. 
In view of (\ref{assM.2}), the generalized function $[(f_{\varepsilon})_{\varepsilon}]$ with representative $f_{\varepsilon}(x)=e^{|x-x_0|^{-2} M(1/\varepsilon)}$ belongs to ${\mathcal{G}}^{(M_p)}(\Omega')$, but it has no extension to any neighborhood of $x_{0}$.  \end{proof}

We denote
$$
\Gss_c(\Omega)=\{f\in\Gss(\Omega):\: \operatorname*{supp} f \mbox{ is compact}\},
$$
the ideal of compactly supported $\ast$-generalized functions on $\Omega$. Note that $f \in \Gss_{c}(\Omega)$  if and only if there exists a representative $(f_\varepsilon)_\varepsilon$ of $f$ and a  $K \Subset \Omega$ such that $\operatorname{supp} f_\varepsilon \subseteq K$ for all $\varepsilon \in (0,1]$.
In such a case, we call the representative net \emph{compactly supported}.  

\subsection{Projective description of $\mathcal{G}^{\{M_p\}}(\Omega)$}\label{subsection projective}
 We now give an alternative projective type description of the algebra  of generalized functions of Roumieu type. As in the theory of ultradifferentiable functions \cite{Carmichael,Komatsu3, Pil94}, we shall do this by using the family $\mathcal{R}$ of all non-decreasing sequences $(r_j)_{j \in \N}$ tending to infinity. This set is partially ordered and directed by the relation $r_j \preceq s_j$, namely, there is a $j_0 \in \N$ such that $r_j \leq s_j$ for all $j \geq j_0$. Let $r_j \in \mathcal{R}$. We denote the associated function of the sequence $M_p\prod_{j = 0}^pr_j$ by $M_{r_j}$. Furthermore, for $K \Subset \Omega$, we define
 \begin{equation}
\label{projective seminorms}
  \| \phi \|_{K, r_j} := \sup_{\substack{x \in K \\ \alpha \in \N^d}} \frac{|\phi^{(\alpha)}(x)|}{M_{|\alpha|}\prod_{j = 0}^{|\alpha|}r_j}\:, \qquad \phi \in  C^{\infty}(\Omega). 
\end{equation}
 It is clear that the above seminorm is finite whenever $\phi \in \mathcal{E}^{\{M_p\}}(\Omega)$. 
We can define projective type spaces of moderate and negligible nets in terms of these norms as follows
 \begin{align*}
\widetilde{\mathcal{E}}^{\{M_p\}}_{\mathcal{M}}(\Omega) = \{ (f_\varepsilon)_\varepsilon \in  {\mathcal{E}^{\{M_p\}}(\Omega)}^{(0,1]} \, : \, &(\forall K \Subset \Omega) (\forall r_j \in \mathcal{R}) (\exists s_j \in \mathcal{R}) \\
 &\|f_\varepsilon\|_{K, r_j}  = O(e^{M_{s_j}(1/\varepsilon)})\},
\end{align*}
and
\begin{align*}
\widetilde{\mathcal{E}}^{\{M_p\}}_{\mathcal{N}}(\Omega) = \{ (f_\varepsilon)_\varepsilon \in  {\mathcal{E}^{\{M_p\}}(\Omega)}^{(0,1]} \, : \, &(\forall K \Subset \Omega) (\forall r_j \in \mathcal{R}) (\forall s_j \in \mathcal{R}) \\
 &\|f_\varepsilon\|_{K, r_j}  = O(e^{-M_{s_j}(1/\varepsilon)})\}.
\end{align*}
The goal of this subsection is to show the following result:
\begin{proposition}\label{projective}
We have $\mathcal{E}^{\{M_p\}}_{\mathcal{M}}(\Omega) =  \widetilde{\mathcal{E}}^{\{M_p\}}_{\mathcal{M}}(\Omega)$ and $\mathcal{E}^{\{M_p\}}_{\mathcal{N}}(\Omega) =  \widetilde{\mathcal{E}}^{\{M_p\}}_{\mathcal{N}}(\Omega)$ .
\end{proposition}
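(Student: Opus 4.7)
The plan is to derive both equalities from two classical ingredients in the Komatsu--Pilipovi\'c theory, transplanted to the parameter-dependent setting: \emph{(I) the quantitative projective description} of $\varinjlim_{h\to\infty}\Eh(K)$, asserting that a subset $\mathcal{B}$ is bounded in this inductive limit if and only if $\sup_{\phi\in\mathcal{B}}\|\phi\|_{K,r_j}<\infty$ for every $r_j\in\mathcal{R}$, in which case there exist $h,C>0$ with $\sup_{\phi\in\mathcal{B}}\|\phi\|_{\Eh(K)}\le C$; and \emph{(II) the associated-function dualities}
\begin{align*}
\forall k>0:\ g(\varepsilon)=O(e^{M(k/\varepsilon)}) &\iff \exists s_j\in\mathcal{R}:\ g(\varepsilon)=O(e^{M_{s_j}(1/\varepsilon)}), \\
\exists k>0:\ g(\varepsilon)=O(e^{-M(k/\varepsilon)}) &\iff \forall s_j\in\mathcal{R}:\ g(\varepsilon)=O(e^{-M_{s_j}(1/\varepsilon)}).
\end{align*}
Both are standard Komatsu-type equivalences proved by diagonal constructions over the directed set $\mathcal{R}$ (cf.~\cite{Komatsu3, Pil94}). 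A recurring elementary estimate is that, for every $r_j\in\mathcal{R}$ and $h>0$, the convergence $r_j\to\infty$ yields $h^{|\alpha|}\le C_{r,h}\prod_{j=0}^{|\alpha|}r_j$; dually, for every $s_j\in\mathcal{R}$ and $k>0$, the eventual inequality $s_j\ge 1/k$ gives $\prod_{j=0}^{p}s_j\ge c_{s,k}k^{-p}$ and hence $M_{s_j}(1/\varepsilon)\le M(k/\varepsilon)+O_{s,k}(1)$.

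For $\Eec(\Omega)=\widetilde{\mathcal{E}}^{\{M_p\}}_{\mathcal{M}}(\Omega)$: given $(f_\varepsilon)\in\Eec(\Omega)$, fix $K\Subset\Omega$ and $r_j\in\mathcal{R}$. Applying the moderate hypothesis at $k=1/n$ for each $n\in\mathbb{N}_+$ yields $h_n, C_n$ with $\|f_\varepsilon\|_{\Eh(K)}$ at level $h_n$ being $O(e^{M(1/(n\varepsilon))})$; the elementary estimate transfers this to $\|f_\varepsilon\|_{K,r_j}=O(e^{M(1/(n\varepsilon))})$ for every $n$, and the first duality of (II) supplies the desired $s_j\in\mathcal{R}$. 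Conversely, for $(f_\varepsilon)\in\widetilde{\mathcal{E}}^{\{M_p\}}_{\mathcal{M}}(\Omega)$ and $k>0$, consider the family $\mathcal{B}=\{e^{-M(k/\varepsilon)}f_\varepsilon:\varepsilon<\varepsilon_0\}$. For each $r_j\in\mathcal{R}$ the hypothesis provides $s_j$; the bound $M_{s_j}(1/\varepsilon)\le M(k/\varepsilon)+O(1)$ ensures $\sup_\varepsilon\|e^{-M(k/\varepsilon)}f_\varepsilon\|_{K,r_j}<\infty$, so $\mathcal{B}$ is bounded in the projective description, and (I) produces $h,C$ with $\|f_\varepsilon\|_{\Eh(K)}\le Ce^{M(k/\varepsilon)}$.

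The equality $\Ne(\Omega)=\widetilde{\mathcal{E}}^{\{M_p\}}_{\mathcal{N}}(\Omega)$ follows by the same scheme. The forward inclusion is immediate: combining the fixed $k,h$ of the negligibility estimate with both elementary bounds applied to any $r_j,s_j\in\mathcal{R}$ yields $\|f_\varepsilon\|_{K,r_j}=O(e^{-M_{s_j}(1/\varepsilon)})$. For the reverse inclusion, specializing the hypothesis to $|\alpha|=0$ with any fixed $r_j$ produces $\sup_{x\in K}|f_\varepsilon(x)|=O(e^{-M_{s_j}(1/\varepsilon)})$ for every $s_j\in\mathcal{R}$; the second duality of (II) supplies $k>0$ with $\sup_{x\in K}|f_\varepsilon(x)|=O(e^{-M(k/\varepsilon)})$. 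Since $\widetilde{\mathcal{E}}^{\{M_p\}}_{\mathcal{N}}(\Omega)\subseteq\widetilde{\mathcal{E}}^{\{M_p\}}_{\mathcal{M}}(\Omega)=\Eec(\Omega)$ (by the moderate case just proved and $e^{-M_{s_j}(1/\varepsilon)}\le 1$), the net $(f_\varepsilon)$ is moderate, so Proposition~\ref{nullchar} lifts this zeroth-order decay to the full negligibility estimate on all derivatives.

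The principal technical obstacle is the nontrivial direction of ingredient (II): packaging a family of bounds at the countable sequence of scales $k=1/n$ into a single $M_{s_j}$-bound (first duality), and dually extracting an exponential decay rate $e^{-M(k/\varepsilon)}$ from universal $e^{-M_{s_j}}$-decay (second duality). Both require the careful diagonal constructions in $\mathcal{R}$ that form the classical technical core of the projective description of Roumieu-type ultradifferentiable classes.
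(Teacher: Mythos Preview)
Your proposal is correct and takes essentially the same approach as the paper: both proofs rest on the associated-function dualities (your ingredient (II) is exactly the paper's Lemma~\ref{projective1}), the projective characterization of bounded sets in Roumieu classes (your ingredient (I)), and Proposition~\ref{nullchar} for the negligible part. The only cosmetic difference is that for the reverse inclusion $\widetilde{\mathcal{E}}^{\{M_p\}}_{\mathcal{M}}(\Omega)\subseteq\Eec(\Omega)$ the paper argues by contradiction after multiplying by a cutoff to land in $\De(K')$, whereas you apply the bounded-set description directly to the family $\{e^{-M(k/\varepsilon)}f_\varepsilon\}$; these are two phrasings of the same idea.
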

The proof of Proposition \ref{projective} is based on the ensuing lemma, which only makes use of the conditions $(M.1)$ and $(M.2)$ on the weight sequence $M_p$ (actually $(M.1)$ and $(M.2)'$ suffice).
\begin{lemma}\label{projective1}
Let $g: [a,\infty) \rightarrow [0,\infty)$ for some $a \in \R$. Then,
\begin{enumerate}
\item[$(i)$] $g(t) = O(e^{M(kt)})$ for all $k > 0$ if and only if $g(t) = O(e^{M_{r_j}(t)})$ for some sequence $r_j \in \mathcal{R}$. 
\item[$(ii)$] $g(t)= O(e^{-M_{r_j}(t)})$ for all sequences $r_j \in \mathcal{R}$ if and only if $g(t) = O(e^{-M(kt)})$ for some $k > 0$.  
\end{enumerate}
\end{lemma}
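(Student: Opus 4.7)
The plan is to reduce the lemma to a single sharp comparison between $M_{r_j}$ and the functions $M(k\,\cdot)$, and then handle the two nontrivial directions by a common ``staircase'' construction of the sequence $r_j$.

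The preliminary step is the inequality
\[
M_{r_j}(t) \leq M(kt) + C, \qquad t \geq 0,
\]
valid for every $r_j \in \mathcal{R}$ and every $k > 0$, for some $C = C(k, r_j) \geq 0$. This is obtained by term-by-term comparison of the two suprema: since $r_j \to \infty$, there is $j_0$ with $r_j \geq 1/k$ for $j \geq j_0$, which forces $\prod_{j=0}^p r_j \geq c\, k^{-p}$ for some $c > 0$ uniformly in $p \in \N$, and hence $t^p/(M_p \prod_{j=0}^p r_j) \leq e^C (kt)^p/M_p$. Combining this single estimate with the hypothesis of $(\Leftarrow)$ in (i), respectively (ii), immediately yields the two easy directions.

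Both $(\Rightarrow)$ directions use the same idea: a nondecreasing staircase $r_j := n$ on $j \in (p_{n-1}, p_n]$, where $0 = p_0 < p_1 < p_2 < \cdots$ is an integer sequence to be chosen, forces $R_p := \prod_{j=0}^p r_j \leq n^p$ whenever $p \leq p_n$, and therefore
\[
M_{r_j}(t) \geq \log \frac{t^p}{M_p R_p} \geq \log \frac{(t/n)^p}{M_p} \quad \text{for every } p \leq p_n.
\]
Under $(M.1)$ the supremum defining $M(t/n)$ is attained at an index $p^{\ast}(t,n)$ that is nondecreasing in $t$, so to force the inequality above to approximate $M(t/n)$ itself it suffices to pick $p_n$ larger than $p^{\ast}(t,n)$ for $t$ in a suitable window. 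For (i)$(\Rightarrow)$, in which one is given $g(t) \leq C_n e^{M(t/n)}$ for each $n$, I would choose a sequence $t_n \nearrow \infty$ and then $p_n \geq p^{\ast}(t_{n+1}, n)$ large enough to absorb $\log C_n$; this delivers $M_{r_j}(t) \geq M(t/n) - \log C_n + O(1)$ on $[t_n, t_{n+1}]$, and gluing the windows gives $g = O(e^{M_{r_j}(t)})$. For (ii)$(\Rightarrow)$, I would argue by contraposition: if $g$ is not $O(e^{-M(kt)})$ for any $k$, one extracts $t_n \to \infty$ with $g(t_n) e^{M(t_n/n)} \geq n$, takes $p_n$ to essentially attain the supremum in $M(t_n/n)$, and uses the same staircase to obtain $M_{r_j}(t_n) \geq M(t_n/n) - O(1)$, whence $g(t_n) e^{M_{r_j}(t_n)} \to \infty$, contradicting $g = O(e^{-M_{r_j}(t)})$.

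The main obstacle in both $(\Rightarrow)$ directions is the same and is purely technical: one must control where the supremum defining $M(t/n)$ is concentrated as a function of $t$, and synchronize the staircase indices $p_n$ with that information. This is precisely the role of $(M.1)$, via log-convexity of $M_p$ and hence monotonicity of $p^{\ast}(t,n)$. Condition $(M.2)$ itself is not needed — only the weaker stability $(M.2)'$ — in agreement with the remark preceding the lemma.
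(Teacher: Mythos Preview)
Your direct staircase construction is a genuinely different route from the paper's. For $(i)$ the paper first uses $(M.2)$ to replace the family of bounds $g(t) \leq C_k e^{M(kt)}$ by $g(t) \leq A e^{M(kt)}$ for $t$ large (a \emph{single} constant $A$), then builds a piecewise-linear subordinate function $\rho$ with $g \leq A e^{M(\rho(t))}$, and finally invokes three lemmas of Komatsu to manufacture a sequence $r_j \in \mathcal{R}$ from $\rho$. For $(ii)$ it again passes through subordinate functions and argues by contradiction at a sequence $t_n$, much as you do. Your approach avoids the Komatsu citations entirely, and for $(ii)$ it works essentially as written (modulo ensuring the $p_n$ are strictly increasing, which is harmless since only $p_n \geq p^\ast(t_n/n)$ is needed).

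For $(i)$, however, there is a real gap in the constant-absorption step. The phrase ``choose $p_n$ large enough to absorb $\log C_n$'' does not achieve what you claim. With the staircase $r_j = n$ on $(p_{n-1}, p_n]$, the sharpest inequality the construction yields on the window $[t_n, t_{n+1}]$ is $M_{r_j}(t) \geq M(t/n)$, with \emph{no} additive gain: enlarging $p_n$ only enlarges the range of indices $p$ available to realize the supremum defining $M(t/n)$; it cannot push $M_{r_j}$ above $M(\cdot/n)$. Hence $g(t) \leq C_n e^{M(t/n)} \leq C_n e^{M_{r_j}(t)}$ still carries the window-dependent constant $C_n$, and your stated inequality $M_{r_j}(t) \geq M(t/n) - \log C_n + O(1)$ (even if it held) goes the wrong way for this purpose. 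The correct fix uses $(M.2)'$, but on the choice of $t_n$, not $p_n$: the associated-function form of $(M.2)'$ gives $M(Ht) - M(t) \geq \log t - C$, so for each $n$ there is a threshold beyond which $g(t) \leq A e^{M(t/n)}$ with $A$ independent of $n$; choosing $t_n$ past this threshold makes your staircase argument go through with a uniform constant. You correctly flag $(M.2)'$ at the end, but its role is precisely this step, and the mechanism you describe (via $p_n$) is not the right one. A minor secondary issue: writing $p_n \geq p^\ast(t_{n+1}/n)$ presupposes $t_{n+1}$; this is harmless once all $t_n$ are fixed first.
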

\begin{proof}
We only need to show the direct implications, the "if" parts are clear. We may assume $a = 0$.
\newline $(i)$ We first show that there exists a subordinate function $\rho: [0, \infty) \to [0, \infty)$ (which means that $\rho$ is continuous, increasing, and satisfies $\rho(0) = 0$ and $\rho(t) = o(t)$) such that $g(t) = O(e^{M(\rho(t))})$. Our assumption and \eqref{assM.2} imply that 
\[ (\forall k > 0)(\exists R > 0)(\forall t \geq R)(g(t) \leq Ae^{M(kt)}). \]
Hence we can inductively select a sequence $(t_n)_{n \in \Z_+}$ with $t_1 = 0$ that satisfies 
\[ g(t) \leq Ae^{M(t/(n+1))}, \quad \forall t \geq t_n, \qquad \frac{t_{n}}{n} \geq \frac{t_{n-1}}{n-1} + 1, \qquad \forall n \geq 2.  \]
Let $l_n$ denote the line through the points $(t_n, t_n/n)$ and $(t_{n+1}, t_{n+1}/(n+1))$, and define
\[ \rho(t) = l_n(t), \qquad \mbox{for } t \in [t_n, t_{n+1}), \]
The function $\rho$ is subordinate and, moreover, $g(t) \leq Ae^{M(\rho(t))}$ for all $t \geq t_2$. Thus, it suffices to prove that there is a sequence $r_j \in \mathcal{R}$ such that $M(\rho(t)) \leq M_{r_j}(t) + C$ for $t \geq 0$, for some $C > 0$. Applying \cite[Lemm. 3.12]{Komatsu}, one finds a weight sequence $N_p$ satisfying $(M.1)$ with associated function $N$ such that $M_p \prec N_p$ and $M(\rho(t)) \leq N(t)$ for $t \geq 0$. On the other hand, \cite[Lemm. 3.4]{Komatsu3} yields the existence of a sequence $r_j \in \mathcal{R}$ with  $M_p\prod_{j=1}^p r_j \prec N_p$. The result now follows from  \cite[Lemm. 3.10]{Komatsu}.
\\ \\
$(ii)$ The last part of the proof of $(i)$ and our assumption imply that $g(t) = O(e^{-M(\rho(t))})$ for all subordinate functions $\rho$. Suppose that $g(t) = O(e^{-M(kt)})$ does not hold for any $k > 0$. 
We could therefore find a sequence $(t_n)_{n \in \Z_+}$ with $t_1 = 0$ and 
\[ g(t_{n}) e^{M(t_n/n)}\geq n, \qquad \frac{t_{n}}{n} \geq \frac{t_{n-1}}{n-1} + 1, \qquad \forall n \geq 2.  \]
Exactly as in the proof of $(i)$, we define the subordinate function $\rho(t)$ given by  line through $(t_n, t_n/n)$ and $(t_{n+1}, t_{n+1}/(n+1))$ for $t \in [t_n, t_{n+1})$.
We would then have
\[g(t_n)e^{M(\rho(t_n))} = g(t_n)e^{M(t_n/n)} \geq n, \qquad \forall n \geq t_2,\]
contradicting $g(t) = O(e^{-M(\rho(t))})$.
\end{proof}

\begin{proof}[Proof of Proposition \ref{projective}.] We start by showing $\mathcal{E}^{\{M_p\}}_{\mathcal{M}}(\Omega) =  \widetilde{\mathcal{E}}^{\{M_p\}}_{\mathcal{M}}(\Omega)$. Let $(f_\varepsilon)_\varepsilon \in \mathcal{E}^{\{M_p\}}_{\mathcal{M}}(\Omega) $ and fix $K \Subset \Omega$ and $r_j  \in \mathcal{R}$. Since the seminorms (\ref{projective seminorms}) are continuous on $\mathcal{E}^{\{M_p\},h}(\Omega)$ for each $h >0$, we obtain $\|f_\varepsilon \|_{K, r_j} = O(e^{M(k/\varepsilon)})$ for all $k > 0$. By applying Lemma \ref{projective1}$(i)$ to the function $g(t) = \|f_{1/t} \|_{K, r_j}$, $t \geq 1$, we find a sequence $s_j \in \mathcal{R}$ such that $\|f_\varepsilon \|_{K, r_j} = O(e^{M_{s_j}(1/\varepsilon)})$.
Conversely, let $(f_\varepsilon)_\varepsilon \in \widetilde{\mathcal{E}}^{\{M_p\}}_{\mathcal{M}}(\Omega)$ and suppose that $(f_\varepsilon)_\varepsilon \notin \mathcal{E}^{\{M_p\}}_{\mathcal{M}}(\Omega) $. Then, there are $K \Subset \Omega$, $k > 0$, and a sequence $\varepsilon_n \searrow 0$ such that
\begin{equation}
\label{eqcontradiction1}
\| f_{\varepsilon_n}\|_{\mathcal{E}^{\{M_p\},n}(K)} \geq n e^{M(k/\varepsilon_n)}, \qquad \forall n \in \N. 
\end{equation}
Choose $K \Subset K' \Subset \Omega$ and $\psi \in \mathcal{D}^{\{M_p\}}(K')$ with $\psi \equiv 1$ in a neighborhood of $K$. In order to produce a contradiction to (\ref{eqcontradiction1}), we shall show that the sequence $(g_n)_n := (f_{\varepsilon_n}\psi e^{-M(k/\varepsilon_n)})_n$ is bounded in $\mathcal{D}^{\{M_p\}}(K')$. It well known \cite{Komatsu3} that 
\[ \mathcal{D}^{\{M_p\}}(K') = \varprojlim_{r_j \in \mathcal{R}} \mathcal{D}^{\{M_p\},r_j}(K'), \]
as t.v.s.\,, where $\mathcal{D}^{\{M_p\},r_j}(K')$ denotes the Banach space of all $\phi \in \mathcal{D}(K')$ with $\| \phi \|_{K', r_j} < \infty$. Hence we must prove that $(g_n)_n$ is bounded in $\mathcal{D}^{\{M_p\},r_j}(K')$ for each $r_j \in \mathcal{R}$. But $(f_\varepsilon)_\varepsilon \in \widetilde{\mathcal{E}}^{\{M_p\}}_{\mathcal{M}}(\Omega)$, thus we can find $s_j \in \mathcal{R}$ and $C > 0$ such that
\[ \sup_{n \in \N}\| g_n \|_{K', r_j} \leq C\sup_{n \in \N}e^{M_{s_j}(1/\varepsilon_n) - M(k/\varepsilon_n)} < \infty. \]
 For the second equality, the inclusion $\mathcal{E}^{\{M_p\}} _{\mathcal{N}} (\Omega) \subseteq  \widetilde{\mathcal{E}}^{\{M_p\}}_{\mathcal{N}} (\Omega)$ is clear, whereas the converse inclusion is a direct consequence of Proposition \ref{nullchar} and  Lemma \ref{projective1}$(ii)$.
\end{proof}

\subsection{Generalized point values.}
In this subsection we introduce the ring of $\ast$-generalized numbers in order to regard $\ast$-generalized functions as pointwise defined objects. The generalized numbers  $\widetilde{\C}^\ast$ are introduced as follows. 
Define 
\[ \mathbb{C}^{(M_p)}_{\mathcal{M}}= \{ (z_\varepsilon)_\varepsilon \in \C^{(0,1]} \, : \,  (\exists k > 0) (|z_\varepsilon| = O(e^{M(k/\varepsilon)})) \},  \]
\[ \mathbb{C}^{\{M_p\}}_{\mathcal{M}} = \{ (z_\varepsilon)_\varepsilon \in \C^{(0,1]} \, : \,  (\forall k > 0) (|z_\varepsilon| = O(e^{M(k/\varepsilon)})) \},  \]
and the ideals
\[ \mathbb{C}^{(M_p)}_{\mathcal{N}} = \{ (z_\varepsilon)_\varepsilon \in \C^{(0,1]} \, : \,  (\forall k > 0) (|z_\varepsilon| = O(e^{-M(k/\varepsilon)})) \},  \]
\[  
\mathbb{C}^{\{M_p\}}_{\mathcal{N}}= \{ (z_\varepsilon)_\varepsilon \in \C^{(0,1]} \, : \,  (\exists k > 0) (|z_\varepsilon| = O(e^{-M(k/\varepsilon)})) \}. 
 \]
Note that Lemma \ref{projective1} yields alternative descriptions of these rings in the Roumieu case:
\[ \mathbb{C}^{\{M_p\}}_{\mathcal{M}} = \{ (z_\varepsilon)_\varepsilon \in \C^{(0,1]} \, : \,  (\exists s_j \in \mathcal{R}) (|z_\varepsilon| = O(e^{M_{s_j}(1/\varepsilon)})) \},  \]
and 
\[ \mathbb{C}^{\{M_p\}}_{\mathcal{N}} = \{ (z_\varepsilon)_\varepsilon \in \C^{(0,1]} \, : \,  (\forall s_j \in \mathcal{R}) (|z_\varepsilon| = O(e^{-M_{s_j}(1/\varepsilon)})) \}. 
\]

The ring of $\ast$-\emph{generalized numbers} is then defined as the factor ring
\[ \widetilde{\C}^\ast = \mathbb{C}^{\ast}_{\mathcal{M}}/ \mathbb{C}^{\ast}_{\mathcal{N}}. \]
The reader should be aware of the fact that $\widetilde{\C}^\ast$ is not a field (this can be shown exactly with the same examples used for the rings of Colombeau generalized numbers, see e.g. \cite[Ex. 1.2.33, p. 32]{GGK}). Furthermore, the elements of $\mathbb{C}$ are canonically embedded into $\widetilde{\C}^\ast$ as constant nets. Likewise, one can define the subring $\widetilde{\R}^\ast$ and the modules $\widetilde{{\R}^{d}}^\ast$ and $\widetilde{{\C}^{d}}^\ast$.  More generally, we can associate to every open set $A\subseteq \mathbb{R}^{d}$ a set of \emph{$\ast$-generalized points} $\widetilde{A}^\ast\subset\widetilde{{\R}^{d}}^\ast$ in the following way. Let $A^\ast_{\mathcal{M}} = \{ (x_\varepsilon)_\varepsilon \in A^{(0,1]} \, : \, (|x_\varepsilon|)_\varepsilon \in \mathbb{C}^{\ast}_{\mathcal{M}} \} $;
two elements $(x_\varepsilon)_\varepsilon,  (y_\varepsilon)_\varepsilon \in A^\ast_{\mathcal{M}}$ are said to be equivalent, denoted, say, by $(x_\varepsilon)_\varepsilon \sim  (y_\varepsilon)_\varepsilon$, if $(|x_\varepsilon - y_\varepsilon|)_\varepsilon \in \mathbb{C}^{\ast}_{\mathcal{N}}$. We define then $ \widetilde{A}^\ast =  A^{\ast}_{\mathcal{M}} / \sim$. 

We are ready to discuss some pointwise properties of $\ast$-generalized functions. Let $f = [(f_\varepsilon)_\varepsilon] \in \Gss(\Omega)$. If $x \in \Omega$, then it is obvious that $ f(x):=[(f_\varepsilon(x))_\varepsilon]$ is a well defined element of $\widetilde{\C}^\ast$, the point value of $f$ at $x$. Thus, every $\ast$-generalized function induces an actual $\widetilde{\C}^\ast$-valued function  on $\Omega$, provided by $x \mapsto f(x)$. However, just as in the case of Colombeau generalized functions, $f$ is not determined by this mapping on $\Omega$, in the sense that there are non-identically zero generalized functions with $f(x)=0$ for all $x\in\Omega$. In fact, the same well-known examples from classical Colombeau theory work also in our setting to prove this assertion (see e.g. \cite[Ex. 1.2.43 (iv)]{GGK}). This problem can be overcome by introducing point values of generalized functions also at generalized points. Naturally, in general $f(x)$ cannot be defined for arbitrary $x\in\widetilde{\Omega}^{\ast}$ because $(f_{\varepsilon}(x_{\varepsilon}))_{\varepsilon}$ may not even be an element of 
$\mathbb{C}^{\ast}_{\mathcal{M}}$ for arbitrary $(x_{\varepsilon})_{\varepsilon}\in\Omega^{\ast}_{\mathcal{M}}$. On the other hand, $(f_{\varepsilon}(x_{\varepsilon}))_{\varepsilon}\in\mathbb{C}^{\ast}_{\mathcal{M}}$ if the net $(x_{\varepsilon})_{\varepsilon}$ belongs to the so-called subset of compactly supported points of $\widetilde{\Omega}^\ast$, that is,
\[ \widetilde{\Omega}^\ast_c = \{ x = [(x_\varepsilon)_\varepsilon] \in \widetilde{\Omega}^\ast \, : \, (\exists K \Subset \Omega)(\exists \varepsilon_0 > 0)(\forall \varepsilon \in (0, \varepsilon_0))(x_\varepsilon \in K) \}. \]
We can therefore define the point value of $f\in \Gss(\Omega)$ at $x\in\widetilde{\Omega}^\ast_c$ as $f(x):=[(f_\varepsilon(x))_\varepsilon]$; obviously, the point value $f(x)$ does not depend on the representative of $f$, neither does it on the representative of $x$, as the mean value theorem implies. The next proposition is a pointwise characterization of $\ast$-generalized functions; it shows that every $f\in\mathcal{G}^{\ast}(\Omega)$ can be associated with the pointwise defined mapping $f:\widetilde{\Omega}_{c}^\ast\to\widetilde{\mathbb{C}}^{\ast}$ in a  one-to-one fashion.
 \begin{proposition}\label{pointwise} One has $f = 0$ in $\Gss(\Omega)$ if and only if $f(x) = 0$ in $\widetilde{\C}^\ast$ for all $x \in \widetilde{\Omega}^\ast_c$.
 \end{proposition}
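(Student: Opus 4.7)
The forward implication is immediate from the null characterization, Proposition \ref{nullchar}. If $f = [(f_\varepsilon)_\varepsilon] = 0$, then $(f_\varepsilon)_\varepsilon \in \Ns(\Omega)$, and for any $x = [(x_\varepsilon)_\varepsilon] \in \widetilde{\Omega}^\ast_c$, picking $K \Subset \Omega$ with $x_\varepsilon \in K$ eventually gives $|f_\varepsilon(x_\varepsilon)| \leq \sup_{y \in K} |f_\varepsilon(y)|$, which has the required decay. Hence $(f_\varepsilon(x_\varepsilon))_\varepsilon \in \mathbb{C}^\ast_{\mathcal{N}}$, i.e.\ $f(x) = 0$ in $\widetilde{\C}^\ast$.

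For the converse, fix $K \Subset \Omega$. By continuity of each $f_\varepsilon$ and compactness of $K$, for every $\varepsilon \in (0,1]$ we may choose $x_\varepsilon \in K$ with $|f_\varepsilon(x_\varepsilon)| = \sup_{y \in K} |f_\varepsilon(y)|$. Since $(x_\varepsilon)_\varepsilon$ is valued in the compact set $K$, the class $x := [(x_\varepsilon)_\varepsilon]$ defines an element of $\widetilde{\Omega}^\ast_c$. In the Beurling case, the hypothesis $f(x) = 0$ says $(f_\varepsilon(x_\varepsilon))_\varepsilon \in \mathbb{C}^{(M_p)}_{\mathcal{N}}$, which means $|f_\varepsilon(x_\varepsilon)| = O(e^{-M(k/\varepsilon)})$ for every $k > 0$, and this is exactly the $0$-th order estimate needed to invoke Proposition \ref{nullchar} and conclude $(f_\varepsilon)_\varepsilon \in \Nf(\Omega)$.

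The Roumieu case is more subtle because the quantifier on $k$ is existential both in $\mathbb{C}^{\{M_p\}}_{\mathcal{N}}$ and in the conclusion of Proposition \ref{nullchar}, and a priori the $k$ extracted from the hypothesis depends on the chosen net $(x_\varepsilon)_\varepsilon$; this is the main obstacle. The resolution is to pass to the projective-type description given by Lemma \ref{projective1}$(ii)$, which rephrases both the hypothesis and the desired conclusion in universally quantified form. Concretely, fix $r_j \in \mathcal{R}$ and carry out the selection of $x_\varepsilon$ as above; by Lemma \ref{projective1}$(ii)$ applied to $g(t) = |f_{1/t}(x_{1/t})|$, the assumption $(f_\varepsilon(x_\varepsilon))_\varepsilon \in \mathbb{C}^{\{M_p\}}_{\mathcal{N}}$ yields $|f_\varepsilon(x_\varepsilon)| = O(e^{-M_{r_j}(1/\varepsilon)})$, so $\sup_{y\in K}|f_\varepsilon(y)| = O(e^{-M_{r_j}(1/\varepsilon)})$. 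Since $r_j \in \mathcal{R}$ is arbitrary, Lemma \ref{projective1}$(ii)$ applied in the reverse direction to the function $t \mapsto \sup_{y \in K} |f_{1/t}(y)|$ produces a single $k > 0$ (depending on $K$ but not on any net) such that $\sup_{y \in K}|f_\varepsilon(y)| = O(e^{-M(k/\varepsilon)})$. Proposition \ref{nullchar} then concludes $(f_\varepsilon)_\varepsilon \in \Ne(\Omega)$, and hence $f = 0$ in $\Gss(\Omega)$.
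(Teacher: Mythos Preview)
Your argument is correct, but it follows a genuinely different route from the paper. The paper argues the converse by contradiction: assuming $f\neq 0$, it extracts from the failure of the $0$-th order estimate in Proposition~\ref{nullchar} a compact $K$, a sequence $\varepsilon_n\searrow 0$, and points $x_n\in K$ witnessing $|f_{\varepsilon_n}(x_n)|\geq e^{-M(k/\varepsilon_n)}$ (respectively $\geq e^{-M(1/(n\varepsilon_n))}$), then pieces the $x_n$ into a single compactly supported generalized point $x$ with $f(x)\neq 0$. Your approach is instead direct: for each $K$ you build a single maximizing net $(x_\varepsilon)_\varepsilon$ with $|f_\varepsilon(x_\varepsilon)|=\sup_{y\in K}|f_\varepsilon(y)|$, and read off the $0$-th order estimate from $f(x)=0$. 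This is arguably cleaner and avoids the subsequence bookkeeping.

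That said, your Roumieu case is overworked. The ``obstacle'' you describe --- that the $k$ extracted from the hypothesis might depend on the net --- is not present in your own setup: for a fixed $K$ there is exactly one net $(x_\varepsilon)_\varepsilon$, and $\sup_{y\in K}|f_\varepsilon(y)|=|f_\varepsilon(x_\varepsilon)|$ holds \emph{identically} as a function of $\varepsilon$. Thus the single $k>0$ produced by the hypothesis $(f_\varepsilon(x_\varepsilon))_\varepsilon\in\mathbb{C}^{\{M_p\}}_{\mathcal{N}}$ is already the $k$ required by Proposition~\ref{nullchar} for that $K$, and the Roumieu quantifier structure ($\forall K\,\exists k$) matches perfectly. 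The passage through Lemma~\ref{projective1}$(ii)$ is correct but entirely redundant: you are translating an equality of functions into $\mathcal{R}$-language and then translating it back.
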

 \begin{proof}
The direct implication is obvious. Conversely, suppose that $f=[(f_{\varepsilon})_{\varepsilon}]$ satisfies $f(x) = 0$ for all $x \in \widetilde{\Omega}^\ast_c$. Suppose $f \neq 0$ in $\Gss(\Omega)$. By Proposition \ref{nullchar}, one could find $K \Subset \Omega$, $k > 0$, and sequences  $\varepsilon_n \searrow 0$ and $x_n \in K$ ($K \Subset \Omega$ and sequences  $\varepsilon_n \searrow 0$ and $x_n \in K$) such that
 \[ |f_{\varepsilon_n}(x_n)| \geq e^{-M(k/\varepsilon_n)} \qquad (|f_{\varepsilon_n}(x_n)| \geq e^{-M(1/(n\varepsilon_n))}), \qquad \forall n \in \N. \]
 For $\varepsilon > 0$, we set $x_\varepsilon = x_k$ if $\varepsilon_{k+1} < \varepsilon \leq \varepsilon_k$. Hence $x = [(x_\varepsilon)_\varepsilon] \in \widetilde{\Omega}^\ast_c$ and, by construction, $f(x) \neq 0$ in $\widetilde{\C}^\ast$.
  \end{proof}

\section{Embedding of $M_p$-ultradistributions} \label{section-embedding}
 We shall now embed ${\Ds}'(\Omega)$ into $\Gss(\Omega)$ in such a way that the multiplication of functions from $\Es(\Omega)$ is preserved. We shall do so in several steps. We first construct an embedding of ${\Es}'(\Omega)$ into the ideal $\Gss_{c}(\Omega)$ by means of convolution with a suitable mollifier and then use the sheaf-theoretic properties of $\Gss(\Omega)$ (Proposition \ref{sheaf}) to extend the embedding to the whole space ${\Ds}'(\Omega)$.
  
We prepare the ground with a discussion about the properties of the mollifiers to be employed in our embedding of ${\Es}'(\Omega)$ into $\Gss_{c}(\Omega)$. Let $N_p$ be a weight sequence with associated function $N$. We always assume that $N_p$ fulfills $(M.1)$ and $(M.3)'$.  We fix an even function $\phi \in \Ft^{-1}(\mathcal{D}^{(N_p)}(\R^d))$ with the following properties
\[
\widehat{\phi}(x) = 1, \quad \mbox{for }|x| \leq 1, \qquad \widehat{\phi}(x) = 0, \quad \mbox{for } |x| \geq 2.
 \]
The net
\[\phi_\varepsilon(x) = \frac{1}{\varepsilon^d}\phi\left(\frac{x}{\varepsilon} \right), \qquad \varepsilon \in (0,1],\]
is called a $(N_p)$-\emph{net of mollifiers}. Besides $(M.1)$ and $(M.3)'$, we also impose the following assumption on $N_p$ relating the growths of $M$ and $N$: For each $l > 0$
\begin{equation}
\label{eqass3N}
 2M(t) \leq N(l t) + C, \qquad \forall t \geq 0,
 \end{equation}
 for some $C=C_{l} > 0$. The existence of such sequences is ensured by some results by Roumieu. More precisely,
\begin{lemma}\label{Roum}
It is always possible to select a weight sequence $N_p$ with the properties $(M.1)$, $(M.3)'$, and $(\ref{eqass3N})$.
\end{lemma}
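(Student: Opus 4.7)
The plan has two parts: first, translate (\ref{eqass3N}) into an inequality at the level of weight sequences; second, invoke a classical existence theorem going back to Roumieu for such a comparison sequence.

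For the translation, I would use the inversion formula $M_p = \sup_{t > 0}t^p e^{-M(t)}$, valid under $(M.1)$, together with the observation that the weight sequence $\tilde M_p := \inf_{q + r = p}M_q M_r$ has associated function exactly $2M(t)$, since
\[
e^{2M(t)} = \Big(\sup_p t^p/M_p\Big)^2 = \sup_{p,q}\frac{t^{p+q}}{M_p M_q} = \sup_n \frac{t^n}{\tilde M_n}.
\]
This yields the equivalence between (\ref{eqass3N}) and the sequence inequality $N_p \leq e^{C_l}l^p \tilde M_p$ for all $p$ and $l > 0$. Since $(M.2)$ in the form $M_p M_q \geq M_{p+q}/(AH^{p+q})$ gives $\tilde M_p \geq M_p/(AH^p)$, while trivially $\tilde M_p \leq M_p$, the sequences $\tilde M_p$ and $M_p$ coincide up to a geometric factor; hence the condition reduces to the Beurling relation $N_p \prec M_p$ introduced in Section \ref{preli}.

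For the existence, I would construct $N_p$ explicitly from $M_p$. Let $n_p := M_p/M_{p-1}$, which is non-decreasing by $(M.1)$ and satisfies $\sum_p 1/n_p < \infty$ by $(M.3)'$. The standard Abel--Dini lemma then produces a non-decreasing sequence $r_p \nearrow \infty$ with $\sum_p r_p/n_p < \infty$; a concrete choice is $r_p := 1/\sqrt{T_p}$, where $T_p := \sum_{k \geq p}1/n_k$, using the telescoping estimate $(T_p - T_{p+1})/\sqrt{T_p} \leq 2(\sqrt{T_p} - \sqrt{T_{p+1}})$. Setting $N_p := M_p/\prod_{k=1}^p r_k$, one has $(M.3)'$ via $N_{p-1}/N_p = r_p/n_p$ and $N_p \prec M_p$ via $N_p/M_p = \bigl(\prod_{k \leq p}r_k\bigr)^{-1}$, which tends to $0$ faster than any geometric rate because $r_k \to \infty$.

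The hard step, and the main obstacle, is ensuring log-convexity of $N_p$, equivalently monotonicity of $n_p/r_p$, jointly with $r_p \to \infty$ and $\sum r_p/n_p < \infty$. The rapid growth of $n_p$ forced by $(M.3)'$ gives enough slack to slow $r_p$ down if necessary; alternatively, one may replace $N_p$ by its log-convex minorant and verify that $(M.3)'$ and $N_p \prec M_p$ survive. A complete construction of this flavor appears in Roumieu's original paper \cite{Roumieu} and in Komatsu's treatment \cite{Komatsu}, which may be invoked directly to finish the argument.
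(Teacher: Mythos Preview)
Your proposal is correct and follows essentially the same route as the paper. Both arguments identify the sequence $\tilde M_p=\min_{q+r=p}M_qM_r$ (the paper's $M'_p$) as having associated function $2M$, reduce (\ref{eqass3N}) to the relation $N_p\prec \tilde M_p$, and then invoke Roumieu's classical existence result for a log-convex, non-quasianalytic minorant; the only cosmetic difference is that you take the extra step of using $(M.2)$ to pass from $\tilde M_p$ to $M_p$, whereas the paper applies Roumieu's result directly to $M'_p$ (which already satisfies $(M.1)$ and $(M.3)'$ without appeal to $(M.2)$).
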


\begin{proof}
Set $ M_p' = \min_{q\leq p} M_qM_{p-q}$. A lemma by Roumieu \cite[Lemm. 3.5]{Komatsu} tells that $M'_p$ has associated function $2M$ and in fact satisfies $(M.1)$.  It is clear that $(M'_p)$ satisfies $(M.3)'$ (cf. \eqref{assM.3}). Furthermore, another result by Roumieu \cite[p.66]{Roumieu} yields the existence of a weight sequence $N_p$ satisfying $(M.1)$, $(M.3)'$, and $N_p \prec M'_p$; \cite[Lemma 3.10]{Komatsu} then gives (\ref{eqass3N}).
\end{proof}

We are ready to embed ${\Es}'(\Omega)$ into $\Gss_c(\Omega)$. First notice that $\phi_{\varepsilon}$ is an entire function, thus, the convolution $f\ast \phi_{\varepsilon}$ is well defined for $f\in {\Es}'(\Omega)$.

\begin{proposition}\label{embedding} The mapping
\begin{equation}
\label{eq embedding compact}
\iota_c: {\Es}'(\Omega) \rightarrow \Gss_{c}(\Omega):\quad f \mapsto \iota_c(f) = [((f \ast \phi_\varepsilon)_{|\Omega})_\varepsilon],
\end{equation}
is a linear embedding. Furthermore, $\iota_{c|\Ds(\Omega)} = \sigma$, where $\sigma$ is the constant embedding  
$(\ref{eqconstantembedding})$.
\end{proposition}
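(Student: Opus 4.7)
The plan is to verify four claims in sequence: (i) moderateness of $((f\ast\phi_\varepsilon)|_\Omega)_\varepsilon$, i.e.\ membership in $\Esc(\Omega)$; (ii) equivalence modulo $\Ns(\Omega)$ to a net with uniformly compact support; (iii) coincidence $\iota_c = \sigma$ on $\Ds(\Omega)$; and (iv) injectivity of $\iota_c$. Thanks to Proposition \ref{nullchar}, all negligibility claims reduce to zeroth-order bounds of type $|f\ast\phi_\varepsilon(x)| = O(e^{-M(k/\varepsilon)})$, although derivatives of $\phi_\varepsilon$ still enter through the continuity of $f$ on $\Es$.

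For step (i), I would start from the continuity estimate $|\langle f,\psi\rangle|\leq C_L\|\psi\|_{\mathcal{E}^{\ast,L}(K')}$ (with $L$ existential in the Beurling case and universal in the Roumieu case, on a compact $K'\Subset\Omega$ containing $\operatorname{supp} f$), combined with the coarse bound
\[ |\partial^\gamma \phi_\varepsilon(z)|\leq C\varepsilon^{-d}(2/\varepsilon)^{|\gamma|}, \]
obtained from $\phi^{(\gamma)} = \Ft^{-1}((-i\xi)^\gamma\widehat{\phi})$ and $\operatorname{supp}\widehat{\phi}\subseteq\{|\xi|\leq 2\}$. Applying this to $\partial^\alpha(f\ast\phi_\varepsilon)(x) = \langle f(y),(\partial^\alpha\phi_\varepsilon)(x-y)\rangle$ and using the identity $\sup_p t^p/M_p = e^{M(t)}$ to close the sups over multiindices in $\|\cdot\|_{\mathcal{E}^{\ast,L}(K')}$ and $\|\cdot\|_{\Eh(K)}$ yields
\[ \|f\ast\phi_\varepsilon\|_{\Eh(K)} \leq C'\varepsilon^{-d}e^{M(2/(\varepsilon L)) + M(2/(\varepsilon h))}, \]
after which \eqref{assM.2} collapses the two exponentials into a single $e^{M(k/\varepsilon)}$, with $k$ chosen in the pattern dictated by $\ast$.

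For (ii) and (iii) the crucial input is the $(N_p)$-refined Paley-Wiener bound $|\phi^{(\beta)}(w)|\leq C_{L,\beta}e^{-N(|w|/L)}$ valid for every $L>0$, coming from $\widehat{\phi}\in\mathcal{D}^{(N_p)}$. For (ii), applying the continuity estimate to $\phi_\varepsilon(x-\cdot)$ when $x$ lies at distance at least $\delta$ from $\operatorname{supp} f$ produces a bound of shape $C\varepsilon^{-d}e^{-N(\delta/(\varepsilon L'))+M(2/(\varepsilon L))}$; the compatibility hypothesis \eqref{eqass3N} absorbs the $M$-term into the $N$-term to yield $e^{-M(k/\varepsilon)}$-decay. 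Multiplying by a cutoff $\chi\in\Ds(\Omega)$ equal to $1$ on a neighborhood of $\operatorname{supp} f$ then exhibits $\iota_c(f)$ as the class of a compactly supported representative, so $\iota_c(f)\in\Gss_c(\Omega)$. For (iii), the Paley-Wiener bound $|\widehat{f}(\xi)|\leq C_\ell e^{-M(|\xi|/\ell)}$ for $f\in\Ds(\Omega)$, combined with $1-\widehat{\phi}(\varepsilon\xi) = 0$ on $|\xi|\leq 1/\varepsilon$, reduces the matter to estimating $\int_{|\xi|\geq 1/\varepsilon} e^{-M(|\xi|/\ell)}\,\dxi$; iterating \eqref{assM.2} to obtain $M(|\xi|/\ell)\geq M(|\xi|/\ell')+M(|\xi|/\ell')-\log A$ for suitable $\ell'$ lets me factor out $e^{-M(1/(\varepsilon\ell'))}$ while leaving an absolutely convergent integral, giving $(f-f\ast\phi_\varepsilon)_\varepsilon\in\Ns(\Omega)$.

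Finally, for (iv), if $\iota_c(f)=0$ then $(f\ast\phi_\varepsilon)|_\Omega\to 0$ uniformly on compacts; pairing with $\psi\in\Ds(\Omega)$ and using $\langle f\ast\phi_\varepsilon,\psi\rangle = \langle f,\phi_\varepsilon\ast\psi\rangle$ together with $\phi_\varepsilon\ast\psi\to\psi$ in $\Es$ (which follows from the Fourier computation of step (iii) now applied to $\psi$) forces $\langle f,\psi\rangle = 0$ for every $\psi$, hence $f=0$ in ${\Ds}'(\Omega)$. The main obstacle I anticipate is the bookkeeping of four simultaneous quantifier patterns (on $h$, $L$, $\ell$, $k$) uniformly across the Beurling and Roumieu cases; the assumption \eqref{eqass3N} is precisely the structural device that lets the $(N_p)$-decay of the mollifier dominate the $(M_p)$-growths hard-wired into $\Esc$ and $\Ns$.
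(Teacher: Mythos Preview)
Your proposal is correct and tracks the paper's argument closely: moderateness via the continuity estimate for $f$ applied to translates of $\phi_\varepsilon$, negligibility of $f\ast\phi_\varepsilon - f$ for $f\in\Ds(\Omega)$ via the Fourier-side cutoff property $\widehat{\phi}(\varepsilon\xi)=1$ on $|\xi|\leq 1/\varepsilon$ combined with \eqref{assM.2}, and injectivity via $\langle f\ast\phi_\varepsilon,\psi\rangle\to\langle f,\psi\rangle$. Your step~(i) differs only cosmetically from the paper, which packages the same Fourier estimate as a bound on $\|\phi_\varepsilon\|_{\El(K-K')}$ rather than going through the pointwise derivative bound $|\partial^\gamma\phi_\varepsilon|\leq C\varepsilon^{-d}(2/\varepsilon)^{|\gamma|}$.

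The one substantive structural difference is your step~(ii). You establish $\iota_c(f)\in\Gss_c(\Omega)$ by invoking the compatibility hypothesis \eqref{eqass3N} to show $(f\ast\phi_\varepsilon)_\varepsilon$ is negligible away from $\operatorname{supp} f$. The paper deliberately postpones this: it explicitly remarks that the proof of Proposition~\ref{embedding} uses only $(M.3)'$ on $N_p$, and isolates the support-preservation argument (which genuinely needs \eqref{eqass3N}) as the separate Proposition~\ref{support} and its preparatory Lemma~\ref{PWS}. Within the proof of Proposition~\ref{embedding} itself the paper only derives the reverse inclusion $\operatorname{supp} f\subseteq\operatorname{supp}\iota_c(f)$ from injectivity on subdomains --- which does \emph{not} by itself yield compact support, so the paper's claim ``It follows that $\iota_c(\mathcal{E}^\ast{}'(\Omega))\subset\Gss_c(\Omega)$'' at that point is in fact only justified once Proposition~\ref{support} is in hand. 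Your approach has the advantage of being self-contained; the paper's has the advantage of isolating exactly which hypothesis on $N_p$ is responsible for which conclusion.
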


\begin{proof} 
Let $f \in {\Es}'(\Omega)$. We first need to check that $(f\ast\phi_{\varepsilon})_{\varepsilon}\in \mathcal{E}^{\ast}_{\mathcal{M}}$.  Let $K \Subset  \Omega$ and $h > 0$ be arbitrary. By continuity of $f$  (cf. the proof of \cite[Thrm. 6.10]{Komatsu}), we find
\[(\exists K' \Subset \Omega)( \exists k > 0)(\exists C > 0) \qquad ((\exists K' \Subset \Omega)( \forall k > 0)(\exists C > 0))    \]
\[ \| f \ast \phi_\varepsilon \|_{\Eh(K)} \leq C\| \phi_\varepsilon \|_{\El(K-K')}\: , \]
where $ l = \min(h,k)/ H$.
Employing \cite[Lemm. 3.3]{Komatsu}, we have
\begin{align*}
\| \phi_\varepsilon \|_{\El(K-K')} &\leq \frac{1}{(2\pi)^d}  \int_{\R^d} |\widehat{\phi}(\varepsilon \xi)|e^{M(|\xi|/l)} \dxi \\
& \leq \frac{1}{(2\pi\varepsilon)^d} \int_{\R^d} |\widehat{\phi}(t)|e^{M(|t|/(l\varepsilon))} \dt \\
&\leq \frac{1}{(2\pi\varepsilon)^d} \| \widehat{\phi}\|_{L^1}\:e^{M(2/(l\varepsilon))}.
\end{align*}
This proves that $\iota_{c}:{\mathcal{E}^{\ast}}'(\Omega)\to \mathcal{G}^{\ast}(\Omega)$ is a well-defined linear mapping. Suppose that $((f \ast \phi_\varepsilon)_{|\Omega})_\varepsilon \in \Ns(\Omega)$. Given $\psi \in  \Ds(\Omega)$, we have
$\langle f, \psi \rangle = \lim_{\varepsilon \to 0^+} \langle f \ast \phi_\varepsilon, \psi \rangle = 0$, which shows that $\iota_{c}$ is injective. Finally, since $\mathcal{G}^{\ast}$ is a sheaf and, as we have just shown,  $\iota_{c}:\mathcal{E}^{\ast}(\Omega')\to \mathcal{G}^{\ast}(\Omega')$ is injective on any $\Omega'\subseteq \Omega$, we obtain $\operatorname*{supp}f \subseteq \operatorname*{supp}\iota_{c}( f)$. It follows that  $\iota(\mathcal{E}^{\ast}(\Omega))\subset \mathcal{G}^{\ast}_{c}(\Omega)$, as claimed in (\ref{eq embedding compact}). 

Let now $f \in \Dh(K)$. Using \cite[Lemm. 3.3]{Komatsu} once again,
\begin{align*}
|(f\ast \phi_\varepsilon)(x) - f(x)| &\leq |\Ft^{-1}(\widehat{f}\widehat{\phi_\varepsilon}  - \widehat{f})(x)| 
\leq \frac{1}{(2\pi)^d}\int_{\R^d} |\widehat{f}(\xi)||\widehat{\phi}(\varepsilon \xi) - 1| \dxi \\
&\leq \frac{|K|}{(2\pi)^d} \|f\|_{\Eh(K)} (\|\widehat{\phi}\|_{L^\infty} + 1) \int_{|\xi|\geq 1/\varepsilon} e^{-M(|\xi| / (\sqrt{d}h))} \dxi,
\end{align*}
where $|K|$ denotes the Lebesgue measure of $K$. Thus, 
\begin{equation}
 |(f\ast \phi_\varepsilon) (x)- f(x)| \leq C\|f\|_{\Eh(K)}e^{-M(1/ (\sqrt{d}hH\varepsilon))}  
\label{ongelijkheid}
\end{equation}
where
\[C = \frac{A|K|}{(2\pi)^d}   (\|\widehat{\phi}\|_{L^\infty} + 1)\int_{\R^d} e^{-M(|\xi| / (\sqrt{d}hH))} \dxi < \infty,\]
and $A,H$ are the constants occurring in \eqref{assM.2}.
That $\iota_{c}(f)=\sigma(f)$ now follows from Proposition \ref{nullchar}.
\end{proof}

\begin{corollary}\label{coro} 
Let $f \in {\Es}'(\Omega)$, $(f_\varepsilon)_\varepsilon$ a representative of $\iota_c(f)$, and $K \Subset \Omega$. Then
\[(\exists h > 0)(\exists C > 0)( \exists \varepsilon_0 > 0)(\forall \psi \in \Df(K))( \forall \varepsilon < \varepsilon_0)\]
\[| \langle f -f_\varepsilon, \psi \rangle | \leq C\| \psi\|_{\Eh(K)}e^{-M(1/ (\sqrt{d} H^2h\varepsilon))}.\]
\end{corollary}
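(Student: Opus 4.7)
The plan is to reduce the claim to a seminorm estimate for the difference $\psi - \psi \ast \phi_\varepsilon$, which can then be paired with $f$ via its continuity on $\Es(\Omega)$. Since $\phi$ is even, a transposition argument gives $\langle f_\varepsilon, \psi\rangle = \langle f\ast \phi_\varepsilon, \psi\rangle = \langle f, \psi \ast \phi_\varepsilon\rangle$ for every $\psi \in \Df(K)$, so $\langle f - f_\varepsilon, \psi\rangle = \langle f, \psi - \psi \ast \phi_\varepsilon\rangle$. By the continuity of $f$ on $\Es(\Omega)$ there exist $K' \Subset \Omega$ and $l, C_0 > 0$ (with $l$ arbitrary in the Roumieu case) such that $|\langle f, \varphi\rangle| \leq C_0 \|\varphi\|_{\mathcal{E}^{\ast, l}(K')}$ for every $\varphi \in \Es(\Omega)$. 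It therefore suffices to prove that, for a suitable $h > 0$ with $l = \sqrt{d}Hh$,
\begin{equation*}
\|\psi - \psi \ast \phi_\varepsilon\|_{\mathcal{E}^{\ast, l}(K')} \leq C_1 \|\psi\|_{\mathcal{E}^{\ast, h}(K)}\, e^{-M(1/(\sqrt{d}H^{2} h \varepsilon))}
\end{equation*}
for $\varepsilon$ small and $C_1$ independent of $\psi$.

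The core step is to upgrade the pointwise bound (\ref{ongelijkheid}) from the proof of Proposition \ref{embedding} so that it controls all derivatives simultaneously. Starting from $\psi - \psi \ast \phi_\varepsilon = \Ft^{-1}(\widehat{\psi}(1 - \widehat{\phi_\varepsilon}))$ and differentiating under the inverse Fourier transform, the $\alpha$-th derivative is bounded by an integral over the region $|\xi| \geq 1/\varepsilon$ of $|\xi|^{|\alpha|}|\widehat{\psi}(\xi)|$ times a bounded factor. I would then use the Paley--Wiener estimate $|\widehat{\psi}(\xi)| \leq C|K|\,\|\psi\|_{\mathcal{E}^{\ast,h}(K)} e^{-M(|\xi|/(\sqrt{d}h))}$ of \cite[Lemm. 3.3]{Komatsu} and trade the polynomial factor $|\xi|^{|\alpha|}$ for an exponential by means of $|\xi|^{|\alpha|} \leq (\sqrt{d}hH)^{|\alpha|} M_{|\alpha|} e^{M(|\xi|/(\sqrt{d}hH))}$, which follows from the very definition of $M$. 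A first application of $(M.2)$ in the form $e^{M(t/H)-M(t)} \leq A\,e^{-M(t/H)}$ collapses the two exponentials to a single $Ae^{-M(|\xi|/(\sqrt{d}hH))}$, and a second application combined with $|\xi|\geq 1/\varepsilon$ yields
\begin{equation*}
e^{-M(|\xi|/(\sqrt{d}hH))} \leq A\, e^{-M(|\xi|/(\sqrt{d}hH^{2}))}\, e^{-M(1/(\sqrt{d}hH^{2}\varepsilon))},
\end{equation*}
whose $\xi$-integral is finite thanks to $(M.3)'$ and (\ref{assM.3}). Dividing by $(\sqrt{d}hH)^{|\alpha|} M_{|\alpha|}$ and taking suprema over $\alpha$ and $x \in K'$ produces the displayed seminorm estimate; combining it with the continuity estimate for $f$ and choosing $h = l/(\sqrt{d}H)$ closes the argument.

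The only real difficulty is the bookkeeping of the two factors of $H$ introduced by the successive applications of $(M.2)$: the first is needed to absorb the polynomial growth $|\xi|^{|\alpha|}$ into the associated function $M$, the second to extract the $\varepsilon$-dependent decay $e^{-M(1/(\sqrt{d}H^{2}h\varepsilon))}$. This is the same mechanism already exploited in deriving (\ref{ongelijkheid}), and it treats the Beurling and Roumieu cases uniformly because the role of $l$ is purely existential in the former and universal in the latter; in both cases the promised $h > 0$ is simply $l/(\sqrt{d}H)$.
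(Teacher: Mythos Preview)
Your argument is correct and follows essentially the same strategy as the paper: transpose the convolution to write $\langle f - f_\varepsilon, \psi\rangle$ in terms of $\langle f, \psi - \psi\ast\phi_\varepsilon\rangle$, establish a uniform seminorm estimate for $\psi - \psi\ast\phi_\varepsilon$, and conclude via the continuity of $f$. The only cosmetic difference is that the paper obtains the derivative bounds by applying inequality~\eqref{ongelijkheid} to $\psi^{(\alpha)}$ (with parameter $Hh$) together with the $(M.2)$-inequality $\|\psi^{(\alpha)}\|_{\mathcal{E}^{\{M_p\},Hh}(K)} \leq A(Hh)^{|\alpha|}M_{|\alpha|}\|\psi\|_{\Eh(K)}$, whereas you unpack the Fourier integral from scratch and invoke $(M.2)$ twice; both routes yield the same seminorm estimate with the same decay $e^{-M(1/(\sqrt{d}H^2h\varepsilon))}$.

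One small point of bookkeeping: your identity $\langle f_\varepsilon,\psi\rangle = \langle f\ast\phi_\varepsilon,\psi\rangle$ assumes the specific representative $(f\ast\phi_\varepsilon)_{|\Omega}$, while the statement allows an arbitrary representative of $\iota_c(f)$. The discrepancy is a net in $\Ns(\Omega)$, whose pairing with $\psi$ is trivially bounded by $C\|\psi\|_{\Eh(K)}e^{-M(k/\varepsilon)}$ for suitable $k$ (for all $k$ in the Beurling case, for some $k$ in the Roumieu case); absorbing this into the final estimate may require enlarging $h$ in the Roumieu case, which is harmless since the claim is existential in $h$. The paper handles this with the phrase ``and the definition of the ideal $\Ns(\Omega)$''.
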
 
\begin{proof}
Condition $(M.2)$ implies that for all $h>0$
\[ \| \psi^{(\alpha)}\|_{\mathcal{E}^{\{M_p\},Hh}(K)} \leq AM_{|\alpha|}(Hh)^{|\alpha|}\|\psi\|_{\mathcal{E}^{\{M_p\},h}(K)}, \qquad \forall \psi \in \Df(K).\]
Hence, by applying inequality \eqref{ongelijkheid} to $\psi^{(\alpha)}$ instead of $f$, we find that
\[\sup_{\substack{x \in \R^d \\ \alpha \in \N^d}} \frac{|(\psi \ast \phi_\varepsilon)^{(\alpha)}(x) - \psi^{(\alpha)}(x)|}{(Hh)^{|\alpha|}M_{|\alpha|}} \leq AC \|\psi\|_{\Eh(K)}e^{-M(1/( \sqrt{d}H^2h\varepsilon))}.\]
The rest follows from the continuity of $f$ and the definition of the ideal $\Ns(\Omega)$.
\end{proof}

So far we have only used the assumption $(M.3)'$ on $N_p$. The other two assumptions become now crucial to show that the embedding (\ref{eq embedding compact}) is support preserving. We need an auxiliary lemma in order to establish this fact (which only makes use of the assumptions $(M.1)$ and $(M.3)'$ on $N_p$).
   
\begin{lemma}\label{PWS} Let $K$ be a compact set of $\R^d$. Then, for each $h >0$ there is $C > 0$ such that

\[ |(\widehat{\psi})^{(\alpha)}(\xi)| \leq C(2\max\{|K|,1\})^{|\alpha|}\|\psi \|_{\mathcal{E}^{\{N_p\},h}(K)} e^{-N(|\xi| / (2\sqrt{d}h))}, \qquad \forall \xi \in \R^d,\]
for all $\psi \in \mathcal{D}^{\{N_p\},h}(K)$ and $\alpha \in \N^d$.
\end{lemma}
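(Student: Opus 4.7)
The strategy is to write $(\widehat{\psi})^{(\alpha)}(\xi) = \Ft[(ix)^\alpha\psi](\xi)$ and combine a standard Paley--Wiener exponential decay estimate for Fourier transforms of $\mathcal{D}^{\{N_p\},2h}(K)$-functions with a sharp bound on the $\mathcal{E}^{\{N_p\},2h}(K)$-norm of $(ix)^\alpha\psi$. Throughout, write $R = \max\{|K|,1\}$, where $|K|$ is understood as a bound on $|x|$ for $x \in K$ (so that $|x^\alpha|\leq R^{|\alpha|}$ on $K$).

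For the Paley--Wiener step, pick $j_0$ with $|\xi_{j_0}| = \max_j |\xi_j| \geq |\xi|/\sqrt{d}$. For $\phi \in \mathcal{D}^{\{N_p\},2h}(K)$, iterated integration by parts in $x_{j_0}$ (boundary terms vanish since $\operatorname{supp}\phi\subseteq K$) gives, for every $p \in \N$,
\[ |\xi_{j_0}|^p |\Ft[\phi](\xi)| \leq \int_K |\partial^p_{x_{j_0}}\phi(x)|\,\dx \leq |K|_{\mathrm{Leb}}\,\|\phi\|_{\mathcal{E}^{\{N_p\},2h}(K)}\,(2h)^p N_p. \]
Dividing by $|\xi_{j_0}|^p$ and using $|\xi_{j_0}|\geq |\xi|/\sqrt{d}$, then taking the infimum over $p\in\N$ and using $\sup_p (|\xi|/(2\sqrt{d}h))^p/N_p = e^{N(|\xi|/(2\sqrt{d}h))}$, yields
\[ |\Ft[\phi](\xi)| \leq |K|_{\mathrm{Leb}}\,\|\phi\|_{\mathcal{E}^{\{N_p\},2h}(K)}\,e^{-N(|\xi|/(2\sqrt{d}h))}. \]

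Next, I would invoke the elementary algebra estimate
\[ \|fg\|_{\mathcal{E}^{\{N_p\},2h}(K)} \leq \|f\|_{\mathcal{E}^{\{N_p\},h}(K)}\,\|g\|_{\mathcal{E}^{\{N_p\},h}(K)}, \]
which follows from Leibniz together with the key inequality $N_{|\gamma|} N_{|\beta|-|\gamma|} \leq N_{|\beta|}$. This last inequality is discrete log-convexity in disguise: by $(M.1)$, $\log N_p$ is a convex sequence with $\log N_0 = 0$, hence the discrete convex function $k \mapsto \log N_k + \log N_{l-k}$ on $\{0,\ldots,l\}$ attains its maximum at a boundary point, namely $\log N_l$. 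Applying this with $f=x^\alpha$ and $g=\psi$ reduces the task to bounding $\|x^\alpha\|_{\mathcal{E}^{\{N_p\},h}(K)}$. From $|\partial^\beta x^\alpha(x)| \leq \tfrac{\alpha!}{(\alpha-\beta)!} R^{|\alpha|-|\beta|}$ for $x\in K$ and $\beta\leq\alpha$, and the identity $\tfrac{\alpha!}{(\alpha-\beta)!} = \binom{\alpha}{\beta}\beta! \leq 2^{|\alpha|}\,|\beta|!$, one obtains
\[ \|x^\alpha\|_{\mathcal{E}^{\{N_p\},h}(K)} \leq (2R)^{|\alpha|}\sup_{p \in \N}\frac{p!}{(Rh)^p\, N_p}. \]
The supremum on the right is finite and depends only on $K$ and $h$: using the non-decreasing ratios $r_k=N_k/N_{k-1}$ guaranteed by $(M.1)$, condition $(M.3)'$ ($\sum 1/r_k <\infty$) forces $r_k/k\to\infty$, and the geometric-mean representation $(N_p/p!)^{1/p} = \prod_{k=1}^p (r_k/k)^{1/p}$ then yields $(N_p/p!)^{1/p}\to\infty$, which is equivalent to the finiteness of the displayed supremum for every fixed $Rh>0$.

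Assembling the three estimates produces
\[ |(\widehat{\psi})^{(\alpha)}(\xi)| \leq |K|_{\mathrm{Leb}}\cdot C_0(K,h)\, (2R)^{|\alpha|}\,\|\psi\|_{\mathcal{E}^{\{N_p\},h}(K)}\,e^{-N(|\xi|/(2\sqrt{d}h))}, \]
establishing the claim with $C := |K|_{\mathrm{Leb}}\cdot C_0(K,h)$. The main obstacle is the polynomial norm bound: one must carry out the Leibniz computation carefully to extract exactly the factor $(2R)^{|\alpha|}$—the ``$2$'' arising from $\binom{\alpha}{\beta}\leq 2^{|\alpha|}$ and the ``$R$'' from $|x^{\alpha-\beta}|\leq R^{|\alpha|-|\beta|}$—and then invoke the non-quasianalyticity $(M.3)'$ to control the residual supremum in $p$, which otherwise would blow up when $Rh<1$.
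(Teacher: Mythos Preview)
Your proof is correct and follows essentially the same three-step structure as the paper's proof: a Paley--Wiener bound for $\Ft(x^\alpha\psi)$ (the paper cites \cite[Lemm.~3.3]{Komatsu}), the algebra inequality $\|x^\alpha\psi\|_{\mathcal{E}^{\{N_p\},2h}(K)} \leq \|x^\alpha\|_{\mathcal{E}^{\{N_p\},h}(K)}\|\psi\|_{\mathcal{E}^{\{N_p\},h}(K)}$ (the paper cites \cite[Prop.~2.7]{Komatsu}), and the bound on $\|x^\alpha\|_{\mathcal{E}^{\{N_p\},h}(K)}$ using $p!\prec N_p$ (the paper cites \cite[Lemm.~4.1]{Komatsu}). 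The only difference is that you supply self-contained arguments for these three cited facts and explicitly disambiguate the two roles of $|K|$ (Lebesgue measure in the Paley--Wiener step versus a bound on $|x|$ in the polynomial estimate), which the paper leaves implicit.
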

\begin{proof}
By \cite[Lemm. 3.3]{Komatsu}, we have that
\[|(\widehat{\psi})^{(\alpha)}(\xi)| = |\Ft(x^\alpha\psi(x))(\xi)| \leq |K|\|x^\alpha\psi \|_{\mathcal{E}^{\{N_p\},2h}(K)} e^{-N(|\xi| / (2\sqrt{d}h))}, \qquad \forall \xi \in \R^d.\]
Using \cite[Prop 2.7]{Komatsu},
\[\|x^\alpha\psi \|_{\mathcal{E}^{\{N_p\},2h}(K)} \leq \|x^\alpha\|_{\mathcal{E}^{\{N_p\},h}(K)} \|\psi\|_{\mathcal{E}^{\{N_p\},h}(K)}.\]
Set $D=\max\{|K|,1\}$. Since $p! \prec N_p$ (cf. \cite[Lemm. 4.1]{Komatsu}),
\[
\|x^\alpha\|_{\mathcal{E}^{\{N_p\},h}(K)} = \sup_{\substack{x \in K \\ \beta \leq \alpha}} \binom{\alpha}{\beta} \frac{\beta!|x^{\alpha-\beta}|}{h^{|\beta|}N_{|\beta|}}
\leq (2D)^{|\alpha|}\sup_{\beta \leq \alpha} \frac{|\beta|!}{h^{|\beta|}N_{|\beta|}}
\leq C(2D)^{|\alpha|},
\]
for some $C$, depending only on $h$ and $N_p$.
\end{proof}

We then have that the support of $f\in{\Es}'(\Omega)$ coincides with that of $\iota_c(f)$. 

\begin{proposition}\label{support} Let $f \in {\Es}'(\Omega)$. Then, $\operatorname*{supp}\iota_{c}(f) = \operatorname*{supp} f$. 
\end{proposition}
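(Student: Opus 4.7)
The inclusion $\operatorname{supp} f\subseteq\operatorname{supp}\iota_c(f)$ was already asserted in the proof of Proposition \ref{embedding}; alternatively, it may be deduced directly from the weak limit $\langle f,\psi\rangle=\lim_{\varepsilon\to 0^+}\langle f*\phi_\varepsilon,\psi\rangle$ for $\psi\in\mathcal{D}^\ast(\Omega)$ together with Proposition \ref{nullchar}. Hence the substantive content is the reverse inclusion $\operatorname{supp}\iota_c(f)\subseteq\operatorname{supp} f$. My plan is: fix $x_0\in\Omega\setminus\operatorname{supp} f$, pick an open neighborhood $V$ of $x_0$ with $\overline{V}\Subset\Omega$ and $\overline{V}\cap\operatorname{supp} f=\emptyset$, and pick a compact neighborhood $K'$ of $\operatorname{supp} f$ in $\Omega$ with $K'\cap\overline{V}=\emptyset$, so that $d_0:=\operatorname{dist}(\overline{V},K')>0$. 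By Proposition \ref{nullchar} it then suffices to verify that $\sup_{x\in\overline{V}}|(f*\phi_\varepsilon)(x)|$ satisfies the $\ast$-negligibility estimate.

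Continuity of $f\in{\mathcal{E}^\ast}'(\Omega)$ yields $|(f*\phi_\varepsilon)(x)|\leq C\|\phi_\varepsilon(x-\cdot)\|_{\mathcal{E}^{\ast,h}(K')}$ with the familiar quantifier pattern on $h$ (some $h>0$ for Beurling, every $h>0$ for Roumieu, with $C=C_h$). The task reduces to estimating this seminorm. I would apply Lemma \ref{PWS} to $\widehat{\phi}\in\mathcal{D}^{(N_p)}$ supported in $\{|\xi|\leq 2\}$ and combine it with Fourier inversion plus evenness of $\phi$ to deduce, for every $h_N>0$,
\[ |\phi^{(\alpha)}(u)|\leq C_1 L^{|\alpha|}e^{-N(|u|/(2\sqrt{d}h_N))},\qquad u\in\R^d,\;\alpha\in\N^d, \]
where $L$ depends only on $d$. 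Rescaling via $\phi_\varepsilon^{(\alpha)}(z)=\varepsilon^{-d-|\alpha|}\phi^{(\alpha)}(z/\varepsilon)$, exploiting $|x-y|\geq d_0$ for $x\in\overline{V}$ and $y\in K'$, and maximizing over $\alpha$ through $\sup_p(L/(h\varepsilon))^p/M_p=e^{M(L/(h\varepsilon))}$, one arrives at
\[ \|\phi_\varepsilon(x-\cdot)\|_{\mathcal{E}^{\ast,h}(K')}\leq C_2\,\varepsilon^{-d}\,e^{M(L/(h\varepsilon))}\,e^{-N(d_0/(2\sqrt{d}h_N\varepsilon))}. \]

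The decisive step is to exchange the $M$-growth for $N$-decay using $(\ref{eqass3N})$ in the form $N(t)\geq 2M(t/l)-C_l$, valid for every $l>0$. For a prescribed $k>0$, I would choose $l$ small enough that $d_0/(2\sqrt{d}h_N l)\geq\max(L/h,k)$; then monotonicity of $M$ gives $2M(d_0/(2\sqrt{d}h_N l\varepsilon))\geq M(L/(h\varepsilon))+M(k/\varepsilon)$, and absorbing the polynomial factor $\varepsilon^{-d}$ via $(M.2)$ (which guarantees $\varepsilon^{-d}=o(e^{M(k''/\varepsilon)})$ for every $k''>0$) yields $\sup_{x\in\overline{V}}|(f*\phi_\varepsilon)(x)|=O(e^{-M(k'/\varepsilon)})$ for any $k'<k$. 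The main subtlety is quantifier bookkeeping between the two cases: in the Beurling case $h$ is fixed while $k$ ranges over all positives, so $l$ must shrink with $k$; in the Roumieu case $C_h$ depends on the chosen $h$, but since only a single $k$ is required any fixed $h$ (and corresponding $l$) suffices. Proposition \ref{nullchar} then gives $(f*\phi_\varepsilon)_{|V}\in\mathcal{N}^\ast(V)$, and hence $x_0\notin\operatorname{supp}\iota_c(f)$.
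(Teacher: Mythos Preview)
Your proof is correct and follows essentially the same route as the paper's: both use continuity of $f$ to reduce to a seminorm estimate of $\phi_\varepsilon$ on a set bounded away from the origin, invoke Lemma~\ref{PWS} (applied to $\widehat{\phi}\in\mathcal{D}^{(N_p)}$) to extract the $e^{-N(\cdot/\varepsilon)}$ decay, and then trade this for the required $e^{-M(\cdot/\varepsilon)}$ decay via \eqref{eqass3N} before concluding with Proposition~\ref{nullchar}. The only cosmetic difference is that the paper packages the estimate as $\|\phi_\varepsilon\|_{\Ek(K-K')}$ on the difference set rather than $\|\phi_\varepsilon(x-\cdot)\|_{\mathcal{E}^{\ast,h}(K')}$ pointwise in $x$, and absorbs the $\varepsilon^{-d}$ factor without comment; your more explicit quantifier bookkeeping is fine.
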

\begin{proof} We have already proved the inclusion $\operatorname{supp}f \subseteq\operatorname{supp}\iota_{c}( f)$ within the proof of Proposition \ref{embedding}.
So, we should show here that  $\operatorname*{supp}\iota_{c}(f) \subseteq \operatorname{supp} f$. Let $K \Subset \Omega \backslash \operatorname*{supp} f$. Choose $K'\Subset\Omega$ such that a neighborhood of $\operatorname*{supp} f$ is contained in $K'$ and $K'\cap K=\emptyset$. Set $\delta:= d(K,K') > 0$.  By the continuity of $f$, we can find $k>0$ and $C>0$ (for each $k>0$ there is $C>0$) such that 
\[
|(f \ast \phi_\varepsilon)(x)| \leq C \|\phi_\varepsilon \|_{\Ek(K-K')}\: , \quad \forall x\in K.
\]
Set $\psi = \widehat{\phi} \in \mathcal{D}^{(N_p)}(\R^d)$, so that $\widehat{\psi}(\xi) = (2\pi)^d\phi(\xi)$. Lemma \ref{PWS} implies that 
\begin{align*}
 \| \phi_\varepsilon \|_{\Ek(K-K')} &= \frac{1}{(2\pi\varepsilon)^d}\sup_{\substack{u \in K- K' \\ \alpha \in \N^d}} \frac{|\widehat{\psi}^{(\alpha)}(-u/\varepsilon)|}{(k\varepsilon)^{|\alpha|}M_{|\alpha|}} \\
&\leq \frac{C'}{(2\pi\varepsilon)^d}\|\psi \|_{\mathcal{E}^{\{N_p\},k/2}(\bar{B}(0,2))}e^{-N(\delta / (\sqrt{d}k\varepsilon)) + M(D/(k\varepsilon))}
\\
&\leq C''e^{- M(D/(k\varepsilon))},
\end{align*}
in view of (\ref{eqass3N}), for some $C',C''$ and $D> 0$. We then obtain that $(\iota_{c}(f))_{|(\Omega \backslash \operatorname*{supp} f)}=0$ from Proposition \ref{nullchar}.
\end{proof}

We have all ingredients we need to realize an embedding of ${\mathcal{D}^{\ast}}'(\Omega)$ into $\mathcal{G}^{\ast}(\Omega)$. It is a standard exercise to show that any support preserving linear embedding between the compactly supported global sections of two fine sheaves on a Hausdorff second countable locally compact topological space can be uniquely extended to a sheaf embedding. In fact, the latter conclusion remains valid if one replaces fineness of the sheaves by the weaker hypothesis of softness \cite[Lemm. 2.3, p. 228]{Komatsu73b}. Summing up all of our results, we have shown:
\begin{theorem}\label{main}
 There is a linear embedding  $\iota = \iota_\Omega: {\Ds}'(\Omega) \rightarrow \Gss(\Omega)$ having the following properties,
\begin{enumerate}
\item[$(i)$] $\iota_{|{\Es}'(\Omega)} = \iota_c.$
\item[$(ii)$] $\iota$ commutes with $\ast$-ultradifferential operators, that is, for all $\ast$-ultradifferential operators $P(D)$
\[ P(D)\iota(f) = \iota(P(D)f), \quad f \in {\Ds}'(\Omega).\]
\item[$(iii)$] $\iota_{|{\Es}(\Omega)}$ coincides with the constant embedding $\sigma.$ Consequently, 
$$
\iota(fg) = \iota(f)\iota(g),\quad f,g \in \Es(\Omega).$$ 
\end{enumerate}
Moreover, the entirety of all $\iota_\Omega: {\Ds}'(\Omega) \rightarrow \Gss(\Omega)$ is a sheaf monomorphism ${\Ds}' \rightarrow \Gss$ on any open subset of $\mathbb{R}^{d}$.
\end{theorem}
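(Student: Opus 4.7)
The plan is to extend $\iota_c$ from compactly supported ultradistributions to all of ${\Ds}'(\Omega)$ by sheaf-theoretic gluing, leveraging the support preservation of $\iota_c$ (Proposition \ref{support}) and the softness of $\Gss$ (Proposition \ref{sheaf}). I would fix a locally finite open cover $\{U_i\}_{i \in I}$ of $\Omega$ by relatively compact open sets together with a subordinate $\Ds$-partition of unity $\{\chi_i\}_{i \in I}$, whose existence is secured by $(M.1)$ and $(M.3)'$. For $f \in {\Ds}'(\Omega)$, each product $\chi_i f$ belongs to ${\Es}'(\Omega)$ with support inside $\overline{U_i}$, so by Proposition \ref{support} the element $\iota_c(\chi_i f) \in \Gss_c(\Omega)$ is likewise supported in $\overline{U_i}$ and the family $(\iota_c(\chi_i f))_i$ is locally finite. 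Define
$$\iota(f) := \sum_{i \in I} \iota_c(\chi_i f) \in \Gss(\Omega)$$
by gluing in the sheaf $\Gss$. Independence from the choice of partition is the standard double-partition trick: for a second such partition $\{\chi_j'\}$, linearity of $\iota_c$ and local finiteness yield $\sum_i \iota_c(\chi_i f) = \sum_{i,j} \iota_c(\chi_i \chi_j' f) = \sum_j \iota_c(\chi_j' f)$.

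Property (i) is immediate, because when $f \in {\Es}'(\Omega)$ only finitely many $\chi_i$ meet $\operatorname{supp} f$, whence $\iota(f) = \iota_c\bigl(\sum_i \chi_i f\bigr) = \iota_c(f)$ by linearity. For (iii), if $f \in \Es(\Omega)$ then each $\chi_i f \in \Ds(\Omega)$, so Proposition \ref{embedding} gives $\iota_c(\chi_i f) = \sigma(\chi_i f)$, and $\iota(f) = \sigma(f)$; the multiplicativity statement $\iota(fg) = \iota(f)\iota(g)$ follows since $\sigma$ is an algebra homomorphism. Verification of (ii) is the step requiring most care, since the cut-offs $\chi_i$ do not commute with ultradifferential operators. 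First I would observe that $\iota_c$ already commutes with $P(D)$ on ${\Es}'(\Omega)$ because $(P(D)g)\ast\phi_\varepsilon = P(D)(g\ast\phi_\varepsilon)$. To globalize, I would localize on an arbitrary $V \Subset \Omega$: choose $\chi \in \Ds(\Omega)$ with $\chi \equiv 1$ in a neighborhood of $\overline{V}$; support preservation of $\iota_c$ then implies $\iota(g)|_V = \iota_c(\chi g)|_V$ for every $g \in {\Ds}'(\Omega)$. Applying this to $g = f$ and $g = P(D)f$, and noting that $P(D)(\chi f) - \chi P(D)f$ is supported off $V$ (its Leibniz-type expansion only involves terms with $D^\beta \chi$, $\beta \neq 0$), one obtains
$$\iota(P(D)f)|_V = \iota_c(\chi P(D)f)|_V = \iota_c(P(D)(\chi f))|_V = P(D)(\iota_c(\chi f))|_V = (P(D)\iota(f))|_V.$$

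Injectivity and the sheaf monomorphism statement both follow from the same local formula. If $\iota_\Omega(f) = 0$, then $\iota_c(\chi f)|_V = 0$ for every admissible $\chi$; Proposition \ref{nullchar} forces $((\chi f)\ast\phi_\varepsilon)|_V$ to be negligible, and pairing against $\psi \in \Ds(V)$ together with the convergence $(\chi f)\ast\phi_\varepsilon \to \chi f$ in ${\Es}'$ (cf.\ the proof of Proposition \ref{embedding} and Corollary \ref{coro}) yields $\langle f, \psi \rangle = \langle \chi f, \psi\rangle = 0$, hence $f = 0$. The compatibility $\iota_\Omega(f)|_{\Omega'} = \iota_{\Omega'}(f|_{\Omega'})$ for $\Omega' \subseteq \Omega$ reduces, via the same local formula, to the obvious fact that $(\chi f)\ast\phi_\varepsilon$ depends only on $f|_{\Omega'}$ whenever $\operatorname{supp}\chi \subset \Omega'$. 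The main obstacle I expect is not any deep analytic step but rather the careful orchestration of these localization arguments, so that partition-independence, commutation with $P(D)$, and sheaf compatibility all follow uniformly from the single principle that $\iota_c$ is support preserving; once that pattern is set up, everything reduces to manipulations already encoded in the preparatory propositions.
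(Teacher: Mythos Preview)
Your proposal is correct and follows precisely the approach the paper takes: the paper simply remarks that any support preserving linear embedding between the compactly supported sections of two fine (or soft) sheaves extends uniquely to a sheaf embedding, citing this as a standard fact (with a reference to Komatsu), and then states the theorem as a summary of the preceding propositions. Your argument spells out that standard extension via a locally finite $\Ds$-partition of unity and carries through the verifications of (i)--(iii), injectivity, and sheaf compatibility exactly as one would expect; in particular, your localization device $\iota(g)|_V=\iota_c(\chi g)|_V$ is the correct way to handle the commutation with ultradifferential operators, and the observation that $P(D)(\chi f)-\chi P(D)f$ vanishes on the set where $\chi\equiv 1$ needs only locality of $P(D)$, not a full Leibniz rule.
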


It is clear that the embedding $\iota:{\mathcal{D}^{\ast}}'(\Omega)\to \Gss(\Omega)$ constructed in this section satisfies the properties $(P.1)$--$(P.3)$ stated in Section \ref{imposs section} with $\dagger = \ast$.  Hence, our embedding is optimal in the sense discussed there.

\section{Regular generalized functions of class $(M_p)$ and $\{M_p\}$.}\label {section regularity}
The goal of this section is to introduce a notion of regularity (with respect to $\ast$-ultradifferentiability) in the algebra of $\ast$-generalized functions. This will be done via the algebra ${\mathcal{G}^{\ast}}^{,\infty}(\Omega)$ defined below and will be exploited in the next section to study microlocal properties of $\ast$-generalized functions with respect to ${\mathcal{G}^{\ast}}^{,\infty}$-microregularity.  Note that the counterpart of our algebra in the context of classical Colombeau theory is Oberguggenberger's algebra $\mathcal{G}^{\infty}(\Omega)$ \cite{Ober}; see also \cite{hormann2004,PilScar,PilScar2001b,p-s-v2013a} for regularity analysis of generalized functions.

We define the algebra of \emph{regular} $\ast$-generalized functions on $\Omega$ as
\[ \Gsi(\Omega) = \mathcal{E}^{\ast,\infty}_{\mathcal{M}}(\Omega)/\Ns(\Omega),\]
where

\begin{align*}
\mathcal{E}^{(M_p),\infty}_{\mathcal{M}}(\Omega) = \{ (f_\varepsilon)_\varepsilon \in \Ef(\Omega)^{(0,1]} \, : \, &(\forall K \Subset \Omega) (\exists k > 0) (\forall h > 0) \\
 &\|f_\varepsilon\|_{\Eh(K)}  = O(e^{M(k/\varepsilon)})     \}
\end{align*}
and
\begin{align*}
\mathcal{E}^{\{M_p\},\infty}_{\mathcal{M}}(\Omega) = \{ (f_\varepsilon)_\varepsilon \in \Ee(\Omega)^{(0,1]} \, : \, &(\forall K \Subset \Omega) (\exists h > 0) (\forall k > 0) \\
 &\|f_\varepsilon\|_{\Eh(K)}  = O(e^{M(k/\varepsilon)})       \}.
\end{align*}
We remark that a similar argument to the one employed in the proof of  Proposition \ref{projective} leads to 
\begin{align*}
\mathcal{E}^{\{M_p\},\infty}_{\mathcal{M}}(\Omega) = \{ (f_\varepsilon)_\varepsilon \in \Ee(\Omega)^{(0,1]} \, : \, &(\forall K \Subset \Omega) (\exists s_j \in \mathcal{R})(\forall r_j \in \mathcal{R}) \\
 &\|f_\varepsilon\|_{K,r_j}  = O(e^{M_{s_j}(1/\varepsilon)})       \},
\end{align*}
we omit details. Clearly $\Gsi$ is a subsheaf of $\Gss$. It is also fine, supple, non-flabby, and closed under $\ast$-ultradifferential operators. A generalized function $f\in\Gss(\Omega)$ is said to be \emph{${\Gss}^{,\infty}$-regular} (in short: \emph{$\ast$-regular}) on $\Omega'\subseteq \Omega$ if its restriction to $\Omega'$ belongs to ${\Gss}^{,\infty}(\Omega')$. The \emph{$\ast$-singular support} of $f \in \Gss(\Omega)$, denoted as $\operatorname*{sing\: supp}_{g,\ast} f$, is defined as the complement in $\Omega$ of the largest open set on which $f$ is $\ast$-regular. We need the following simple but useful result, a Paley-Wiener-Komatsu type characterization of compactly supported regular $\ast$-generalized functions. It follows of course directly from the classical result \cite[Lemm. 3.3]{Komatsu}.
\begin{lemma}\label{Pwsalg}
Let  $f \in \Gss_c(\Omega)$ and let $(f_\varepsilon)_\varepsilon$  be a compactly supported representative of $f$. Then, $f  \in \Gsi (\Omega)$ if and only if 
\[( \exists k > 0)(\forall h > 0) \qquad (( \exists h > 0)(\forall k > 0))    \]
\[ \sup_{\xi \in \R^d}|\widehat{f}_\varepsilon(\xi)|e^{M(|\xi|/h)} = O(e^{M(k/\varepsilon)}). \]
\end{lemma}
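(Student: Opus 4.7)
The plan is to apply the classical Paley-Wiener-Komatsu estimate \cite[Lemm.\,3.3]{Komatsu} to each $f_\varepsilon$ \emph{uniformly in} $\varepsilon$, and then verify that the quantifier patterns on $(h,k)$ translate correctly between the $\mathcal{E}^{\{M_p\},h}$-norm side and the Fourier side. First I would fix a compact set $K_0 \Subset \Omega$ with $\operatorname{supp}f_\varepsilon \subseteq K_0$ for every $\varepsilon \in (0,1]$; such a $K_0$ exists by hypothesis. Since $f_\varepsilon$ vanishes off $K_0$, one has $\|f_\varepsilon\|_{\Eh(K)} \leq \|f_\varepsilon\|_{\Eh(K_0)}$ for every $K \Subset \Omega$ (replace $K$ by $K \cup K_0$ if necessary), so the defining condition for $(f_\varepsilon)_\varepsilon \in \mathcal{E}^{\ast,\infty}_{\mathcal{M}}(\Omega)$ reduces to the estimate $\|f_\varepsilon\|_{\Eh(K_0)} = O(e^{M(k/\varepsilon)})$ under the same quantifier pattern on $(h,k)$ prescribed by the definition.

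For the direct implication, Komatsu's estimate reads
\[ |\widehat{f_\varepsilon}(\xi)| \leq C|K_0|\,\|f_\varepsilon\|_{\Eh(K_0)}\,e^{-M(|\xi|/(ch))}, \qquad \xi \in \R^d,\ h > 0, \]
for some absolute constants $c,C>0$ depending only on $d$. Setting $h' := ch$ rearranges this into
\[ \sup_{\xi \in \R^d}|\widehat{f_\varepsilon}(\xi)|\,e^{M(|\xi|/h')} \leq C|K_0|\,\|f_\varepsilon\|_{\Eh(K_0)}, \]
and since $h \mapsto ch$ is a bijection of $(0,\infty)$ the Beurling pattern $(\exists k)(\forall h)$ and the Roumieu pattern $(\exists h)(\forall k)$ pass directly to the Fourier side.

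For the converse, I would rely on Fourier inversion,
\[ f_\varepsilon^{(\alpha)}(x) = \frac{1}{(2\pi)^d}\int_{\R^d}(-i\xi)^{\alpha}\widehat{f_\varepsilon}(\xi)\,e^{-ix\cdot\xi}\,\dxi, \]
together with the defining inequality $|\xi|^{|\alpha|} \leq h'^{|\alpha|}M_{|\alpha|}e^{M(|\xi|/h')}$ of the associated function, which bounds $|f_\varepsilon^{(\alpha)}(x)|/(h'^{|\alpha|}M_{|\alpha|})$ by a constant multiple of $\int_{\R^d}|\widehat{f_\varepsilon}(\xi)|e^{M(|\xi|/h')}\,\dxi$. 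Inserting the Fourier hypothesis with parameter $h = h'/H$, where $H$ is the constant from \eqref{assM.2}, and invoking $2M(|\xi|/h') \leq M(|\xi|/h) + \log A$, the integrand is dominated by a constant multiple of $e^{-M(|\xi|/h')}$, whose integral over $\R^d$ is finite by $(M.3)'$ together with \eqref{assM.3}. Taking a supremum in $\alpha$ and $x \in K_0$ then yields $\|f_\varepsilon\|_{\mathcal{E}^{\{M_p\},h'}(K_0)} = O(e^{M(k/\varepsilon)})$. In the Beurling case, the universal $\forall h$ available in the hypothesis covers $h = h'/H$ for every $h'>0$; in the Roumieu case, the fixed $h$ supplied by the hypothesis produces a single $h' = Hh$, while the universal $\forall k$ is preserved in both.

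I do not anticipate a serious obstacle: the statement is essentially a uniform-in-$\varepsilon$ application of a classical Paley-Wiener-Komatsu equivalence, and the underlying one-shot inequalities are available verbatim in \cite{Komatsu}. The only point that demands careful bookkeeping is ensuring that the change of variable $h \leftrightarrow h'$ between the Fourier exponent and the $\mathcal{E}^{\{M_p\},h}$-exponent respects the asymmetric quantifier patterns distinguishing the Beurling and Roumieu settings, as indicated above.
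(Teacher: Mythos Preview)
Your proposal is correct and is precisely the approach the paper has in mind: the paper does not spell out a proof at all but simply states that the lemma ``follows of course directly from the classical result \cite[Lemm.~3.3]{Komatsu},'' and what you have written is exactly that direct verification, carried out uniformly in $\varepsilon$ with the appropriate quantifier bookkeeping for the Beurling and Roumieu cases. One small remark: the finiteness of $\int_{\R^d} e^{-M(|\xi|/h')}\,\dxi$ does not really require $(M.3)'$ or \eqref{assM.3}; it follows already from the elementary bound $e^{-M(t)} \leq M_p\,t^{-p}$ (valid for every $p$ by the definition of $M$), so you may streamline that justification.
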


The next regularity theorem is the main result of this section. It gives a precise characterization of the embedded image of $\Es(\Omega)$ under the embedding $\iota$ constructed in Section \ref{section-embedding} in terms of the algebra $\mathcal{G}^{\ast,\infty}(\Omega)$.

\begin{theorem}\label{mainreg} $\Gsi(\Omega) \cap \iota({\Ds}'(\Omega)) =  \iota(\Es(\Omega)).$
\end{theorem}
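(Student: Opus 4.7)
The inclusion $\iota(\Es(\Omega))\subseteq\Gsi(\Omega)\cap\iota({\Ds}'(\Omega))$ is immediate from Theorem~\ref{main}(iii): for $f\in\Es(\Omega)$ the constant net $(f)_{\varepsilon}$ represents $\iota(f)=\sigma(f)$ and trivially satisfies the regular--moderate estimates defining $\mathcal{E}^{\ast,\infty}_{\mathcal{M}}(\Omega)$.

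For the converse, suppose $f\in{\Ds}'(\Omega)$ with $\iota(f)\in\Gsi(\Omega)$; I aim to show $f\in\Es(\Omega)$. Since both $\Es$ and $\mathcal{G}^{\ast,\infty}$ are sheaves and $\iota$ is a sheaf morphism, the claim is local, and it suffices to establish $\ast$-ultradifferentiability near each $x_{0}\in\Omega$. Choose open sets $U\Subset V\Subset\Omega$ with $x_{0}\in U$ and a cutoff $\chi\in\Df(\Omega)$ with $\chi\equiv 1$ on $\overline U$ and $\operatorname{supp}\chi\subset V$. Since $f=\chi f$ on $U$, it is enough to prove $\chi f\in\Es(\Omega)$ as a compactly supported element.

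The plan is to produce a compactly supported regular--moderate representative of $\iota_c(\chi f)$ and then invoke Lemma~\ref{Pwsalg}. Let $(g_{\varepsilon})\in\mathcal{E}^{\ast,\infty}_{\mathcal{M}}(\Omega)$ be any regular--moderate representative of $\iota(f)$, and set $v_{\varepsilon}:=\chi g_{\varepsilon}$. Multiplication by the fixed $\ast$-ultradifferentiable function $\chi\in\Df$ preserves the regular--moderate estimates (via Leibniz), and $(v_{\varepsilon})$ is compactly supported in $\operatorname{supp}\chi$. A commutator-type estimate, comparing $v_{\varepsilon}$ with the canonical representative $(\chi f)\ast\phi_{\varepsilon}$ of $\iota_c(\chi f)$---exploiting the $\ast$-ultradifferentiability of $\chi$ together with the rapid decay of $\phi_{\varepsilon}$ away from $\operatorname{supp}\chi$ guaranteed by (\ref{eqass3N}), in the spirit of Proposition~\ref{support}---shows that the discrepancy $n_{\varepsilon}:=v_{\varepsilon}-(\chi f)\ast\phi_{\varepsilon}$ is $\ast$-negligible, both as an element of $\Ns(\Omega)$ and in the global sense $\|n_{\varepsilon}\|_{L^{1}(\R^{d})}=O(e^{-M(k'/\varepsilon)})$ with the appropriate quantifier on $k'$. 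Therefore $(v_{\varepsilon})$ represents $\iota_c(\chi f)$, placing $\iota_c(\chi f)\in\Gsi_c(\Omega)$, and Lemma~\ref{Pwsalg} delivers
\[\sup_{\xi\in\R^{d}}|\widehat{v_{\varepsilon}}(\xi)|\,e^{M(|\xi|/h)}=O(e^{M(k/\varepsilon)}),\]
with quantifiers $(\exists k)(\forall h)$ in the Beurling case and $(\exists h)(\forall k)$ in the Roumieu case.

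Since $\widehat{\phi}(\varepsilon\xi)=1$ for $|\xi|\leq 1/\varepsilon$, the identity $\widehat{(\chi f)\ast\phi_{\varepsilon}}(\xi)=\widehat{\chi f}(\xi)$ holds on this region, so
\[\widehat{\chi f}(\xi)=\widehat{v_{\varepsilon}}(\xi)-\widehat{n_{\varepsilon}}(\xi)\qquad\text{for }|\xi|\leq 1/\varepsilon,\]
while the $L^{1}$-bound on $n_{\varepsilon}$ gives $|\widehat{n_{\varepsilon}}(\xi)|=O(e^{-M(k'/\varepsilon)})$ uniformly in $\xi$. For each large $|\xi|$, choosing $\varepsilon\sim 1/|\xi|$ and using the subadditivity-with-scaling $M(a)+M(b)\leq M(H(a+b))+\log A$ (an immediate consequence of $(M.2)$) to absorb the $\varepsilon$-moderate growth factors into $\xi$-decay, one obtains $|\widehat{\chi f}(\xi)|\leq C\,e^{-M(|\xi|/h^{\ast})}$ for every $h^{\ast}>0$ (Beurling) or for some $h^{\ast}>0$ (Roumieu). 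The Paley--Wiener--Komatsu theorem then yields $\chi f\in\Df(\Omega)$ (resp.\ $\De(\Omega)$), and hence $f\in\Es(U)$. The principal technical hurdle is the commutator/decay estimate establishing the $\ast$-negligibility of $n_{\varepsilon}$ on $\R^{d}$, which relies crucially on the compatibility condition (\ref{eqass3N}) to convert the $(N_p)$-decay of $\phi_{\varepsilon}$ into $\ast$-negligibility.
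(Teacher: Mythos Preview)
Your outline has a genuine gap at the commutator step: the assertion that $n_{\varepsilon}=\chi g_{\varepsilon}-(\chi f)\ast\phi_{\varepsilon}$ lies in $\Ns(\Omega)$ (equivalently, that $\sigma(\chi)\iota(f)=\iota(\chi f)$ in $\Gss(\Omega)$) is \emph{false} in general. This already fails in the classical special Colombeau algebra, and the ultradifferentiable scale does not rescue it. Take $d=1$, let $a$ lie in the transition zone of $\chi$ (so $\chi'(a)\neq 0$), and let $f=\delta_{a}$. Then $n_{\varepsilon}(x)=(\chi(x)-\chi(a))\phi_{\varepsilon}(x-a)$, and evaluating at $x=a+\varepsilon t_{0}$ with $\phi(t_{0})\neq 0$ gives
\[
|n_{\varepsilon}(a+\varepsilon t_{0})|\;\sim\;|\chi'(a)|\,|t_{0}\phi(t_{0})|,
\]
a positive constant as $\varepsilon\to 0$. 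Hence $\sup_{K}|n_{\varepsilon}|$ does not even tend to zero, let alone satisfy $O(e^{-M(k/\varepsilon)})$, and the $L^{1}$-norm is only $O(\varepsilon)$. The vanishing of the higher moments of $\phi$ and the decay of $\phi_{\varepsilon}$ away from the diagonal (Proposition~\ref{support}, condition~(\ref{eqass3N})) are of no help here, since the obstruction lives at distance $O(\varepsilon)$ from the singular support of $f$.

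The paper avoids this trap entirely. Instead of trying to identify $\chi g_{\varepsilon}$ with a representative of $\iota(\chi f)$, it works with $f$ itself (compactly supported) and controls the difference $\widehat{f}(\xi)-\widehat{f_{\varepsilon}}(\xi)$ by pairing $f-f_{\varepsilon}$ against the test function $\psi(x)e^{i\xi\cdot x}$ and invoking Corollary~\ref{coro}. That corollary yields
\[
|\widehat{f}(\xi)-\widehat{f_{\varepsilon}}(\xi)|\leq C\,e^{M(|\xi|/h)-M(c/(h\varepsilon))},
\]
i.e.\ a bound that \emph{grows} in $\xi$ but decays in $\varepsilon$; combining this with the decay in $\xi$ and growth in $\varepsilon$ from Lemma~\ref{Pwsalg}, and then optimizing $\varepsilon\sim|\xi|^{-1}$, gives the Paley--Wiener estimate on $\widehat{f}$. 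Your endgame (coupling $\varepsilon$ to $|\xi|$ via $(M.2)$) is the same, but the correct middle step is this dual estimate from Corollary~\ref{coro}, not a commutator identity.
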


\begin{remark} Theorem \ref{mainreg} considerably improves  the characterizations from \cite{PilScar,PilScar2001b} of $\ast$-ultradistributions that belong to $\Es(\Omega)$ via the rate of growth of nets of regularizations. In fact, contrary to the quoted former results, where only sufficient conditions were obtained, Theorem \ref{mainreg} provides a necessary and sufficient condition for $\ast$-ultradifferentiability.
\end{remark}
\begin{proof}
The inclusion $ \iota(\Es(\Omega)) \subseteq \Gsi(\Omega) \cap  \iota({\Ds}'(\Omega))$ is clear. Conversely, let $f \in {\Ds}'(\Omega)$ such that $\iota(f) \in \Gsi(\Omega)$. We may assume without loss of generality that $f$ is compactly supported. By the Paley-Wiener-Komatsu theorem \cite[Thrm. 9.1]{Komatsu}, it suffices to show that for every $\lambda > 0$ (for some $\lambda > 0$) 
\[
\sup_{\xi \in \R^d}|\widehat{f}(\xi)|e^{M(|\xi|/\lambda)} < \infty. 
\]
Let  $(f_\varepsilon)_\varepsilon$  be a compactly supported representative of $\iota(f)$. Choose $K \Subset \Omega$ such that  $\operatorname{supp} f \subseteq K$ and  $\operatorname{supp} f_\varepsilon \subseteq K$ for all $\varepsilon \in (0,1]$. Let $\psi \in \Df(\Omega)$ such that $\psi \equiv 1$ on a neighborhood of $K$. Hence,
\[ \widehat{f}(\xi) = \langle f(x) - f_\varepsilon(x), \psi(x)e^{i\xi \cdot x}\rangle + \widehat{f}_\varepsilon(\xi), \qquad \forall \xi \in \R^d, \forall \varepsilon \in (0,1].  \]
Let $A ,H$ be the constants appearing in \eqref{assM.2}. Corollary \ref{coro} and the fact that
\[ \|e^{i\xi \cdot x} \|_{\Eh(K)} \leq e^{M(|\xi|/h)}, \qquad \forall \xi \in \R^d, \forall h > 0 \]
imply that 
\[(\exists h > 0)(\exists C > 0)( \exists \varepsilon_0 > 0)( \forall \varepsilon < \varepsilon_0)\]
\[| \langle f(x) - f_\varepsilon(x), \psi(x)e^{i\xi \cdot x}\rangle | \leq C\| \psi\|_{\Eh(K)}e^{M(|\xi|/h)-M(c/h\varepsilon)},\qquad \forall \xi \in \R^d, \]
where $c = 1/(2\sqrt{d}H^2)$. Combining this with Lemma \ref{Pwsalg} yields 
\[ (\exists k,h > 0)(\forall l > 0)(\exists C' > 0)( \exists \varepsilon_0 > 0)( \forall \varepsilon < \varepsilon_0)  \]
\[ ((\exists h,l > 0)(\forall k > 0)(\exists C' > 0)(\exists \varepsilon_0 > 0)( \forall \varepsilon < \varepsilon_0)) \]
\begin{equation}
|\widehat{f}(\xi)| \leq C' (e^{M(|\xi|/h)-M(c/(h\varepsilon))} + e^{M(k/\varepsilon)-M(|\xi|/l)}), \qquad \forall \xi \in \R^d.
\label{mainineq}
\end{equation}
\emph{Beurling case:} Let $\lambda > 0$ be arbitrary. Set $l = \min(\lambda/H, c\min(1,\lambda/h)/(kH^2))$ in \eqref{mainineq}. Hence, for all $|\xi| > c\min(1,\lambda/h)/(\varepsilon_0H)$,  inequality \eqref{mainineq} with $\varepsilon = c\min(1,\lambda/h)/(|\xi|H)$ implies that 
\[|\widehat{f}(\xi)| \leq 2C' Ae^{-M(|\xi|/\lambda)}.\]
\emph{Roumieu case:} Define $\lambda = \max(h,lH)$ and set $k =c/ (\lambda H)$  in \eqref{mainineq}. Hence, for all $|\xi| > c/(\varepsilon_0H)$,  inequality \eqref{mainineq} with $\varepsilon = c/(|\xi|H) $ implies that 
\[|\widehat{f}(\xi)| \leq 2C' Ae^{-M(|\xi|/\lambda)}.\]
\end{proof}

We have the following corollary. As usual, for $f\in{\mathcal{D}^{\ast}}'(\Omega)$, we employ the notation $\operatorname*{sing\: supp}_{\ast} f$ for its classical singular support with respect to $\ast$-ultradifferentiability.

\begin{corollary}
\label{cor sing supp} If $f\in {\mathcal{D}^{\ast}}'(\Omega)$, then $\operatorname*{sing\: supp}_{g,\ast} \iota(f)=\operatorname*{sing\: supp}_{\ast} f.$
\end{corollary}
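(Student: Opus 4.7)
The plan is to establish both inclusions by exploiting that $\iota$ is a sheaf morphism together with the local version of Theorem \ref{mainreg}.

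For the inclusion $\operatorname*{sing\: supp}_{g,\ast} \iota(f)\subseteq\operatorname*{sing\: supp}_{\ast} f$, I would take a point $x_{0}\notin \operatorname*{sing\: supp}_{\ast} f$ and pick an open neighborhood $U\subseteq \Omega$ of $x_{0}$ such that $f_{|U}\in\Es(U)$. Because the family $(\iota_{\Omega})_{\Omega}$ is a sheaf monomorphism (last part of Theorem \ref{main}), we have $\iota_{\Omega}(f)_{|U}=\iota_{U}(f_{|U})$; and by property $(iii)$ of Theorem \ref{main} applied on $U$, this equals the constant embedding $\sigma(f_{|U})$. Since $f_{|U}\in\Es(U)$, the net $(f_{|U})_{\varepsilon}$ trivially lies in $\mathcal{E}^{\ast,\infty}_{\mathcal{M}}(U)$, so $\iota(f)_{|U}\in\Gsi(U)$, which means $x_{0}\notin\operatorname*{sing\: supp}_{g,\ast}\iota(f)$.

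For the reverse inclusion $\operatorname*{sing\: supp}_{\ast} f\subseteq \operatorname*{sing\: supp}_{g,\ast} \iota(f)$, take $x_{0}\notin \operatorname*{sing\: supp}_{g,\ast} \iota(f)$ and choose an open neighborhood $U\subseteq \Omega$ of $x_{0}$ with $\iota(f)_{|U}\in\Gsi(U)$. Using again that $\iota$ is a sheaf morphism, $\iota(f)_{|U}=\iota_{U}(f_{|U})\in \Gsi(U)\cap \iota_{U}({\Ds}'(U))$. Now Theorem \ref{mainreg}, applied to the open set $U$ in place of $\Omega$, gives $\iota_{U}(f_{|U})\in \iota_{U}(\Es(U))$, i.e.\ there exists $g\in\Es(U)$ with $\iota_{U}(f_{|U})=\iota_{U}(g)$. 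Injectivity of $\iota_{U}$ (Theorem \ref{main}) then forces $f_{|U}=g\in\Es(U)$, so $x_{0}\notin\operatorname*{sing\: supp}_{\ast} f$.

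The main point to verify carefully is that $\iota(f)_{|U}=\iota_{U}(f_{|U})$ whenever $U\subseteq\Omega$ is open, which is exactly the assertion that the collection $(\iota_{\Omega})_{\Omega}$ forms a sheaf morphism; this is already included in the conclusion of Theorem \ref{main}. Beyond that, the argument is essentially a localization of Theorem \ref{mainreg}, so no genuinely new estimate is required. The only minor obstacle is keeping track of Beurling versus Roumieu quantifiers when invoking Theorem \ref{mainreg} on $U$, but this is automatic since the theorem is stated uniformly for $\ast=(M_{p})$ or $\{M_{p}\}$ and does not depend on $\Omega$ in its formulation.
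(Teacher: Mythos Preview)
Your proposal is correct and is exactly the argument the paper has in mind: the corollary is stated without proof immediately after Theorem \ref{mainreg}, and the intended derivation is precisely the localization via the sheaf monomorphism property of $\iota$ (Theorem \ref{main}) combined with Theorem \ref{mainreg} applied on an open neighborhood $U$. There is nothing to add.
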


\section{Wave front sets in $\Gss(\Omega)$}
\label{section wavefronts}
We now study microlocal properties of $\ast$-generalized functions. We introduce a notion of $\ast$-microlocal regularity which is inspired by our definition of the algebra ${\Gss}^{,\infty}$ of regular $\ast$-generalized functions. We shall show at the end of this section that such a notion is a compatible extension of the $\ast$-wave front set \cite{Hormander,Komatsumla,Pilipovic} of an $\ast$-ultradistribution. 

Let $f \in \Gss_c(\Omega)$. The set $\Sigma_{g}^{\ast} (f)\subseteq \R^d \backslash \{0\}$  is defined as the complement in $\R^d \backslash \{0\}$ of the set of all points $\xi_0$ for which there are an open conic neighborhood $\Gamma$ and a compactly supported representative $(f_\varepsilon)_\varepsilon$ of $f$ such that
\[( \exists k > 0)(\forall h > 0) \qquad (( \exists h > 0)(\forall k > 0))    \]
\[ \sup_{\xi \in \Gamma}|\widehat{f}_\varepsilon(\xi)|e^{M(|\xi|/h)} = O(e^{M(k/\varepsilon)}). \]
Note that if the inequality is established for some compactly supported representative, it holds for all such representatives. Furthermore,  $\Sigma_{g}^{\ast}(f)$ is a closed cone and, by Lemma \ref{Pwsalg} and a compactness argument,  $\Sigma_g^{\ast}(f) = \emptyset$ if and only if $f \in \Gsi(\Omega)$.
\begin{lemma}\label{coneshrink}
Let $f \in \Gss_c(\Omega)$ and $u \in \Gsi(\Omega)$. Then, $\Sigma_g^*(uf) \subseteq \Sigma_g^*(f)$. 
\end{lemma}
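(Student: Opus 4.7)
The plan is to reduce to the case where $u$ is also compactly supported, then exploit the convolutional form of the Fourier transform of a product together with a dyadic cone decomposition, in the spirit of the classical proof that $\Gamma$-microregular sections are closed under multiplication by smooth functions.

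First I would use a cut-off: since $\operatorname{supp} f_\varepsilon \subseteq K$ for some compact $K \Subset \Omega$ and for all $\varepsilon$, pick $\psi \in \De(\Omega)$ equal to $1$ on a neighborhood of $K$ (such $\psi$ exists by $(M.3)'$). Then $(u_\varepsilon f_\varepsilon)_\varepsilon = (\psi u_\varepsilon \cdot f_\varepsilon)_\varepsilon$, and the net $(v_\varepsilon)_\varepsilon := (\psi u_\varepsilon)_\varepsilon$ represents an element of $\Gsi(\Omega) \cap \Gss_c(\Omega)$; by Lemma~\ref{Pwsalg} this yields the quantifier pattern $(\exists k_1>0)(\forall h_0>0)$ (resp.\ $(\exists h_0>0)(\forall k_1>0)$) for an estimate $|\widehat{v_\varepsilon}(\zeta)| \leq C_{h_0}e^{M(k_1/\varepsilon)}e^{-M(|\zeta|/h_0)}$ on all of $\R^d$. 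Similarly $\widehat{f_\varepsilon}$ has the moderate bound, and by the hypothesis $\xi_0 \notin \Sigma_g^\ast(f)$, $\widehat{f_\varepsilon}$ has the stronger $\Gsi$-type decay on some open conic neighborhood $\Gamma$ of $\xi_0$.

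Next I would pick a smaller conic neighborhood $\Gamma' \Subset \Gamma$ of $\xi_0$ together with $c \in (0,1)$ small enough that $\xi \in \Gamma'$ and $|\xi-\eta|\leq c|\xi|$ force $\eta \in \Gamma$ (purely a geometric fact about cones) and, in addition, $|\eta| \geq (1-c)|\xi|$. Using $\widehat{v_\varepsilon f_\varepsilon}(\xi) = (2\pi)^{-d}(\widehat{v_\varepsilon}\ast \widehat{f_\varepsilon})(\xi)$, the integral is split into the regions $R_1 = \{|\xi-\eta|\leq c|\xi|\}$ and $R_2 = \{|\xi-\eta| > c|\xi|\}$. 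On $R_1$ I estimate $|\widehat{f_\varepsilon}(\eta)|$ with an arbitrary rate $h$ (chosen as $h = h^\ast(1-c)$ for the target $h^\ast$), so the factor $e^{-M(|\eta|/h)} \leq e^{-M(|\xi|/h^\ast)}$ can be pulled out, leaving $\|\widehat{v_\varepsilon}\|_{L^1}$, which is $O(e^{M(k_1/\varepsilon)})$ by choosing $h_0 = 1$ in the $\Gsi$ estimate and using $(M.3)'$ for integrability.

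On $R_2$ the key trick is to extract an $|\xi|$-decay factor from $\widehat{v_\varepsilon}$ while preserving integrability of the remaining tail. Using $(M.2)$ in the form $2M(t)\leq M(Ht)+\log A$ (equation \eqref{assM.2}), one obtains $e^{-M(|\zeta|/h_0)} \leq A\,e^{-2M(|\zeta|/(Hh_0))}$; applying this to $\widehat{v_\varepsilon}(\xi-\eta)$ with $Hh_0 = ch^\ast$ and using $|\xi-\eta|\geq c|\xi|$ in only one of the two factors yields
\[ |\widehat{v_\varepsilon}(\xi-\eta)| \leq C'_{h_0}\,e^{M(k_1/\varepsilon)}\,e^{-M(|\xi|/h^\ast)}\,e^{-M(|\xi-\eta|/(ch^\ast))}. \]
Then $\int e^{-M(|\xi-\eta|/(ch^\ast))}|\widehat{f_\varepsilon}(\eta)|\,d\eta \leq \|e^{-M(|\cdot|/(ch^\ast))}\|_{L^1}\|\widehat{f_\varepsilon}\|_{L^\infty}$, and the $L^\infty$ norm is controlled by $|K|\,\|f_\varepsilon\|_{L^\infty} = O(e^{M(k_2/\varepsilon)})$ from moderateness. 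Combining the two estimates and using $(M.2)$ once more to merge the exponentials $e^{M(k_0/\varepsilon)+M(k_1/\varepsilon)+\cdots}$ into a single $e^{M(k^\ast/\varepsilon)}$, one gets the required bound $|\widehat{v_\varepsilon f_\varepsilon}(\xi)|\,e^{M(|\xi|/h^\ast)} = O(e^{M(k^\ast/\varepsilon)})$ on $\Gamma'$, so $\xi_0 \notin \Sigma_g^\ast(uf)$.

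The Roumieu case proceeds with the mirror-image quantifier bookkeeping: the single fixed $h_0$ from $v\in\Gsi$ and the single fixed $h$ from $\xi_0\notin \Sigma_g^\ast(f)$ dictate the choice of the single $h^\ast$ that works for $uf$, while the $k$'s in the estimates become universal and are handled term-by-term. The main technical obstacle is precisely the second-region estimate: factoring $e^{-M(|\xi-\eta|/h_0)}$ cleanly into an $|\xi|$-piece and an integrable remainder in a way that respects the required Beurling/Roumieu quantifier structure, which is exactly what the doubling property \eqref{assM.2} supplies.
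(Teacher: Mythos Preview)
Your proposal is correct and follows essentially the same route as the paper: cut off $u$ by $\psi$, invoke Lemma~\ref{Pwsalg} for $(v_\varepsilon)_\varepsilon$, pick a smaller cone $\Gamma'$ with the same geometric constant $c$, and split the convolution $\widehat{v_\varepsilon}\ast\widehat{f_\varepsilon}$ into near and far regions. The only (inessential) difference is in the far region $R_2$: the paper simply bounds $|\widehat{v}_\varepsilon(\eta)|\le Ce^{M(k_1/\varepsilon)-M(c|\xi|/h_1)}$ for $|\eta|>c|\xi|$ and absorbs the remaining integral into $\|\widehat{f}_\varepsilon\|_{L^1}$, whereas you use the doubling trick from \eqref{assM.2} to split off both an $|\xi|$-decay factor and an integrable remainder, pairing the latter with $\|\widehat{f}_\varepsilon\|_{L^\infty}$---the paper's version is slightly leaner but both are standard and yield the same conclusion with the same quantifier bookkeeping.
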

\begin{proof}
 Let $(f_\varepsilon)_\varepsilon$ be a compactly supported representative of $f$. We have for some $k_0 > 0$ (for every $k_0 > 0$) 
\[ \| \widehat{f}_\varepsilon \|_{L^1}= O(e^{M(k_0/\varepsilon)}). \]
Choose $\psi \in \Ds(\Omega)$ such that $\psi \equiv 1$ in a neighborhood of $\operatorname{supp} f$, hence $uf = u\iota(\psi)f$. Note that $u\iota(\psi) \in \Gsi(\Omega) \cap \Gss_c(\Omega)$ and thus, by Lemma \ref{Pwsalg},
\[( \exists k_1 > 0)(\forall h_1 > 0) \qquad (( \exists h_1 > 0)(\forall k_1 > 0))    \]
\[ \sup_{\xi \in \R^d}|\widehat{v}_\varepsilon(\xi)|e^{M(|\xi|/h_1)} = O(e^{M(k_1/\varepsilon)}), \]
where $(v_\varepsilon)_\varepsilon$ is a compactly supported representative of $u\iota(\psi)$.
Suppose that $\xi_0 \notin \Sigma_g^*(f)$. 
Then, there is an open conic neighborhood $\Gamma$ such that
\[( \exists k_2 > 0)(\forall h_2 > 0) \qquad (( \exists h_2 > 0)(\forall k_2 > 0))    \]
\[ \sup_{\xi \in \Gamma}|\widehat{f}_\varepsilon(\xi)|e^{M(|\xi|/h_2)} = O(e^{M(k_2/\varepsilon)}). \]
Choose an open conic neighborhood $\Gamma_1$ of $\xi_0$ such that $\overline{\Gamma}_1 \subseteq \Gamma \cup \{ 0 \}$. Let  $0 < c < 1$ be smaller than the distance between $\partial \Gamma$ and the intersection of $\Gamma_1$ with the unit sphere. Note that $\{\eta \in \R^d :\, (\exists \xi \in \Gamma_1)(|\xi -\eta|\leq c|\xi|) \} \subseteq \Gamma$ and for all $\xi, \eta \in \R^d$,  $|\xi -\eta|\leq c|\xi|$ implies $|\eta| \geq (1-c)|\xi|$. Hence,
\[ (\exists k_0,k_1,k_2 > 0)(\forall h_1, h_2 > 0)(\exists C, C', C'' > 0)(\exists \varepsilon_0 > 0)(\forall  \varepsilon < \varepsilon_0)(\forall \xi \in \Gamma_1)  \]
\[ ((\exists h_1, h_2 > 0)(\forall k_0,k_1,k_2 > 0)(\exists C, C',C'' > 0)(\exists \varepsilon_0 > 0)(\forall  \varepsilon < \varepsilon_0)(\forall \xi \in \Gamma_1) )\]
\begin{align*}
|\Ft(v_\varepsilon f_\varepsilon)(\xi)| &\leq \frac{1}{(2\pi)^d}\left(\int_{|\eta| \leq c|\xi|} + \int_{|\eta| > c|\xi|} \right) |\widehat{v}_\varepsilon(\eta)| |\widehat{f}_\varepsilon(\xi - \eta)|  \deta \\ 
& \leq  \frac{\| \widehat{v}_\varepsilon \|_{L^1}}{(2\pi)^d} \sup_{|\xi-\eta| \leq c|\xi|}|\widehat{f}_\varepsilon(\eta)| + C \| \widehat{f}_\varepsilon \|_{L^1} e^{-M(c|\xi|/h_1) + M(k_1/\varepsilon)}  \\
& \leq  C'e^{-M((1-c)|\xi|/h_2) + M(k_2/\varepsilon) + M(k_1/\varepsilon)} + C'' e^{-M(c|\xi|/h_1) + M(k_1/\varepsilon) +M(k_0/\varepsilon)}.
\end{align*}
\end{proof}
Let $f \in \Gss(\Omega)$. The generalized wave front set $WF_{g,\ast}(f)$ is defined as the complement in $\Omega \times ( \R^d \backslash \{ 0 \})$ of all pairs $(x_0, \xi_0) $ for which there is $\psi \in \Ds(\Omega)$, with $\psi \equiv 1$ in a neighborhood of $x_0$, such that $\xi_0 \notin \Sigma_g^*(\iota(\psi)f)$. Note that in the Roumieu case, one may always take $\psi \in \Df(\Omega)$, as easily follows from Lemma \ref{coneshrink}. As in \cite[Sect. 8.1]{Hormander}, Lemma \ref{coneshrink} also implies that the projection of $WF_{g,\ast}(f)$ on $\Omega$ is $\operatorname{sing\: supp}_{g,\ast}(f)$, while its projection on $\R^d \backslash \{ 0 \}$ is $\Sigma_g^*(f)$ if $f\in\Gss_{c}(\Omega)$. We collect some properties of $WF_{g,\ast}$ in the next proposition.

\begin{proposition}
Let $f \in \Gss(\Omega)$. Then,
\begin{enumerate}
\item[$(i)$] $WF_{g,\ast}(uf) \subseteq WF_{g,\ast}(f)$ for all  $u \in \Gsi(\Omega)$.
\item[$(ii)$] $WF_{g,\ast}(P(D)f) \subseteq WF_{g,\ast}(f)$ for any $\ast$-ultradifferential operator $P(D)$.
\end{enumerate}
\end{proposition}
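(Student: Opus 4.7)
The plan is to reduce both statements to Lemma~\ref{coneshrink} via suitable cut-off arguments, supplemented in $(ii)$ by a convolution estimate in Fourier space for a localized version of $P(D)f$. For $(i)$, assume $(x_0,\xi_0)\notin WF_{g,\ast}(f)$ and pick $\psi\in\Ds(\Omega)$ with $\psi\equiv 1$ near $x_0$ and $\xi_0\notin \Sigma_g^{\ast}(\iota(\psi)f)$. Choose a second cut-off $\tilde\psi\in\Ds(\Omega)$ with $\tilde\psi\equiv 1$ near $x_0$ and $\operatorname{supp}\tilde\psi\subset\{\psi\equiv 1\}$, so that $\tilde\psi\psi=\tilde\psi$ and hence $\iota(\tilde\psi)\iota(\psi)=\iota(\tilde\psi)$. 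Then
\[
\iota(\tilde\psi)(uf)\;=\;\bigl(u\,\iota(\tilde\psi)\bigr)\cdot\iota(\psi)f,
\]
and $\iota(\tilde\psi)=\sigma(\tilde\psi)\in\Gsi(\Omega)$ (the constant net of an ultradifferentiable function obviously belongs to $\mathcal{E}^{\ast,\infty}_{\mathcal{M}}$). Since $\Gsi(\Omega)$ is an algebra, $u\,\iota(\tilde\psi)\in\Gsi(\Omega)$, so Lemma~\ref{coneshrink} yields $\xi_0\notin\Sigma_g^{\ast}(\iota(\tilde\psi)uf)$, i.e.\ $(x_0,\xi_0)\notin WF_{g,\ast}(uf)$.

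For $(ii)$, keep $\psi$ as above and choose $\tilde\psi_1,\tilde\psi_2\in\Ds(\Omega)$ with $\tilde\psi_1\equiv 1$ near $x_0$, $\tilde\psi_2\equiv 1$ on an open neighborhood $V$ of $\operatorname{supp}\tilde\psi_1$, and $\operatorname{supp}\tilde\psi_2\subset\{\psi\equiv 1\}$. For any representative $(f_\varepsilon)_\varepsilon$ of $f$, set $g_\varepsilon:=\tilde\psi_2 f_\varepsilon$, a compactly supported representative of $\iota(\tilde\psi_2)f$. Applying Lemma~\ref{coneshrink} with $u=\iota(\tilde\psi_2)\in\Gsi(\Omega)$ gives $\xi_0\notin\Sigma_g^{\ast}(\iota(\tilde\psi_2)f)$, so $\widehat{g_\varepsilon}$ satisfies the required conic decay in some open cone $\Gamma\ni\xi_0$. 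Using that $P(D)$ is local on $\Es(\Omega)$ (if $h\in\Es(\Omega)$ vanishes on an open set $U$, then so do all $D^\alpha h$, and hence $P(D)h=\sum a_\alpha D^\alpha h$) and that $(1-\tilde\psi_2)f_\varepsilon$ vanishes on $V\supset\operatorname{supp}\tilde\psi_1$, one obtains the key identity
\[
\tilde\psi_1 P(D)f_\varepsilon\;=\;\tilde\psi_1 P(D)g_\varepsilon\qquad(\varepsilon\in(0,1]).
\]

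It then remains to estimate $\widehat{\tilde\psi_1 P(D)g_\varepsilon}(\xi)=(2\pi)^{-d}\int_{\R^d} \widehat{\tilde\psi_1}(\xi-\eta)P(\eta)\widehat{g_\varepsilon}(\eta)\,\deta$ for $\xi$ in a smaller cone $\Gamma'$ with $\overline{\Gamma'}\setminus\{0\}\subset\Gamma$. Split the integral at $|\xi-\eta|=c|\xi|$, where $c\in(0,1)$ is so small that $|\xi-\eta|\leq c|\xi|$ and $\xi\in\Gamma'$ force $\eta\in\Gamma$. In the near region ($\eta\in\Gamma$, $|\eta|\geq(1-c)|\xi|$), combine the conic decay of $\widehat{g_\varepsilon}$ with the symbol bound $|P(\eta)|\leq C_L\, e^{M(L|\eta|)}$; applying $(M.2)$ in the form $M(Ls)\leq \tfrac{1}{2}M(s/h_1)+\tfrac{1}{2}\log A$ (valid once $LH\leq 1/h_1$) absorbs the $P$-factor, and integration against $\widehat{\tilde\psi_1}\in L^1$ yields a bound $C\,e^{-M(|\xi|/h')+M(k_1/\varepsilon)}$. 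In the far region ($|\xi-\eta|>c|\xi|$), use the Paley-Wiener-Komatsu decay of $\widehat{\tilde\psi_1}$ (from $\tilde\psi_1\in\De(\Omega)$) and the global moderate Paley-Wiener decay of $\widehat{g_\varepsilon}$ (from the moderate bound on $(g_\varepsilon)_\varepsilon$) together with the $P$-bound; splitting $-M(|\xi-\eta|/h_2)$ into two halves, converting one half into $-M(c|\xi|/(h_2 H))$ via $(M.2)$ and $|\xi-\eta|\geq c|\xi|$, and keeping the other for $\eta$-integrability, again produces a bound $C\,e^{-M(|\xi|/h'')+M(k'/\varepsilon)}$. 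Adding the two regional contributions gives $\xi_0\notin\Sigma_g^{\ast}(\iota(\tilde\psi_1)P(D)f)$, hence $(x_0,\xi_0)\notin WF_{g,\ast}(P(D)f)$.

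The main obstacle is the quantifier bookkeeping between the Beurling and Roumieu settings: in the Beurling case the exponent $L$ controlling $P$ is fixed while the $h$'s in the conic, Paley-Wiener, and moderate decay estimates can be shrunk arbitrarily, so one chooses the $h$'s small enough to beat $L$; in the Roumieu case the $h$'s are fixed by the available decay estimates but $L$ may be made arbitrarily small, so one shrinks $L$ instead. With this sequencing the estimates above go through uniformly in both cases.
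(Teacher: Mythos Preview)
Your argument for $(i)$ is the same as the paper's (the paper simply says ``follows from Lemma~\ref{coneshrink}''; you have merely spelled out the nested cut-off that makes this work).

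For $(ii)$ your route is correct but differs from the paper's. The paper first isolates the elementary inclusion
\[
\Sigma_g^{\ast}(P(D)u)\subseteq\Sigma_g^{\ast}(u),\qquad u\in\Gss_c(\Omega),
\]
which is immediate since $\widehat{P(D)u_\varepsilon}(\xi)=P(\xi)\widehat{u_\varepsilon}(\xi)$ and the symbol bound $|P(\xi)|\le C_L e^{M(L|\xi|)}$ is absorbed by one application of $(M.2)$ (so no convolution is needed here). It then uses the sheaf-morphism property of $P(D)$ to write $\iota(\chi)P(D)f=\iota(\chi)P(D)(\iota(\psi)f)$ and concludes via Lemma~\ref{coneshrink}. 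You instead fuse these two steps into a single estimate on $\widehat{\tilde\psi_1 P(D)g_\varepsilon}=(2\pi)^{-d}\,\widehat{\tilde\psi_1}*(P\,\widehat{g_\varepsilon})$, reproducing the near/far splitting of Lemma~\ref{coneshrink} with the extra factor $P(\eta)$ carried along. This works, and your quantifier discussion (shrink $h$ in the Beurling case, shrink $L$ in the Roumieu case) is exactly what is required; but it re-does by hand a convolution estimate that the paper avoids by separating the pure Fourier-multiplier step from the cut-off step. The paper's factoring is shorter and makes the role of Lemma~\ref{coneshrink} more transparent, while your direct estimate is self-contained and does not rely on invoking the sheaf-morphism property abstractly.
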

\begin{proof}
The first assertion follows from Lemma \ref{coneshrink}.  Let us prove $(ii)$. One readily shows that
\begin{equation}
\Sigma_g^*(P(D)u) \subseteq \Sigma_g^*(u), \qquad \forall u \in \Gss_c(\Omega).  
\label{ultraoperator}
\end{equation}  
Now suppose $(x_0, \xi_0) \notin WF_{g,\ast}(f)$. Then, there is $\psi \in \Ds(\Omega)$, with $\psi \equiv 1$ in a neighborhood of $x_0$, such that $\xi_0 \notin \Sigma_g^*(\iota(\psi)f)$. Choose $\chi \in \Ds(\Omega)$ such that $\chi \equiv 1$ in a neighborhood of $x_0$ and $\psi \equiv 1$ in a neighborhood of $\operatorname{supp} \chi$. Since $P(D): \Gss \rightarrow \Gss$ is a sheaf morphism, we have $\iota(\chi)P(D)f = \iota(\chi)P(D)(\iota(\psi) f)$ and thus, by Lemma \ref{coneshrink} and inclusion \eqref{ultraoperator},
\[ \xi_0 \notin \Sigma_g^*(\iota(\psi)f) \supseteq \Sigma_g^*(P(D)(\iota(\psi)f))  \supseteq \Sigma_g^*(\iota(\chi)P(D)(\iota(\psi)f)) =   \Sigma_g^*(\iota(\chi)P(D)f). \]   
\end{proof}
We now compare $WF_{g,\ast}(f)$ with its classical counterpart for ultradistributions. We follow the standard definition \cite{Komatsumla, Pilipovic} for the wave front set $WF_{\ast}(f)$ of $f\in{\Ds}'(\Omega)$. Recall that  $WF_{\ast}(f)$  is the complement in $\Omega \times ( \R^d  \backslash \{ 0 \})$ of the set of all $(x_0, \xi_0) $ for which there are an open conic neighborhood $\Gamma$ of $\xi_0$ and $\psi \in \Ds(\Omega)$, with $\psi \equiv 1$ in a neighborhood of $x_0$, such that for every $\lambda > 0$ (for some $ \lambda >0$)
\[  \sup_{\xi \in \Gamma} |\widehat{\psi f}(\xi)| e^{M(|\xi|/\lambda)} < \infty.  \]
We have the the following equality:
\begin{theorem}
Let $f \in {\Ds}'(\Omega)$. Then,  $WF_*(f) = WF_{g,\ast}(\iota(f))$. 
\end{theorem}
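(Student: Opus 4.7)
The plan is to prove both inclusions via Fourier analysis, using the net $g_\varepsilon := (\psi f)\ast\phi_\varepsilon$ as the bridge between $\widehat{\psi f}$ and representatives of $\iota(\psi)\iota(f)$. A preliminary identification $\iota(\psi)\iota(f) = \iota_c(\psi f)$ in $\Gss(\Omega)$ is obtained via a standard mollifier-commutator estimate: after choosing a cutoff $\chi$ with $\chi\equiv 1$ on $\operatorname*{supp}\psi$ and Taylor-expanding $\psi$ in $\psi(x)(\chi f)\ast\phi_\varepsilon(x) - (\psi f)\ast\phi_\varepsilon(x) = \langle \chi f(y),(\psi(x)-\psi(y))\phi_\varepsilon(x-y)\rangle$, the moment-vanishing $\int x^\alpha\phi_\varepsilon(x)\dx = 0$ for $|\alpha|\geq 1$ (a direct consequence of $\widehat\phi\equiv 1$ near $0$) together with Proposition \ref{nullchar} places the difference in $\Ns(\Omega)$. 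Combined with the decay of $g_\varepsilon$ off $\operatorname*{supp}\psi$ already used in Proposition \ref{support}, this produces a compactly supported representative $h_\varepsilon := \tilde\psi\cdot g_\varepsilon$ of $\iota(\psi)\iota(f)$, with $h_\varepsilon - g_\varepsilon\in\Ns(\Omega)$. Crucially $\widehat{g_\varepsilon}(\xi) = \widehat{\psi f}(\xi)\widehat\phi(\varepsilon\xi)$, which equals $\widehat{\psi f}(\xi)$ for $|\xi|\leq 1/\varepsilon$ and vanishes for $|\xi|\geq 2/\varepsilon$; a cone-shrinking step (of the same flavor as the proof of Lemma \ref{coneshrink}) transfers cone-Fourier estimates between $\widehat{g_\varepsilon}$ and $\widehat{h_\varepsilon}$ at the cost of passing to a slightly smaller conic neighborhood.

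For $WF_{g,\ast}(\iota(f))\subseteq WF_\ast(f)$, I would assume $(x_0,\xi_0)\notin WF_{g,\ast}(\iota(f))$, witnessed by $\psi$ and a conic neighborhood $\Gamma$ of $\xi_0$ with $\sup_{\xi\in\Gamma}|\widehat{h_\varepsilon}(\xi)|e^{M(|\xi|/h)} = O(e^{M(k/\varepsilon)})$ in the pattern $\exists k\,\forall h$ (Beurling) or $\exists h\,\forall k$ (Roumieu). After cone-shrinking the same bound holds for $\widehat{g_\varepsilon}$ on a smaller cone $\Gamma'$, and substituting $\varepsilon = 1/|\xi|$ for $\xi\in\Gamma'$ with $|\xi|$ large yields $|\widehat{\psi f}(\xi)|\leq C\exp(-M(|\xi|/h)+M(k|\xi|))$. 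Now apply $(M.2)$ in the form $2M(t)\leq M(Ht)+\log A$: in the Beurling case, given a target $\lambda>0$ pick $h$ with $1/h\geq H\max(k,1/\lambda)$; in the Roumieu case (with $h$ fixed), pick $k=1/(hH)$ and $\lambda=hH$. Either yields $M(|\xi|/h)\geq M(k|\xi|)+M(|\xi|/\lambda)-\log A$, whence $|\widehat{\psi f}(\xi)|\leq C'e^{-M(|\xi|/\lambda)}$ on $\Gamma'$, so $(x_0,\xi_0)\notin WF_\ast(f)$.

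For $WF_\ast(f)\subseteq WF_{g,\ast}(\iota(f))$, I would assume $(x_0,\xi_0)\notin WF_\ast(f)$, witnessed by $\psi$ and cone $\Gamma$ with $|\widehat{\psi f}(\xi)|\leq C_\lambda e^{-M(|\xi|/\lambda)}$ on $\Gamma$ (every $\lambda>0$ in Beurling, some $\lambda_0$ in Roumieu). Since $|\widehat{g_\varepsilon}(\xi)|\leq\|\widehat\phi\|_\infty|\widehat{\psi f}(\xi)|$ on $\{|\xi|<2/\varepsilon\}$ and $\widehat{g_\varepsilon}(\xi)=0$ otherwise, choosing $h=\lambda$ (Beurling) or $h=\lambda_0$ (Roumieu) gives $\sup_{\xi\in\Gamma}|\widehat{g_\varepsilon}(\xi)|e^{M(|\xi|/h)}\leq\|\widehat\phi\|_\infty C_\lambda$, uniform in $\varepsilon$. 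Transferring to $h_\varepsilon$ via the cone-shrinking step, the analogous bound persists on a slightly smaller cone $\Gamma'\subset\Gamma$, which trivially is $O(e^{M(k/\varepsilon)})$. Hence $\xi_0\notin\Sigma_g^\ast(\iota(\psi)\iota(f))$.

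The main technical obstacle is two-fold: the commutator identification $\iota(\psi)\iota(f)=\iota_c(\psi f)$ (which extends Theorem \ref{main}(iii) beyond the $\Es\times\Es$ case stated there) and the cone-shrinking transfers between $g_\varepsilon$ and $h_\varepsilon$. Both are standard in microlocal analysis but require care in the ultradifferentiable setting, using Proposition \ref{nullchar}, the Paley-Wiener-Komatsu decay of $\widehat{\tilde\psi}$, and the $(M.2)$-based estimates. Once these are in place, the scaling argument $\varepsilon = 1/|\xi|$ combined with $(M.2)$ provides a direct, symmetric proof of both inclusions, paralleling the strategy used in the proof of Theorem \ref{mainreg}.
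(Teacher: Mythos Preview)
Your entire bridge rests on the identity $\iota(\psi)\iota(f) = \iota_c(\psi f)$ in $\Gss(\Omega)$, and this identity is \emph{false}. Take $f=\delta$ (supported at $0\in\Omega$) and any $\psi\in\Ds(\Omega)$ with $\nabla\psi(0)\neq 0$. Then $\iota(\psi)\iota(\delta)=\sigma(\psi)\iota_c(\delta)=[(\psi\,\phi_\varepsilon)_\varepsilon]$ while $\iota_c(\psi\delta)=\psi(0)[(\phi_\varepsilon)_\varepsilon]$, and the difference has representative $(\psi(x)-\psi(0))\phi_\varepsilon(x)$. Evaluating at $x=\varepsilon z$ for any fixed $z$ with $\phi(z)\neq 0$ gives magnitude $\sim|\nabla\psi(0)\cdot z|\,|\phi(z)|\,\varepsilon^{1-d}$, which certainly does not decay like $e^{-M(k/\varepsilon)}$; by Proposition~\ref{nullchar} the difference is not in $\Ns(\Omega)$. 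Your Taylor/moment-vanishing sketch cannot rescue this: the conditions $\int x^\alpha\phi\,\dx=0$ only annihilate integrals, not convolutions, and after Taylor-expanding $\psi(x)-\psi(y)$ each term is $((\partial^\alpha\psi)\chi f)\ast(z^\alpha\phi_\varepsilon)$, which is nonzero and in the ultradistribution scale grows like $e^{M(k/\varepsilon)}$ --- the prefactor $\varepsilon^{|\alpha|}$ can never compensate that. (This is exactly why Theorem~\ref{main}(iii) is stated only for $f,g\in\Es(\Omega)$: the extension you invoke does not exist.)

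The paper avoids this problem entirely. It never asserts $\iota(\psi)\iota(f)=\iota_c(\psi f)$; instead it uses the representative $(\psi\,(\chi f\ast\phi_\varepsilon))_\varepsilon$ of $\iota(\psi)\iota(f)$, obtained directly from $\iota(\psi)=\sigma(\psi)$ and the sheaf property $\iota(f)_{|\operatorname{supp}\psi}=\iota(\chi f)_{|\operatorname{supp}\psi}$. Its Fourier transform is a convolution $\widehat\psi\ast(\widehat{\chi f}\,\widehat\phi(\varepsilon\,\cdot\,))$, not a product, and the inclusion $WF_{g,\ast}(\iota(f))\subseteq WF_\ast(f)$ is obtained by splitting this convolution into near-cone and far-cone parts as in Lemma~\ref{coneshrink}. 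For the reverse inclusion the paper writes $\widehat{\psi f}(\xi)=\langle \chi f - u_\varepsilon,\psi\,e^{i\xi\cdot x}\rangle+\widehat{\psi u_\varepsilon}(\xi)$, controls the first summand uniformly via Corollary~\ref{coro}, and then runs the $(M.2)$-based scaling argument from Theorem~\ref{mainreg}. Your substitution $\varepsilon=1/|\xi|$ is indeed the mechanism behind that last step, but it has to be applied to the correct representative $\psi u_\varepsilon$, not to $g_\varepsilon=(\psi f)\ast\phi_\varepsilon$.
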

\begin{proof}
Let $(x_0, \xi_0) \notin WF_*(f)$. Find an open conic neighborhood $\Gamma$ of $\xi_0$ and $\chi \in \Ds(\Omega)$, with $\chi \equiv 1$ in a neighborhood of $x_0$, such that for every $\lambda > 0$ (for some $ \lambda >0$)
\[  \sup_{\xi \in \Gamma} |\widehat{\chi f}(\xi)| e^{M(|\xi|/\lambda)} < \infty.  \]
Choose $\psi \in \Ds(\Omega)$ such that $\psi \equiv 1$ in a neighborhood of $x_0$ and $\chi \equiv 1$ in a neighborhood of $\operatorname{supp} \psi$. We show that $\xi_0 \notin \Sigma_g^*(\iota(\psi)\iota(f))$. Theorem \ref{main} gives $\iota(\psi)\iota(f) = \iota(\psi)\iota(\chi f)= [(\psi(\chi f \ast \phi_\varepsilon))_\varepsilon]$. By \cite[Lemm. 3.3]{Komatsu} we have that for every $h > 0$ (for some $h > 0$)
\[  \sup_{\xi \in \R^d} |\widehat{\psi}(\xi)| e^{M(|\xi|/h)} < \infty,\]
and, by \cite[Thrm. 2.4]{Cho}, that for some $k > 0$ (for every $k > 0$)
\[  \sup_{\xi \in \R^d} |\widehat{\chi f}(\xi)| e^{-M(k|\xi|)} < \infty.\]
Select an open conic neighborhood $\Gamma_1$ of $\xi_0$ exactly as in the proof of Lemma \ref{coneshrink}. We then have
\[ (\exists k > 0)(\forall h,\lambda > 0)(\exists C, C' > 0)( \forall \varepsilon \leq 1)(\forall \xi \in \Gamma_1)  \]
\[ ((\exists h,\lambda > 0)(\forall k > 0)(\exists C, C' > 0)( \forall  \varepsilon \leq 1)(\forall \xi \in \Gamma_1))\]
\begin{align*}
&|\Ft(\psi(\chi f \ast \phi_\varepsilon))(\xi)| \\ 
&\leq \frac{1}{(2\pi)^d}\left(\int_{|\eta| \leq c|\xi|} + \int_{|\eta| > c|\xi|} \right) |\widehat{\psi}(\eta)| |\widehat{\chi f}(\xi - \eta)| |\widehat{\phi}(\varepsilon(\xi - \eta))| \deta \\ \\
& \leq  \frac{\| \widehat{\phi}\|_{L^\infty}\| \widehat{\psi}\|_{L^1}}{(2\pi)^d} \sup_{|\xi-\eta| \leq c|\xi|}|\widehat{\chi f}(\eta)| + \frac{C}{(2\pi)^d} e^{-M(c|\xi|/h)}\int_{\R^d}e^{M(k|t|)} |\widehat{\phi}(\varepsilon t)| \dt \\ \\
& \leq  \frac{C'\| \widehat{\phi}\|_{L^\infty}\| \widehat{\psi}\|_{L^1}}{(2\pi)^d}e^{-M((1-c)|\xi|/\lambda)} + \frac{C\|\widehat{\phi}\|_{L^{1}}}{(2\pi \varepsilon)^d} e^{-M(c|\xi|/h) + M(2k/\varepsilon)},
\end{align*}
which shows that $(x_0,\xi_o)\notin WF_{g,\ast}(\iota(f))$. Conversely, let $(x_0, \xi_0) \notin WF_{g,\ast}(\iota(f))$. Find $\psi \in \Df(\Omega)$, with $\psi \equiv 1$ in a neighborhood of $x_0$,  such that $\xi_0 \notin \Sigma_g^*(\iota(\psi)\iota(f))$.
Let $\chi \in \Ds(\Omega)$ such that $\chi \equiv 1$ in a neighborhood of $\operatorname{supp} \psi$. Since we have a sheaf monomorphism ${\Ds}' \rightarrow \Gss$, we conclude that $\iota(\psi)\iota(f) = \iota(\psi)\iota(\chi f)= [(\psi u_\varepsilon)_\varepsilon]$, where $(u_\varepsilon)_\varepsilon$ is a compactly supported representative of $\iota(\chi f)$. Hence, there is an open conic neighborhood $\Gamma$  of $\xi_0$ such that
\[( \exists k > 0)(\forall h > 0) \qquad  (( \exists h > 0)(\forall k > 0))    \]
\begin{equation}
\sup_{\xi \in \Gamma}|\widehat{\psi u_\varepsilon}(\xi)|e^{M(|\xi|/h)} = O(e^{M(k/\varepsilon)}). 
\label{boundwf}
\end{equation}
Note that
\[ \widehat{\psi f}(\xi) =  \widehat{\psi \chi f}(\xi) =  \ \langle \chi f(x) - u_\varepsilon(x), \psi(x)e^{i\xi \cdot x}\rangle + \widehat{\psi u }_\varepsilon(\xi), \qquad \forall \xi \in \R^d, \forall \varepsilon \in (0,1].  \]
Similarly as in the proof of Theorem \ref{mainreg}, but using relation \eqref{boundwf} instead of Lemma \ref{Pwsalg}, one can show that for every $\lambda > 0$ (resp. for some $ \lambda >0$)
\[  \sup_{\xi \in \Gamma} |\widehat{\psi f}(\xi)| e^{M(|\xi|/\lambda)} < \infty;  \]
we leave details to the reader. This concludes the proof of the theorem.
\end{proof}

\section{Embeddings of Beurling-Bj\"orck ultradistributions into algebras}\label{section weight functions}
In this last section we outline how our ideas from the previous Sections \ref{section-embedding}--\ref{section wavefronts} can be adapted to develop an analogous nonlinear theory for ultradistributions defined via weight functions. We follow here the Beurling-Bj\"{o}rck approach to ultradistribution theory \cite{Bjorck} (see also \cite{b-m-t,Cior}). 

\subsection{Spaces defined via weight functions}  We review in this preparatory subsection some basic properties of the Beurling-Bj\"{o}rck spaces. We always assume that $\omega:[0,\infty)\to[0,\infty)$ is a non-decreasing weight function satisfying $\omega(0)=0$ and the ensuing three conditions:
\begin{itemize}
\item[$(\alpha)$]  $\omega(t_1 + t_2) \leq \omega(t_1) + \omega(t_2), \quad \forall t_1,t_2\geq0,$
\item[$(\beta)$] $\displaystyle \int_{1}^{\infty} \frac{\omega(t)}{t^{2}} \mathrm{d}t < \infty,$
\item[$(\gamma)$]  $\omega(t) \geq b\log(1+t) + a, \quad \forall t\geq 0$, for some $a \in \R$ and $b > 0$. \end{itemize}
The meaning of these three conditions is explained in \cite{Bjorck}. 

Let $\Omega \subseteq \R^d$ be open, $K \Subset\Omega$ and $\lambda>0$. The Banach space $\mathcal{D}_{\omega}^\lambda(K)$ consists of those $\phi  \in L^{1}(\R^d)$ such that $\operatorname{supp} \phi \subseteq K$ and
\begin{equation}\label{eq norm 1}
\| \phi \|_\lambda= \| \phi \|_{\mathcal{F}L^1_\omega,\lambda}:= \int_{\R^d} |\widehat{\phi}(\xi)|e^{\lambda \omega(|\xi|)} d\xi < \infty.
\end{equation}
Set further
\[ \mathcal{D}_{(\omega)}(\Omega) = \varinjlim_{K \Subset \Omega} \varprojlim_{\lambda \to \infty} \mathcal{D}_{\omega}^\lambda(K), \quad \mathcal{D}_{\{\omega\}}(\Omega) = \varinjlim_{K \Subset  \Omega} \varinjlim_{\lambda \to 0} \mathcal{D}_{\omega}^\lambda(K).\]
Condition $(\gamma)$ yields $\mathcal{D}_{(\omega)}(\Omega) \subseteq \mathcal{D}(\Omega)$. In order to ensure that $\mathcal{D}_{\{\omega\}} (\Omega)\subseteq \mathcal{D}(\Omega)$, we impose the following additional condition on $\omega$ in the Roumieu case:
\begin{itemize}
\item[$(\gamma_0)$] $\displaystyle\lim_{t\to\infty} \frac{\omega(t)}{\log (1 + t)}=\infty$.
\end{itemize}
Their duals $\mathcal{D}'_{(\omega)}(\Omega)$ and $\mathcal{D}'_{\{\omega\}}(\Omega)$  are the ultradistribution space of class $(\omega)$ (Beurling type) and class $\{\omega\}$ (Roumieu type), respectively. We write in this  $\ast=(\omega)$ or $\{\omega \}$ in order to treat both cases simultaneously. Note that if $\omega(t)=\log(1+t)$, one then recovers the classical Schwartz spaces as particular instances of the Beurling case.

The space $\mathcal{E}_{\ast}(\Omega)$ of $\ast$-ultradifferentiable functions is defined  as the space of multipliers of $\mathcal{D}_{\ast}(\Omega)$, that is, 
 $$
 \mathcal{E}_{\ast}(\Omega) = \{ f \in \mathcal{D}'_{\ast}(\Omega) \, : \, f\phi \in \mathcal{D}_{\ast}(\Omega) \mbox { for all } \phi \in \mathcal{D}_{\ast}(\Omega)\}.
$$
Naturally,  $\mathcal{E}_{\ast}(\Omega)\subseteq C^{\infty}(\Omega)$, with equality if $\omega(t)=\log(1+t)$ and $\ast=(\omega)$. 
We endow $\mathcal{E}_{\ast}(\Omega)$ with the coarsest topology for which all the mappings $f \to f\phi$, $\phi \in \mathcal{D}_{\ast}(\Omega)$, are continuous. More explicitly,
 $$
 \mathcal{E}_{(\omega)}(\Omega)= \varprojlim_{K \Subset \Omega} \varprojlim_{\lambda \to \infty} \mathcal{E}_{\omega}^\lambda(K) , \quad  \mathcal{E}_{\{\omega\}}(\Omega)= \varprojlim_{K \Subset \Omega} \varinjlim_{\lambda \to 0} \mathcal{E}_{\omega}^\lambda(K),
 $$
 where  
$$
\mathcal{E}_{\omega}^\lambda(K) = \{ f \in C^{\infty}(\Omega) : \, \|f\|_{\mathcal{E}_{\omega}^\lambda(K)}:= \inf\{ \| \phi \|_\lambda \, : \, \phi  \in \mathcal{D}(\Omega) \mbox{ with } f_{|K}= \phi_{|K}\}< \infty \}.
$$
The dual $\mathcal{E}'_{\ast}(\Omega)$ is then the subspace of $\mathcal{D}'_{\ast}(\Omega)$ consisting of compactly supported $\ast$-ultradistributions. 

We end this subsection with a technical remark about the family of norms (\ref{eq norm 1}). This remark will play an important role for our estimates in the next subsections.

\begin{remark} \label{L2estimate}
The topology on $\mathcal{D}_{\ast}(\Omega)$ can be equivalently induced by Fourier-Lebesgue $L^\infty$- or $L^2$-type norms. More precisely, define
$$\|\phi \|_{\mathcal{F}L^\infty_\omega,\lambda}:= \sup_{\xi \in \R^d} |\widehat{\phi}(\xi)|e^{\lambda \omega(|\xi|)},\quad \| \phi \|_{\mathcal{F}L^2_\omega,\lambda}:= \left(\int_{\R^d} |\widehat{\phi}(\xi)|^2 e^{2\lambda \omega(|\xi|)} d\xi \right)^{1/2}.
$$
For all compactly supported $\phi \in L^{1}(\mathbb{R}^{d})$, we have 
\begin{equation}
C_1\|\phi \|_{\mathcal{F}L^\infty_\omega,\lambda} \leq \|\phi \|_{\mathcal{F}L^1_\omega,\lambda} \leq C_2\|\phi \|_{\mathcal{F}L^\infty_\omega,\lambda + \Lambda}, \qquad \lambda > 0,
\label{equivalentnorms}
\end{equation}
for $\Lambda = (d+1)/b$, where $b$ is the constant ocurring in $(\gamma)$, and some $C_1=C_{1,\lambda}>0$ and $C_2>0$; the first inequality is non-trivial and follows by inspection in the proof of \cite[Thrm.\ 1.4.1]{Bjorck}, while the second one follows directly from $(\gamma)$. Similarly, we have 
$$
C'_1\|\phi \|_{\mathcal{F}L^2_\omega,\lambda} \leq \|\phi \|_{\mathcal{F}L^1_\omega,\lambda} \leq C'_2\|\phi \|_{\mathcal{F}L^2_\omega,\lambda + \Lambda/2}, \qquad \lambda > 0,
$$
for some $C'_1=C'_{1,\lambda}>0$ and $C'_2>0$; the first inequality being a consequence of \eqref{equivalentnorms},
 while the second one follows from the Cauchy-Schwartz inequality and $(\gamma)$. Moreover, if $\omega$ satisfies $(\gamma_0)$, the following inequalities hold, which are sharper for small values of $\lambda$,
$$
 C_1\|\phi \|_{\mathcal{F}L^\infty_\omega,\lambda} \leq \|\phi \|_{\mathcal{F}L^1_\omega,\lambda} \leq C_2\|\phi \|_{\mathcal{F}L^\infty_\omega,2\lambda}, \qquad \lambda > 0,
$$
and
$$
C'_1\|\phi \|_{\mathcal{F}L^2_\omega,\lambda} \leq \|\phi \|_{\mathcal{F}L^1_\omega,\lambda} \leq C'_2\|\phi \|_{\mathcal{F}L^2_\omega,2\lambda}, \qquad \lambda > 0,
$$
for some constants depending on $\lambda$. 
\end{remark}

\subsection{Algebras of generalized functions of class $(\omega)$ and $\{\omega\}$} For an open subset $\Omega$ of $\R^d$,  we define the differential algebras (with pointwise multiplication of nets)
\begin{align*}
\mathcal{E}_{(\omega), \mathcal{M}}(\Omega) = \{ (f_\varepsilon)_\varepsilon \in \mathcal{E}_{(\omega)}(\Omega)^{(0,1]} \, : \, &(\forall K \Subset \Omega) (\forall \lambda > 0) (\exists k > 0) \\
 &\|f_\varepsilon\|_{\mathcal{E}^\lambda_{\omega}(K)}  = O(e^{k\omega(1/\varepsilon)})\},
\end{align*}
\begin{align*}
\mathcal{E}_{\{\omega\}, \mathcal{M}}(\Omega) = \{ (f_\varepsilon)_\varepsilon \in \mathcal{E}_{\{\omega\}}(\Omega)^{(0,1]} \, : \, &(\forall K \Subset \Omega) (\forall k > 0) (\exists \lambda > 0) \\
 &\|f_\varepsilon\|_{\mathcal{E}^\lambda_{\omega}(K)}  = O(e^{k\omega(1/\varepsilon)})\},
\end{align*}
and their ideals
\begin{align*}
\mathcal{E}_{(\omega), \mathcal{N}}(\Omega) =\{ (f_\varepsilon)_\varepsilon \in \mathcal{E}_{(\omega)}(\Omega)^{(0,1]} \, : \, &(\forall K \Subset \Omega) (\forall k > 0) (\forall \lambda > 0) \\
 &\|f_\varepsilon\|_{\mathcal{E}^\lambda_{\omega}(K)}  = O(e^{-k\omega(1/\varepsilon)})\},
 \end{align*}
 \begin{align*}
\mathcal{E}_{\{\omega\}, \mathcal{N}}(\Omega) = \{ (f_\varepsilon)_\varepsilon \in \mathcal{E}_{\{\omega\}}(\Omega)^{(0,1]} \, : \, &(\forall K \Subset \Omega) (\exists k > 0) (\exists \lambda > 0) \\
 &\|f_\varepsilon\|_{\mathcal{E}^\lambda_{\omega}(K)}  = O(e^{-k\omega(1/\varepsilon)})\}.
\end{align*}

The corresponding algebra  $\mathcal{G}_\ast(\Omega)$ of \emph{generalized functions of class} $\ast$ ($\ast$-generalized functions) is  the factor algebra
\[ \mathcal{G}_\ast(\Omega) = \mathcal{E}_{\ast, \mathcal{M}}(\Omega)/\mathcal{E}_{\ast, \mathcal{N}}(\Omega). \] 
Since the partial derivatives are continuous operators on $\mathcal{E}_{\ast}(\Omega)$, $\mathcal{G}_\ast(\Omega)$  is a differential algebra, with the action of differential operators being defined as $
\partial^{\alpha} [(f_\varepsilon)_\varepsilon] = [(\partial^{\alpha}f_\varepsilon)_\varepsilon],$ $\alpha\in\mathbb{N}^{d}$. 
Note that $\mathcal{E}_{\ast}(\Omega)$ is a subalgebra of $\mathcal{G}_\ast(\Omega)$ via the constant canonical embedding
\begin{equation}\label{eq constant embedding 2}
 \sigma(f) = [(f)_\varepsilon], \qquad \forall f \in \mathcal{E}_{\ast}(\Omega). 
 \end{equation}
As in the case of weight sequences discussed in Section \ref{section algebra}, the functor $\Omega \rightarrow \mathcal{G}_\ast(\Omega)$ is a fine and supple but non-flabby sheaf of differential algebras on any open subset of $\R^d$.

It is worth noticing that if $\omega(t)=\log (1+t)$, the algebra
$\mathcal{G}_{(\omega)}(\Omega)$ coincides with the Colombeau special
algebra \cite{GGK}. Since we will embed $\mathcal{D}_{\ast}(\Omega)$ into
$\mathcal{G}_{\ast}(\Omega)$ in the next subsection, our considerations with
weight functions therefore provide a unified nonlinear approach to treat
distributions and ultradistributions simultaneously.

 We also have a null characterization of the negligible nets. 

 \begin{proposition} \label{nullcharweights}
 Let $(f_\varepsilon)_\varepsilon \in \mathcal{E}_{\ast, \mathcal{M}}(\Omega)$. Then, $(f_\varepsilon)_\varepsilon \in \mathcal{E}_{\ast, \mathcal{N}}(\Omega)$ if and only if 
\[(\forall K \Subset \Omega) ( \forall k > 0)\qquad ((\forall K \Subset \Omega) ( \exists k > 0))    \]
\[ \sup_{x \in K}|f_\varepsilon(x)| = O(e^{-k\omega(1/\varepsilon)}). \]
 \end{proposition}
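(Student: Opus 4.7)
The direct implication is immediate. For the converse, the plan is to replace the Landau--Kolmogorov derivative interpolation used in Proposition~\ref{nullchar} by a H\"older interpolation performed directly on the Fourier side, since the norm $\|\cdot\|_{\mathcal{E}^\lambda_\omega(K)}$ is a Fourier--Lebesgue norm by definition.

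I would fix $K\Subset\Omega$, choose $\psi\in\mathcal{D}_{\ast}(\Omega)$ with $\psi\equiv 1$ on a neighborhood of $K$, put $K'=\operatorname{supp}\psi$ and $g_\varepsilon:=\psi f_\varepsilon$. Since $g_\varepsilon$ is a compactly supported extension of $f_\varepsilon|_K$, one has $\|f_\varepsilon\|_{\mathcal{E}^\lambda_\omega(K)}\leq \|g_\varepsilon\|_{\mathcal{F}L^1_\omega,\lambda}$, so it is enough to control the latter. Two a priori bounds feed the interpolation. On the one hand, the hypothesis (applied to $K'$) yields $\|g_\varepsilon\|_{L^1}=O(e^{-k\omega(1/\varepsilon)})$ with $k$ free in the Beurling case and $k$ given in the Roumieu case, whence the uniform bound $\|\widehat{g_\varepsilon}\|_{L^\infty}=O(e^{-k\omega(1/\varepsilon)})$. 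On the other hand, since the subadditivity condition $(\alpha)$ makes $\mathcal{F}L^1_\omega$ a Banach algebra under pointwise multiplication, the $\ast$-moderateness of $(f_\varepsilon)_\varepsilon$ transfers to $(g_\varepsilon)_\varepsilon$, and Remark~\ref{L2estimate} turns this into a pointwise decay
\[
|\widehat{g_\varepsilon}(\xi)|\;\leq\; C\,e^{-\lambda_0\omega(|\xi|)+k_0\omega(1/\varepsilon)}
\]
with the corresponding Beurling/Roumieu quantifier pattern on $(\lambda_0,k_0)$.

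The key step is the elementary interpolation
\[
\|g_\varepsilon\|_{\mathcal{F}L^\infty_\omega,\theta\lambda_0}\;\leq\;\|g_\varepsilon\|_{\mathcal{F}L^\infty_\omega,\lambda_0}^{\theta}\,\|\widehat{g_\varepsilon}\|_{L^\infty}^{1-\theta},\qquad \theta\in[0,1],
\]
obtained by writing $|\widehat{g_\varepsilon}(\xi)|e^{\theta\lambda_0\omega(|\xi|)}=(|\widehat{g_\varepsilon}(\xi)|e^{\lambda_0\omega(|\xi|)})^{\theta}\,|\widehat{g_\varepsilon}(\xi)|^{1-\theta}$ and taking suprema in $\xi$. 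Feeding both input bounds at $\theta=1/2$ produces an $\mathcal{F}L^\infty_\omega$ bound at level $\lambda_0/2$ with decay rate $(k-k_0)\omega(1/\varepsilon)/2$. In the Beurling case, given any $\lambda,k'>0$, I would pick $\lambda_0:=2(\lambda+\Lambda)$ with $\Lambda=(d+1)/b$, take the corresponding $k_0$, and exploit the Beurling freedom to apply the hypothesis with any $k\geq k_0+2k'$; the general conversion $\|\phi\|_{\mathcal{F}L^1_\omega,\lambda}\lesssim\|\phi\|_{\mathcal{F}L^\infty_\omega,\lambda+\Lambda}$ from Remark~\ref{L2estimate} then closes the case.

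The Roumieu case is where I expect the main subtlety. Starting from the $k>0$ given by the hypothesis, the choice $k_0=k/2$ produces, via Roumieu moderateness, some $\lambda_0>0$; crucially, the definition provides no lower control on $\lambda_0$, so a direct splitting $\int_{|\xi|\leq R}+\int_{|\xi|>R}$ applied to $\|g_\varepsilon\|_{\mathcal{F}L^1_\omega,\lambda}$ would fail at the convergence step as soon as $\lambda_0\leq d/b$. My proposal is to avoid that step entirely by relying on the sharper $(\gamma_0)$-dependent inequality $\|\phi\|_{\mathcal{F}L^1_\omega,\lambda}\lesssim \|\phi\|_{\mathcal{F}L^\infty_\omega,2\lambda}$ of Remark~\ref{L2estimate}, whose availability is precisely one of the reasons why $(\gamma_0)$ is standing in the Roumieu setting. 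The $\theta=1/2$ interpolation gives $\|g_\varepsilon\|_{\mathcal{F}L^\infty_\omega,\lambda_0/2}=O(e^{-k\omega(1/\varepsilon)/4})$, and the sharper conversion delivers $\|g_\varepsilon\|_{\mathcal{F}L^1_\omega,\lambda_0/4}=O(e^{-k\omega(1/\varepsilon)/4})$ with $\lambda_0/4>0$, which is exactly the required Roumieu negligible estimate.
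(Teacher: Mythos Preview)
Your argument is correct and is essentially the same approach as the paper's: both localize via a cut-off $\psi$, feed the sup bound on $f_\varepsilon$ into $\|\widehat{\psi f_\varepsilon}\|_{L^\infty}$, feed moderateness into a weighted Fourier norm, interpolate on the Fourier side, and invoke the norm conversions of Remark~\ref{L2estimate} (in particular the $(\gamma_0)$-dependent one in the Roumieu case). The only difference is cosmetic: the paper packages the interpolation as two clean ``Landau-type'' inequalities $\|f\|_{\mathcal{E}^\lambda_\omega(K)}\leq C\,(\sup_{K'}|f|)^{1/2}\,\|f\|_{\mathcal{E}^{2\lambda+\Lambda}_\omega(K')}^{1/2}$ (resp.\ $\|f\|_{\mathcal{E}^{4\lambda}_\omega(K')}^{1/2}$ under $(\gamma_0)$), derived via the $\mathcal{F}L^2$ route $\|g\|_{\mathcal{F}L^2_\omega,\mu}^2\leq \|\widehat{g}\|_{L^\infty}\|g\|_{\mathcal{F}L^1_\omega,2\mu}$, whereas you perform the interpolation pointwise in $\mathcal{F}L^\infty_\omega$; the resulting exponents ($\lambda_0/4$ and $k/4$ in the Roumieu case) coincide.
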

\begin{proof}  This follows from the following observations, two Landau type inequalities. Let $K \Subset K' \Subset \Omega$. Then for each $\lambda > 0$ there is $C>0$ 
$$
\|f\|_{\mathcal{E}^\lambda_\omega(K)} \leq C\sup_{x \in K'}|f(x)|^{1/2} \|f\|^{1/2}_{\mathcal{E}^{2\lambda + \Lambda}_\omega(K')}, \quad f \in \mathcal{E}_\ast(\Omega),
$$ 
where $\Lambda$ is the constant from Remark \ref{L2estimate}. If in addition $\omega$ satisfies $(\gamma_0)$, then for each $\lambda > 0$ there is $C>0$ such that
$$
\|f\|_{\mathcal{E}^\lambda_\omega(K)} \leq C\sup_{x \in K'}|f(x)|^{1/2} \|f\|^{1/2}_{\mathcal{E}^{4\lambda}_\omega(K')}.
$$ 
We  show the first inequality, the second one can be proved in a similar fashion. Let $\psi \in \mathcal{D}_{(\omega)}(\Omega)$ with $\psi \equiv 1$ in a neighbourhood of $K$ and $\operatorname{supp} \psi \subseteq K'$. Let $\phi \in \mathcal{D}_{(\omega)}(\Omega)$ with $\phi_{|K'} = f_{|K'}$ be arbitrary. Then Remark \ref{L2estimate} implies that\begin{align*}
\|f\|_{\mathcal{E}^\lambda_\omega(K)} &\leq C\| \phi \psi\|_{\mathcal{F}L^2_\omega, \lambda + \Lambda/2} \\
&\leq C\| \widehat{\phi\psi}\|^{1/2}_{L^\infty} \| \phi \psi\|^{1/2}_{\mathcal{F}L^1_\omega, 2\lambda + \Lambda} \\
&\leq C\|\psi\|^{1/2}_{L^1} \|\psi\|^{1/2}_{\mathcal{F}L^1_\omega, 2\lambda + \Lambda} \sup_{x \in K'}|f(x)|^{1/2}  \| \phi\|^{1/2}_{\mathcal{F}L^1_\omega, 2\lambda + \Lambda},
\end{align*}
for some $C=C_{\lambda}>0$.

\end{proof}
Proposition \ref{nullcharweights} enables one to obtain a pointwise characterization of $\ast$-generalized functions, that is, a version of Proposition \ref{pointwise} for the algebra $\mathcal{G}_{\ast}(\Omega)$. We leave the precise formulation of this result and the necessary definitions to the reader.
\subsection{Embedding of $\omega$-ultradistributions} Our strategy to embed $\mathcal{D}'_{\ast}(\Omega)$ into the differential algebra $\mathcal{G}_{\ast}(\Omega)$ is the same as that from Section \ref{section-embedding}, namely, we will first embed $\mathcal{E}_{\ast}(\Omega)$ into $\mathcal{G}_{\ast,c}(\Omega)$, where the latter denotes the ideal of compactly supported sections of $\mathcal{G}_{\ast}$ on $\Omega$. For it, we shall also employ a $(N_p)$-net of mollifiers (cf. Section \ref{section-embedding}). 

This time our assumptions on the weight sequence $N_p$ are $(M.1)$, $(M.3)'$ and the ensuing condition: There is another weight sequence $M_p$ satisfying $(M.1)$, the condition (\ref{eqass3N}), and 
\begin{equation}
\label{eqass4N}
\lim_{t\to\infty}\frac{\omega(t)}{M(kt)}=0, \quad \mbox{for each } k>0.
\end{equation}
In particular, note that we have the continuous and dense embeddings 
$$\mathcal{D}^{(N_p)}(\Omega) \hookrightarrow\mathcal{D}^{(M_p)}(\Omega) \hookrightarrow \mathcal{D}_{\ast}(\Omega).$$
That such a choice of $N_p$ is always possible follows by combining results on majorants by Beurling (cf. remark after \cite[Thrm.\ 1.2.7]{Bjorck}), Cior\v{a}nescu and Zsid\'o \cite[Thrm.\ 1.8]{Cior}, and Roumieu (Lemma \ref{Roum} above).

\begin{proposition}\label{embeddingweight} 
The mapping
\[ \iota_c: \mathcal{E}'_\ast(\Omega) \rightarrow \mathcal{G}_{\ast,c}(\Omega):\quad  f \rightarrow \iota_c(f) = [((f \ast \phi_\varepsilon)_{|\Omega})_\varepsilon] \]
is a support preserving linear embedding, that is,  $\operatorname{supp}\iota_{c}(f) = \operatorname{supp} f$ for all $f \in \mathcal{E}'_\ast(\Omega)$. Moreover,
$\iota_{c|\mathcal{D}_\ast(\Omega)} = \sigma$, where $\sigma$ is the constant embedding $(\ref{eq constant embedding 2})$.
\end{proposition}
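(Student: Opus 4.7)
The plan is to follow the proofs of Propositions \ref{embedding} and \ref{support}, replacing the Gevrey seminorms $\|\cdot\|_{\mathcal{E}^{\{M_p\},h}(K)}$ by the weight-function seminorms $\|\cdot\|_{\mathcal{E}^{\lambda}_{\omega}(K)}$ and exploiting Remark \ref{L2estimate} to move freely between $\mathcal{E}^{\lambda}_{\omega}$- and $\mathcal{F}L^{1}_{\omega}$-norms.

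For moderateness I would fix $K \Subset \Omega$ and a cutoff $\chi \in \mathcal{D}_{(\omega)}(\Omega)$ with $\chi \equiv 1$ near $K$, so that $\|f \ast \phi_{\varepsilon}\|_{\mathcal{E}^{\lambda}_{\omega}(K)} \leq \|\chi(f \ast \phi_{\varepsilon})\|_{\lambda}$. A Fourier-side computation, using continuity of $f \in \mathcal{E}'_{\ast}(\Omega)$ (via the Paley--Wiener growth of $\widehat{f}$) and the support of $\widehat{\phi_{\varepsilon}} = \widehat{\phi}(\varepsilon \cdot)$ in the ball of radius $2/\varepsilon$, would bound the right-hand side by $C \varepsilon^{-d} e^{\mu\omega(2/\varepsilon)}$ with the appropriate quantifier on $\mu$; the subadditivity $\omega(2t) \leq 2\omega(t)$ from $(\alpha)$ then gives moderateness. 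Injectivity is standard: for $\psi \in \mathcal{D}_{\ast}(\Omega)$ one uses $\langle f \ast \phi_{\varepsilon}, \psi \rangle = \langle f, \psi \ast \phi_{\varepsilon} \rangle$ and $\psi \ast \phi_{\varepsilon} \to \psi$ in $\mathcal{D}_{\ast}(\Omega)$. For $\iota_{c|\mathcal{D}_{\ast}(\Omega)} = \sigma$, the identity $\widehat{f \ast \phi_{\varepsilon} - f} = \widehat{f}\,(\widehat{\phi}(\varepsilon \cdot) - 1)$ vanishes on $|\xi| \leq 1/\varepsilon$; combined with the Paley--Wiener decay $|\widehat{f}(\xi)| \leq C_{\nu} e^{-\nu \omega(|\xi|)}$ (for arbitrary $\nu$ in Beurling, some $\nu$ in Roumieu) and Proposition \ref{nullcharweights}, this delivers negligibility of $f \ast \phi_{\varepsilon} - f$.

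The difficult step will be support preservation. The inclusion $\operatorname{supp} f \subseteq \operatorname{supp} \iota_{c}(f)$ follows from injectivity applied on open subsets via the sheaf property of $\mathcal{G}_{\ast}$. For the reverse inclusion I would let $K \Subset \Omega \setminus \operatorname{supp} f$, set $\delta = d(K, \operatorname{supp} f) > 0$, and pick $K' \Subset \Omega$ disjoint from $K$ and containing a neighborhood of $\operatorname{supp} f$. Continuity of $f$ then gives $|(f \ast \phi_{\varepsilon})(x)| \leq C \|\phi_{\varepsilon}(x - \cdot)\|_{\mathcal{E}^{\mu}_{\omega}(K')}$ for $x \in K$; translating to the Gevrey scale via Remark \ref{L2estimate} and the embedding $\mathcal{D}^{(M_{p})}(\Omega) \hookrightarrow \mathcal{D}_{\ast}(\Omega)$ secured by $(\ref{eqass4N})$, and then applying Lemma \ref{PWS} to $\widehat{\phi} \in \mathcal{D}^{(N_{p})}(\mathbb{R}^{d})$ exactly as in the proof of Proposition \ref{support}, one should obtain a bound of the form $C' e^{M(D/(h\varepsilon)) - N(\delta/(\sqrt{d}\, h \varepsilon))}$ for suitable $D,h>0$. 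Hypothesis $(\ref{eqass3N})$ then reduces the exponent to $-M(D'/\varepsilon)$, and a second application of $(\ref{eqass4N})$ converts this into $O(e^{-k\omega(1/\varepsilon)})$ for every (resp.\ some) $k > 0$; Proposition \ref{nullcharweights} concludes $\iota_{c}(f)_{|K} = 0$.

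The hard part is precisely this last conversion: the natural Paley--Wiener estimates for $\phi_{\varepsilon}$ live in the $N_{p}$- and $M_{p}$-scales, whereas negligibility in $\mathcal{G}_{\ast}$ is measured by the weight function $\omega$. The coupling conditions $(\ref{eqass3N})$ and $(\ref{eqass4N})$ on the auxiliary sequences have been calibrated exactly so that the Gevrey-scale decay of $\phi_{\varepsilon}$ can be transferred into $\omega$-scale decay strong enough to overcome the moderateness-side growth, and their joint invocation in the support-preservation argument is what makes the embedding $\iota_{c}$ well-defined in the weight-function setting.
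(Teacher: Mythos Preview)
Your proposal is correct and follows the same overall architecture as the paper's proof: moderateness via a cutoff and the Paley--Wiener--Bj\"orck growth of $\widehat{f}$, injectivity by the standard duality argument, $\iota_{c|\mathcal{D}_\ast} = \sigma$ via the Fourier-side identity $\widehat{f\ast\phi_\varepsilon - f} = \widehat{f}(\widehat{\phi}(\varepsilon\,\cdot)-1)$ together with Proposition~\ref{nullcharweights}, and the easy support inclusion from injectivity plus the sheaf property.

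The one place where you diverge slightly from the paper is the reverse support inclusion $\operatorname{supp}\iota_c(f)\subseteq\operatorname{supp}f$. You propose to redo the estimate of Proposition~\ref{support} in the $\omega$-setting: start from $|(f\ast\phi_\varepsilon)(x)|\le C\|\phi_\varepsilon(x-\cdot)\|_{\mathcal{E}^\mu_\omega(K')}$, translate to the $M_p$-scale via the embedding $\mathcal{D}^{(M_p)}\hookrightarrow\mathcal{D}_\ast$, invoke Lemma~\ref{PWS} and \eqref{eqass3N}, and finally use \eqref{eqass4N} to convert the resulting $e^{-M(c/\varepsilon)}$ decay into $\omega$-scale negligibility. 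The paper instead observes that the dual embedding $\mathcal{E}'_\ast(\Omega)\hookrightarrow{\mathcal{E}^{(M_p)}}'(\Omega)$ lets one apply Proposition~\ref{support} \emph{as a black box} to $f$ viewed as an $(M_p)$-ultradistribution, obtaining the $0$-th order bound $\sup_K|(f\ast\phi_\varepsilon)|=O(e^{-M(c/\varepsilon)})$ directly; then \eqref{eqass4N} and Proposition~\ref{nullcharweights} finish. This is the same mechanism, just packaged more economically: rather than translating the norm and rerunning Lemma~\ref{PWS}, the paper translates the \emph{functional} once and reuses the already-proved estimate. (A small aside: Remark~\ref{L2estimate} is about $L^1/L^2/L^\infty$ equivalence within the $\omega$-scale and plays no role in passing to the $M_p$-scale; the embedding $\mathcal{D}^{(M_p)}\hookrightarrow\mathcal{D}_\ast$ alone, which you also cite, is what carries that step.)
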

\begin{proof}
Let $f \in \mathcal{E}'_\ast(\Omega)$. We first show that $((f \ast \phi_\varepsilon)_{|\Omega})_\varepsilon \in \mathcal{E}_{\ast, \mathcal{M}}(\Omega)$.  Let $K \Subset  \Omega$ and $\lambda > 0$ be arbitrary. Let $\chi \in \mathcal{D}_{(\omega)}(\Omega)$ such that $\chi \equiv 1$ on a neighborhood of $K$. The Paley-Wiener theorem for Beurling-Bj\"orck ultradistributions\footnote{It should be noticed that Bj\"orck only considers the Beurling case; however, his proof can be easily adapted to treat the Roumieu case as well.} \cite[Thrm. 1.8.14]{Bjorck}  implies that there is $k > 0$ (for each  $k > 0$)   $|\widehat{f}(\xi)|\leq Ce^{k\omega(|\xi|)}$ for some $C = C_k > 0$. We have,
\begin{align*}
\| f \ast \phi_\varepsilon\|_{\mathcal{E}^\lambda_\omega(K)} &\leq 
\|\chi (f\ast \phi_{\varepsilon}) \|_{\mathcal{F}L^1_\omega,\lambda}\\
&\leq  \frac{C\|\chi\|_{\mathcal{F}L^1_\omega,\lambda}}{(2\pi)^{d}} \int_{\mathbb{R}^{d}} |\widehat{\phi}(\varepsilon \xi)| e^{(\lambda+k)\omega(|\xi|)}\mathrm{d}\xi\\
&\leq \frac{C\|\chi\|_{\mathcal{F}L^1_\omega,\lambda}\|\widehat{\phi}\|_{L^1}}{(2\pi\varepsilon)^{d}}\:e^{2(\lambda+k) \omega(1/\varepsilon)}, \\
\end{align*}
which in view of condition ($\gamma$) (condition ($\gamma_0$)) shows the assertion. The injectivity of $\iota_c$ is clear. This already yields $\operatorname*{supp}f \subseteq \operatorname*{supp}\iota_{c}( f)$ and thus the mapping has range in $\mathcal{G}_{\ast,c}(\Omega)$. The reverse inclusion $\operatorname*{supp}\iota_{c}(f) \subseteq \operatorname*{supp}f$ actually follows from the continuous embedding $\mathcal{E}'_\ast(\Omega)\to {\mathcal{E}^{(M_p)}}'(\Omega)$, Proposition \ref{support}, the assumption \ref{eqass4N}, and Proposition \ref{nullcharweights}. Finally, let $f\in\mathcal{D}_{\ast}(\Omega)$. For each  $\lambda>0$ (some $\lambda>0$), we have
\begin{align*}
\|f \ast \phi_\varepsilon - f\|_{L^\infty} & \leq \frac{1}{(2\pi)^d}\int_{\R^d} |\widehat{f}(\xi)||\widehat{\phi}(\varepsilon \xi) - 1| \dxi \\
& \leq \frac{\|f\|_{\mathcal{F}L^\infty_\omega,\lambda +\lambda'}}{(2\pi)^d}(1+ \|\widehat{\phi}\|_{L^\infty}) e^{-\lambda\omega(1/\varepsilon)} \int_{\R^d} e^{-\lambda'\omega(|\xi|)}  \dxi, \\
\end{align*}
where in the Beurling case $\lambda'=\Lambda$ (cf. Remark \ref{L2estimate}) and in the Roumieu case $\lambda'=\lambda$. Proposition \ref{nullcharweights} and condition $(\gamma)$ (condition $(\gamma_0)$) imply that $\iota_{c}(f)=\sigma(f)$. 
\end{proof}

As in Section \ref{section-embedding}, Proposition \ref{embeddingweight} and the fact that $\mathcal{D}'_{\ast}$ and $\mathcal{G}_{\ast}$ are fine sheaves allows one to automatically extend the embedding $\iota_{c}:\mathcal{E}'_{\ast}(\Omega)\to \mathcal{G}_{\ast,c}(\Omega)$ to a unique sheaf monomorphism:
\begin{theorem}\label{mainweights}
There exists a linear embedding  $\iota = \iota_\Omega: \mathcal{D}'_\ast(\Omega) \rightarrow \mathcal{G}_\ast(\Omega)$ satisfying:
\begin{enumerate}
\item[$(i)$] $\iota_{|\mathcal{E}'_\ast(\Omega) } = \iota_c.$
\item[$(ii)$] $\iota$ commutes with $\partial^{\alpha}$ for each $\alpha\in\mathbb{N}^{d}$, namely,
\[ \partial^{\alpha}\iota(f) = \iota(\partial^{\alpha} f), \quad f \in \mathcal{D}'_\ast(\Omega).\]
\item[$(iii)$] $\iota_{|\mathcal{E}'_\ast(\Omega)}$ coincides with the constant embedding. Consequently, 
$$
\iota(fg) = \iota(f)\iota(g), \quad f,g \in \mathcal{E}_\ast(\Omega).
$$ 
\end{enumerate}
Furthermore, the entirety of all $\iota_\Omega: \mathcal{D}'_\ast(\Omega) \rightarrow \mathcal{G}_\ast(\Omega)$ is a sheaf monomorphism $\mathcal{D}'_\ast \rightarrow \mathcal{G}_\ast$ on any open subset of $\mathbb{R}^{d}$.
\end{theorem}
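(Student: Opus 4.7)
The plan is to derive Theorem \ref{mainweights} from Proposition \ref{embeddingweight} by invoking the standard sheaf extension principle already used to establish Theorem \ref{main} in the weight sequence setting. Since $\mathcal{D}'_\ast$ and $\mathcal{G}_\ast$ are fine, hence soft, sheaves of vector spaces on the paracompact Hausdorff second countable space $\Omega$, and since Proposition \ref{embeddingweight} furnishes a support-preserving linear embedding $\iota_c : \mathcal{E}'_\ast(\Omega) \to \mathcal{G}_{\ast,c}(\Omega)$ of the compactly supported global sections, the abstract result \cite[Lemm.\ 2.3, p.\ 228]{Komatsu73b} guarantees the existence and uniqueness of a sheaf monomorphism $\iota : \mathcal{D}'_\ast \to \mathcal{G}_\ast$ whose restriction to $\mathcal{E}'_\ast(\Omega)$ equals $\iota_c$. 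This one stroke gives property $(i)$ and the final sheaf-monomorphism assertion.

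For the concrete construction, I would proceed as follows: given $f \in \mathcal{D}'_\ast(\Omega)$ and a relatively compact open $\Omega' \Subset \Omega$, pick a cutoff $\chi \in \mathcal{D}_\ast(\Omega)$ with $\chi \equiv 1$ on a neighborhood of $\overline{\Omega'}$, and set
\[ \iota(f)_{|\Omega'} := \iota_c(\chi f)_{|\Omega'}. \]
Independence of $\chi$ is exactly where the support-preservation property of Proposition \ref{embeddingweight} is used: if $\chi_1, \chi_2$ both equal $1$ near $\overline{\Omega'}$, then $\operatorname{supp}((\chi_1 - \chi_2) f) \subseteq \Omega \setminus \overline{\Omega'}$, so $\iota_c((\chi_1 - \chi_2)f)$ vanishes on $\Omega'$. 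The same argument yields compatibility under restriction to smaller opens, and the sheaf property of $\mathcal{G}_\ast$ (Proposition \ref{sheaf} in the weight-function guise) glues these local choices into a global $\iota(f) \in \mathcal{G}_\ast(\Omega)$. Injectivity of $\iota$ reduces locally to injectivity of $\iota_c$: if $\iota(f) = 0$, then $\iota_c(\chi f) = 0$ for every admissible $\chi$, so $\chi f = 0$ and thus $f = 0$.

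To establish $(ii)$, I would exploit that on $\Omega'$ with $\chi \equiv 1$ nearby one has $\chi \partial^\alpha f = \partial^\alpha(\chi f)$, while at the representative level $\partial^\alpha(g \ast \phi_\varepsilon) = (\partial^\alpha g) \ast \phi_\varepsilon$ for any $g \in \mathcal{E}'_\ast(\Omega)$. Hence
\[ \partial^\alpha \iota(f)_{|\Omega'} = \partial^\alpha \iota_c(\chi f)_{|\Omega'} = \iota_c(\partial^\alpha(\chi f))_{|\Omega'} = \iota_c(\chi \partial^\alpha f)_{|\Omega'} = \iota(\partial^\alpha f)_{|\Omega'}, \]
and the result globalizes by the sheaf property. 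For $(iii)$, given $f, g \in \mathcal{E}_\ast(\Omega)$ and a cutoff $\chi$ with $\chi \equiv 1$ near $\overline{\Omega'}$, both $\chi f$ and $\chi g$ lie in $\mathcal{D}_\ast(\Omega)$, so the last assertion of Proposition \ref{embeddingweight} yields $\iota(\chi f)_{|\Omega'} = \sigma(\chi f)_{|\Omega'} = \sigma(f)_{|\Omega'}$, and similarly for $g$ and for $fg$ (since $\chi^2 \equiv 1$ near $\overline{\Omega'}$ as well); consequently $\iota(fg)_{|\Omega'} = \sigma(fg)_{|\Omega'} = \sigma(f)_{|\Omega'} \sigma(g)_{|\Omega'} = \iota(f)\iota(g)_{|\Omega'}$.

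The main obstacle in this whole argument is precisely the support-preservation of $\iota_c$, since the entire sheaf-theoretic extension machinery rests on it; but that was already dealt with in Proposition \ref{embeddingweight} (via the continuous inclusion $\mathcal{E}'_\ast(\Omega) \hookrightarrow (\mathcal{E}^{(M_p)})'(\Omega)$, Proposition \ref{support}, the growth condition \eqref{eqass4N}, and the null characterization Proposition \ref{nullcharweights}). What remains here is therefore essentially bookkeeping, and no further analytical input beyond what has been assembled in the preceding propositions is required.
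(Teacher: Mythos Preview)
Your proposal is correct and follows essentially the same approach as the paper: the paper states the theorem with only the one-sentence justification that Proposition \ref{embeddingweight} together with the fineness (hence softness) of the sheaves $\mathcal{D}'_\ast$ and $\mathcal{G}_\ast$ allows one to extend $\iota_c$ to a sheaf monomorphism, invoking precisely the same abstract principle \cite[Lemm.\ 2.3, p.\ 228]{Komatsu73b} used for Theorem \ref{main}. You have in fact supplied more detail than the paper does, making explicit the cutoff construction and the local verifications of $(ii)$ and $(iii)$ that the paper leaves implicit.
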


We point out that the technique employed at the end of the proof of Proposition \ref{embeddingweight} also yields the following version of Corollary \ref{coro}.
\begin{lemma}\label{coroweights} 
Let $f \in \mathcal{E}'_{\ast}(\Omega)$, $(f_\varepsilon)_\varepsilon$ a representative of $\iota_c(f)$ and $K \Subset \Omega$. Then
\[(\exists \lambda > 0)(\exists C > 0)( \exists \varepsilon_0 > 0)(\forall \psi \in \mathcal{D}_{(\omega)}(K))( \forall \varepsilon < \varepsilon_0)\]
\[| \langle f -f_\varepsilon, \psi \rangle | \leq C\| \psi\|_{\mathcal{F}L^\infty_\omega, 2\lambda}\: e^{-\lambda\omega(1/\varepsilon)}.\]
\end{lemma}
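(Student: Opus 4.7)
I would follow the same strategy as in Corollary \ref{coro}, adapted to the Beurling-Bj\"orck setting by replacing the $\mathcal{E}^{\{M_p\},h}(K)$-norms with Fourier-Lebesgue norms. The starting point is the identity
\[\langle f-f_\varepsilon,\psi\rangle \;=\; \langle f,\psi-\phi_\varepsilon*\psi\rangle,\]
which follows from $f_\varepsilon=(f*\phi_\varepsilon)|_{\Omega}$, the evenness of $\phi$, and Fubini. Since $\psi-\phi_\varepsilon*\psi$ is not compactly supported, I fix a cutoff $\chi\in\mathcal{D}_{(\omega)}(\Omega)$ with $\chi\equiv 1$ on a neighbourhood of some compact $K_0$ containing $\operatorname{supp} f$, so that $\chi(\psi-\phi_\varepsilon*\psi)\in\mathcal{D}_{\ast}(\Omega)$ and the pairing is unchanged.

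Next I invoke continuity of $f\in\mathcal{E}'_{\ast}(\Omega)$: there exist $\mu_0>0$ and $C_f>0$ (in the Beurling case $\mu_0$ is forced by $f$; in the Roumieu case any $\mu_0>0$ works) such that $|\langle f,\eta\rangle|\le C_f\|\eta\|_{\mathcal{E}^{\mu_0}_{\omega}(K_0)}$ for every $\eta\in\mathcal{E}_{\ast}(\Omega)$. For $\eta\in\mathcal{D}_{\ast}$ supported in $K_0$, this seminorm is dominated by $\|\eta\|_{\mathcal{F}L^1_\omega,\mu_0}$ directly from the definition of the quotient norm. Combining this with the submultiplicativity (up to a constant) of the $\mathcal{F}L^1_\omega$-norms, a direct consequence of the subadditivity $(\alpha)$ of $\omega$, reduces the proof to the bound
\[|\langle f-f_\varepsilon,\psi\rangle|\;\le\; C\,\|\psi-\phi_\varepsilon*\psi\|_{\mathcal{F}L^1_\omega,\mu_0},\]
where the constant $C$ absorbs $C_f$ and $\|\chi\|_{\mathcal{F}L^1_\omega,\mu_0}$.

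The core computation is then the estimate of $\|\psi-\phi_\varepsilon*\psi\|_{\mathcal{F}L^1_\omega,\mu_0}$, which is directly parallel to the one performed at the end of the proof of Proposition \ref{embeddingweight}. Since $\widehat{\phi}(\varepsilon\xi)=1$ for $|\xi|\le 1/\varepsilon$, the integrand vanishes on the ball $\{|\xi|\le 1/\varepsilon\}$, and on its complement I bound
\[|\widehat{\psi}(\xi)|\,|1-\widehat{\phi}(\varepsilon\xi)|\,e^{\mu_0\omega(|\xi|)} \;\le\; (1+\|\widehat{\phi}\|_{L^\infty})\,\|\psi\|_{\mathcal{F}L^\infty_\omega,2\lambda}\,e^{(\mu_0-2\lambda)\omega(|\xi|)}.\]
Splitting the surplus exponent $2\lambda-\mu_0=\lambda+(\lambda-\mu_0)$, the monotonicity of $\omega$ on $\{|\xi|>1/\varepsilon\}$ extracts the factor $e^{-\lambda\omega(1/\varepsilon)}$, and what remains is a tail integral $\int e^{-(\lambda-\mu_0)\omega(|\xi|)}\,d\xi$, finite provided $\lambda-\mu_0$ is large enough (by $(\gamma)$ in the Beurling case, taking for instance $\lambda=\mu_0+\Lambda$ with $\Lambda$ as in Remark \ref{L2estimate}, and by $(\gamma_0)$ in the Roumieu case). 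The principal bookkeeping subtlety is the choice of the single parameter $\lambda$ so that the conclusion displays exactly the norm $\|\psi\|_{\mathcal{F}L^\infty_\omega,2\lambda}$ paired with the decay rate $\lambda$; this forces the specific relation between $\lambda$, the continuity exponent $\mu_0$, and the integrability threshold, but no new idea beyond those already appearing in Remark \ref{L2estimate} and in the proof of Proposition \ref{embeddingweight} is required.
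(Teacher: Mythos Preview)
Your proposal is correct and follows essentially the same approach the paper indicates: it merely says that ``the technique employed at the end of the proof of Proposition~\ref{embeddingweight} also yields'' the lemma, and what you outline is precisely that technique---the Fourier-side estimate on $\psi-\phi_\varepsilon\ast\psi$ using $\widehat{\phi}(\varepsilon\xi)=1$ for $|\xi|\le 1/\varepsilon$---combined with the continuity of $f$, exactly in the spirit of Corollary~\ref{coro}. One minor point: since $(f_\varepsilon)_\varepsilon$ is an arbitrary representative of $\iota_c(f)$, not necessarily $(f\ast\phi_\varepsilon)_{|\Omega}$ itself, you should note (as the paper does implicitly in Corollary~\ref{coro}) that the negligible difference is absorbed by the definition of $\mathcal{E}_{\ast,\mathcal{N}}(\Omega)$ together with the trivial bound $\|\psi\|_{L^\infty}\le C\|\psi\|_{\mathcal{F}L^\infty_\omega,2\lambda}$.
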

Finally, the next remark discusses the optimality of our embedding in some important cases.
\begin{remark}
If the function $\omega(e^t)$ is convex -- a crucial assumption in the Braun-Meise-Taylor approach to $\omega$-ultradistributions \cite{b-m-t}-- one can define the notion of an ultradifferentiable operator of class $(\omega)$ or $\{\omega\}$  and show an analogue of Komatsu's second structure theorem (as stated in \cite{Taki}) for $\omega$-ultradistributions \cite{Braun}. In such a case, it is therefore possible to state and prove a Schwartz' impossibility type result for $\omega$-ultradistributions (cf. Section \ref{imposs section}). Moreover, every ultradifferentiable operator of class $\ast$ canonically defines a linear operator on $\mathcal{G}_\ast(\Omega)$ and one can show that the embedding constructed below is again optimal in view of the impossibility result.
\end{remark} 

\subsection{Microlocal analysis in $\mathcal{G}_\ast(\Omega)$} The corresponding subalgebra  $\mathcal{G}^\infty_\ast(\Omega)$ of \emph{regular} $\ast$-generalized functions is the factor algebra
\[ \mathcal{G}^\infty_\ast(\Omega) =\mathcal{E}^\infty_{\ast,\mathcal{M}}(\Omega)/\mathcal{E}_{\ast,\mathcal{N}} (\Omega) ,\]
where
\begin{align*}
\mathcal{E}^\infty_{(\omega), \mathcal{M}}(\Omega) = \{ (f_\varepsilon)_\varepsilon \in \mathcal{E}_{(\omega)}(\Omega)^{(0,1]} \, : \, &(\forall K \Subset \Omega) (\exists k > 0)(\forall \lambda > 0)  \\
 &\|f_\varepsilon\|_{\mathcal{E}^\lambda_{\omega}(K)}  = O(e^{k\omega(1/\varepsilon)}) \},
\end{align*}
and
\begin{align*}
\mathcal{E}^\infty_{\{\omega\}, \mathcal{M}}(\Omega) = \{ (f_\varepsilon)_\varepsilon \in \mathcal{E}_{\{\omega\}}(\Omega)^{(0,1]} \, : \, &(\forall K \Subset \Omega) (\exists \lambda > 0) (\forall k > 0) \\
 &\|f_\varepsilon\|_{\mathcal{E}^\lambda_{\omega}(K)}  = O(e^{k\omega(1/\varepsilon)}) \}.
\end{align*}
Clearly, $\mathcal{G}_{\ast}^{\infty}$ is a fine and supple (but non-flabby) subsheaf of $\mathcal{G}_{\ast}$.

In view of Lemma \ref{coroweights} and the Paley-Wiener-Bj\"orck theorem \cite{Bjorck}, our argument used in the proof of Theorem \ref{mainreg} can be easily modified to obtain the \emph{regularity theorem}
\begin{equation}
\label{eq regularity 2}
\mathcal{G}^\infty_\ast(\Omega) \cap \iota(\mathcal{D}'_\ast(\Omega)) = \iota(\mathcal{E}_\ast(\Omega)). 
\end{equation}
The definitions of the $\ast$-singular support and the generalized one, denoted as $\operatorname*{sing\: supp}_{\ast} $ and $\operatorname*{sing \:supp}_{g,\ast} $, with respect $\mathcal{E}_{\ast}$- and $\mathcal{G}_{\ast}^{\infty}$-regularity should be clear. As a corollary of (\ref{eq regularity 2}), one obtains
$$
\operatorname{sing\: supp}_{g,\ast}\iota(f)= \operatorname{sing \:supp}_{\ast} f, \quad \forall f\in{\mathcal{D}}'_{\ast}(\Omega).
$$

Based upon the notion of $\mathcal{G}_\ast^\infty$-regularity, we now define the wave front set of a $\ast$-generalized function. We begin by recalling the case of ultradistributions. For $f \in \mathcal{D}'_\ast(\Omega)$, the wave front set  $WF_\ast(f)$  is defined \cite{d-v,f-g-j} as the complement in $\Omega \times ( \R^d  \backslash \{ 0 \})$ of the set all $(x_0, \xi_0) $ for which there are an open conic neighborhood $\Gamma$ of $\xi_0$ and $\psi \in \mathcal{D}_\ast(\Omega)$, with $\psi \equiv 1$ in a neighborhood of $x_0$, such that for every $\lambda > 0$ (for some $ \lambda >0$)
\[  \sup_{\xi \in \Gamma} |\widehat{\psi f}(\xi)| e^{\lambda\omega(|\xi|)} < \infty.  \]
Note that, equivalently, we may simply ask $\psi(x_0)\neq 0$, because, due to the Beurling theorem \cite{beurling}, $\mathcal{E}_{\ast}(\Omega)$ is inverse closed.

Let now $f = [(f_\varepsilon)_\varepsilon] \in \mathcal{G}_\ast(\Omega)$. The wave front $WF_{g,\ast}(f)$ is defined as the complement in $\Omega \times ( \R^d \backslash \{ 0 \})$ of the set pairs $(x_0, \xi_0) $ for which there are an open conic neighborhood $\Gamma$ of $\xi_0$ and $\psi \in \mathcal{D}_\ast(\Omega)$ with $\psi(x_0) \neq 1$  such that
\[( \exists k > 0)(\forall \lambda > 0) \qquad  (( \exists \lambda > 0)(\forall k > 0))    \]
\[ \sup_{\xi \in \Gamma}|\widehat{\psi f}_\varepsilon(\xi)|e^{\lambda \omega(|\xi|)} = O(e^{k\omega(1/\varepsilon)}). \]
This definition is clearly independent of the representative of $f$.  Versions of all of our results from Section \ref{section wavefronts} remain valid for the generalized wave front set $WF_{g,\ast}$ with respect to $\ast=(\omega)$ or$ \{\omega\}$. In particular, 
$$
 WF_\ast(f) = WF_{g,\ast}(\iota(f)), \qquad \forall f \in \mathcal{D}'_\ast(\Omega). 
$$
Once again we omit details and leave to the reader the formulations and proofs of the other results from Section \ref{section wavefronts}.


\begin{thebibliography}{99}

\bibitem{a-j-c} J.~Aragona, S.~O.~ Juriaans, J.-F.~Colombeau, \emph{Locally convex topological algebras of generalized functions: compactness and nuclearity in a nonlinear context,} Trans. Amer. Math. Soc. \textbf{367} (2015), 5399--5414. 

\bibitem{Bengel-Schapira}G.~Bengel, P.~Schapira, \emph{D\'{e}composition microlocale analytique des distributions,} Ann. Inst. Fourier (Grenoble) \textbf{29} (1979), 101--124.

\bibitem{Benmeriem2} K.~Benmeriem, C.~Bouzar, \emph{Generalized Gevrey ultradistributions}, New York J. Math. \textbf{15} (2009) 37--72.

\bibitem{Benmeriem3} K.~Benmeriem, C.~Bouzar, \emph{An algebra of generalized Roumieu ultradistributions}, Rend. Sem. Mat. Univ. Politec. Torino \textbf{70} (2012), 101--109.

\bibitem{beurling} A.~Beurling, \emph{Sur les int\'{e}grales de Fourier absolument convergentes et leur application \`{a} une transformation fonctionelle}, in: IX Congr. Math. Scand., pp. 345--366, Helsingfors, 1938.

\bibitem{Bjorck} G.~Bj\"orck, \emph{Linear partial differential operators and generalized distributions}, Ark. Mat. \textbf{6} (1966), 351--407.

\bibitem{Braun} R.~W.~Braun, \emph{An extension of Komatsu's second structure theorem for ultradistributions}, J. Fac. Sci. Univ. Tokyo Sect. IA Math. \textbf{40} (1993), 411--417.

\bibitem{b-m-t} R.~W.~Braun, R.~Meise, B.~A.~Taylor, \emph{Ultradifferentiable functions and Fourier analysis,} Result. Math. \textbf{17} (1990), 206--237.

\bibitem{Carmichael} R.~D.~Carmichael, A.~Kami\'{n}ski, S.~Pilipovi\'{c}, \emph{Boundary values and convolution in ultradistribution spaces}, Series on Analysis, Applications and Computation, 1, World Scientific Publishing Co. Pte. Ltd., Hackensack, NJ, 2007.

\bibitem{Chen} W.~Chen, Z.~Ditzian, \emph{Mixed and directional derivatives}, Proc. Amer. Math. Soc. \textbf{108} (1990), 177--185.

\bibitem{Cho} J.~Cho, K.~W.~Kim, \emph{Real version of Paley-Wiener-Schwartz theorem for ultradistributions with ultradifferentiable singular support}, Bull. Korean. Math. Soc. \textbf{36} (1999), 483--493.

\bibitem{Cior} I.~Cior\v{a}nescu, L.~Zsid\'o, \emph{$\omega$-ultradistributions and their applications to operator theory}, in: Spectral theory (Warsaw, 1977), pp. 77--220. Banach Center Publ. 8, PWN, Warsaw, 1982.

\bibitem{Colombeau84} J.-F.~Colombeau, \emph{New generalized functions and multiplication of distributions}, North-Holland Publishing Co., Amsterdam, 1984.

\bibitem{Colombeau85}J.-F.~Colombeau, \emph{Elementary introduction to new generalized functions}, North-Holland Publishing Co, Amsterdam, 1985.

\bibitem{Dapic} N.~Dapi\'{c}, S.~Pilipovi\'{c}, D.~Scarpal\'{e}zos, \emph{Microlocal analysis of Colombeau's generalized functions: propagation of singularities}, J. Anal. Math. \textbf{75} (1998), 51--66.

\bibitem{d-v} A.~Debrouwere, J.~Vindas, \emph{Discrete characterizations of wave front sets of Fourier-Lebesgue and quasianalytic type,} J. Math. Anal. Appl. \textbf{438} (2016), 889--908.

\bibitem{Delcroix2004} A.~Delcroix, M.~F.~Hasler, S.~Pilipovi\'{c}, V.~Valmorin, \emph{Embeddings of ultradistributions and periodic hyperfunctions in Colombeau type algebras through sequence spaces,} Math. Proc. Cambridge Philos. Soc. \textbf{137} (2004), 697--708.

\bibitem{Delcroix} A.~Delcroix, M.~F.~Hasler, S.~Pilipovi\'{c}, V.~Valmorin, \emph{Sequence spaces with exponent weights. Realizations of Colombeau type algebras}, Dissertationes Math. \textbf{447} (2007), 56 pp.

\bibitem{f-g-j} C. Fern\'{a}ndez, A. Galbis, D. Jornet, \emph{Pseudodifferential operators of Beurling type and the wave front set,} J. Math. Anal. Appl. \textbf{340} (2008), 1153--1170.

\bibitem{garetto2015} C.~Garetto, \emph{On hyperbolic equations and systems with non-regular time dependent coefficients,}  J. Differential Equations \textbf{259} (2015), 5846--5874. 

\bibitem{garetto-r2015} C.~Garetto, M.~Ruzhansky, \emph{Hyperbolic second order equations with non-regular time dependent coefficients,} Arch. Ration. Mech. Anal. \textbf{217} (2015), 113--154. 

\bibitem{garetto-vernaeve} C.~Garetto, H.~Vernaeve, \emph{Hilbert $\tilde{\mathbb{C}}$-modules: structural properties and applications to variational problems,} Trans. Amer. Math. Soc. \textbf{363} (2011), 2047--2090.

\bibitem{Gramchev} T.~Gramchev, \emph{Nonlinear maps in spaces of distributions}, Math. Z. \textbf{209} (1992), 101--114.

\bibitem{GGK} M.~Grosser, M.~Kunzinger, M.~Oberguggenberger, R.~Steinbauer, \emph{Geometric theory of generalized functions}, Kluwer Academic Publishers, Dordrecht, 2001.

\bibitem{Hormander} L.~H\"{o}rmander, \emph{The analysis of linear partial differential operators. I. Distribution theory and Fourier analysis}, Second edition, Grundlehren der Mathematischen Wissenschaften 256, Springer-Verlag, Berlin, 1990.

\bibitem{hormann2004}G.~H\"{o}rmann, \emph{H\"{o}lder-Zygmund regularity in algebras of generalized functions,} Z. Anal. Anwendungen \textbf{23} (2004),139--165.

\bibitem{Hormann-DeHoop} G.~H\"{o}rmann, M.~V.~de Hoop, \emph{Microlocal analysis and global solutions of some hyperbolic equations with discontinuous coefficients}, Acta Appl. Math. \textbf{67} (2001),173--224.

\bibitem{Hormann-Ober-Pilipovic} G.~H\"ormann, M.~Oberguggenberger, S.~Pilipovi\'{c}, \emph{Microlocal hypoellipticity of linear partial differential operators with generalized functions as coefficients}, Trans. Amer. Math. Soc. \textbf{358} (2006), 3363--3383.

\bibitem{Kolmogorov}A.~N.~Kolmogorov, \emph{On inequalities between the upper bounds of successive derivatives of an arbitrary function on an infinite interval}, (translation) in: Amer. Math. Soc. Translations, Ser. 1, Vol. 2: Number theory and analysis, pp. 233--243, A.M.S., Providence, R.I., 1962. 

\bibitem{Komatsu} H.~Komatsu, \emph{Ultradistributions I. Structure theorems and a characterization}, J. Fac. Sci. Univ. Tokyo Sect. IA Math. \textbf{20} (1973), 25--105. 

\bibitem{Komatsu73b}  H.~Komatsu, \emph{Relative cohomology of sheaves of solutions of differential equations,} in: Hyperfunctions and pseudo-differential equations, pp. 192--261, Lecture Notes in Math. 287, Springer, Berlin, 1973.

\bibitem{Komatsu3} H.~Komatsu,\emph{Ultradistributions. III. Vector-valued ultradistributions and the theory of kernels}, J. Fac. Sci. Univ. Tokyo Sect. IA Math. \textbf{29} (1982), 653--717.

\bibitem{Komatsumla} H.~Komatsu, \emph{Microlocal analysis in Gevrey classes and in complex domains,} in: Microlocal analysis and applications, pp. 161--236, Lecture Notes in Math. 1495, Springer, Berlin, 1991. 

\bibitem{Kunzinger-Steinbauer} M.~Kunzinger, R.~Steinbauer, \emph{Generalized pseudo-Riemannian geometry}, Trans. Amer. Math. Soc. \textbf{354} (2002), 4179--4199.


\bibitem{Nedel} M.~Nedeljkov, S.~Pilipovi\'c, D.~Scarpal\'ezos, \emph{The linear theory of Colombeau generalized functions}, Pitman Research Notes in Mathematics Series 385, Longman, Harlow, 1998.

\bibitem{Ober} M.~Oberguggenberger, \emph{Multiplication of distributions and applications to partial differential equations}, Pitman Research Notes in Mathematics 259, Longman Scientific  $\&$ Technical, 1992.

\bibitem{Ober-Pil-Scarp} M.~Oberguggenberger, S.~Pilipovi\'{c}, D. Scarpal\'{e}zos, \emph{Local properties of Colombeau generalized functions,} Math. Nachr. \textbf{256} (2003), 88--99. 

\bibitem{Pil94} S.~Pilipovi\'{c}, \emph{Characterization of bounded sets in spaces of ultradistributions}, Proc. Amer. Math. Soc. \textbf{120} (1994), 1191--1206.

\bibitem{Pilipovic} S.~Pilipovi\'{c}, \emph{Microlocal analysis of ultradistributions}, Proc. Amer. Math. Soc. \textbf{126} (1998), 105--113.

\bibitem{PilScar} S.~Pilipovi\'{c}, D.~Scarpal\'ezos, \emph{Colombeau generalized ultradistributions}, Math. Proc. Camb. Phil. Soc. \textbf{130} (2001), 541--553.

\bibitem{PilScar2001b} S.~Pilipovi\'{c}, D.~Scarpalezos, \emph{Regularity properties of distributions and ultradistributions,} Proc. Amer. Math. Soc. \textbf{129} (2001), 3531--3537.

\bibitem{p-s-v2013a} S.~Pilipovi\'{c}, D.~Scarpal\'{e}zos, J.~Vindas, \emph{Regularity properties of distributions through sequences of functions,} Monatsh. Math. \textbf{170} (2013), 227--237.

\bibitem{Roumieu} C.~Roumieu, \emph{Sur quelques extensions de la notion de distribution}, Ann. Sci. \'{E}cole Norm. Sup. S\'er 3 \textbf{77} (1960), 41--121.

\bibitem{Schwartz} L.~Schwartz, \emph{Sur l'impossibilit\'{e} de la multiplication des distributions}, C. R. Acad. Sci. Paris \textbf{239} (1954), 847--848.

\bibitem{Taki} T.~Takiguchi, \emph{Structure of quasi-analytic ultradistributions}, Publ. Res. Inst. Math. Sci. \textbf{43} (2007), 425--442.
\end{thebibliography}
\end{document}